\newtheorem{theorem}{Theorem}[section]
\newtheorem{corollary}[theorem]{Corollary}
\newtheorem{definition}[theorem]{Definition}
\newtheorem{example}[theorem]{Example}
\newtheorem{lemma}[theorem]{Lemma}
\newtheorem{proposition}[theorem]{Proposition}
\numberwithin{equation}{section}
\theoremstyle{remark}
\newtheorem{remark}[theorem]{Remark}
\newenvironment{customthm}[1]
  {\innercustomthm}
  {\endinnercustomthm}
\newcommand{\cM}{\mathcal{M}}
\newcommand{\N}{\mathbb{N}}
\newcommand{\Z}{\mathbb{Z}}
\newcommand{\Q}{\mathbb{Q}}
\newcommand{\C}{\mathbb{C}}
\newcommand{\K}{\mathcal K}
\newcommand{\Hy}{\mathscr{H}}
\newcommand{\HB}{\mathscr H_{B_d}}
\newcommand{\End}{\text{End}\ }
\newcommand{\bD}{\overline{\Delta}}
\newcommand{\D}{\DH}
\newcommand{\inv}{\text{inv}}
\newcommand{\M}{\mathbb M}
\newcommand{\V}{\mathbb V}
\newcommand{\la}{\langle}
\newcommand{\ra}{\rangle}
\newcommand{\io}{\imath}
\newcommand{\bu}{\bullet}
\newcommand{\U}{\mathbf U}
\newcommand{\Ui}{\mathbf U^\imath}
\newcommand{\va}{\varsigma}
\newcommand{\sch}{\mathcal S}
\newcommand{\iba}{\psi_{\io}}
\newcommand{\T}{T}
\newcommand{\pt}{n}
\newcommand{\nb}{m}
\newcommand{\up}{\Upsilon}
\newcommand{\I}{\mathbb I}
\newcommand{\Ibw}{\I_{r|m|r}}
\newcommand{\Ianti}{\I_{r|m|r}^{d,-}}
\newcommand{\Iwl}{\mathbb I_{\circ}^{-}}
\newcommand{\Iwr}{\mathbb I_{\circ}^{+}}
\newcommand{\Ib}{\mathbb I_{\bu}}
\newcommand{\qbinom}[2]{\begin{bmatrix} #1\\#2 \end{bmatrix} }
\begin{document}
\title{$\imath$Schur duality and Kazhdan-Lusztig basis expanded}
\author{Yaolong Shen and Weiqiang Wang}
\address{Department of Mathematics, University of Virginia, Charlottesville, VA 22904}
\email{ys8pfr@virginia.edu (Shen), ww9c@virginia.edu (Wang)}

\subjclass[2010]{Primary 20C08, 17B37}  
\keywords{Hecke algebras, quantum symmetric pairs, canonical basis}

\begin{abstract}
Expanding the classical works of Kazhdan-Lusztig and Deodhar, we establish bar involutions and canonical (i.e., quasi-parabolic KL) bases on quasi-permutation modules over the type B Hecke algebra, where the bases are parameterized by cosets of (possibly non-parabolic) reflection subgroups of the Weyl group of type B. We formulate an $\imath$Schur duality between an $\imath$quantum group of type AIII (allowing black nodes in its Satake diagram) and a Hecke algebra of type B acting on a tensor space, providing a common generalization of Jimbo-Schur duality and Bao-Wang's quasi-split $\imath$Schur duality. The quasi-parabolic KL bases on quasi-permutation Hecke modules are shown to match with the $\imath$canonical basis on the tensor space. An inversion formula for quasi-parabolic KL polynomials is established via the $\imath$Schur duality. 
\end{abstract}

\maketitle
 \setcounter{tocdepth}{1}

\tableofcontents

\section{Introduction}

\subsection{Type B Kazhdan-Lusztig, expanded}

Let $W =W_d$ be the Weyl group of type $B_d$ generated by the simple reflections $s_0, s_1, \ldots, s_{d-1}$, which contains the symmetric group $S_d$ naturally as a subgroup. Let $\HB$ be its associated Hecke algebra generated by $H_0, H_1, \ldots, H_{d-1}$ in 2 parameters $q,p$, which contains the Hecke algebra $\Hy_{S_d}$ as a subalgebra. (In the introduction, we shall assume that $p$ is an integer power of $q$; a reader can take $p=q$.)

Consider reflection subgroups of $W_d$ of the form 
\begin{align}
  \label{eq:Wf1}
W_f =W_{{m_1}}\times \ldots \times W_{{m_k}}\times S_{m_{k+1}}\times \ldots\times S_{m_l}. \end{align} 
where $m_1+\ldots+m_l=d$, $k\le l$ and all $m_i$ are positive. Clearly, $W_f$ is a parabolic subgroup of $W_d$ if and only if $k \le 1$.  For $k\le 1$,  there exists a right $\HB$-module $\M_f$, the induced module from the trivial module of the subalgebra $\Hy_{W_f}$, parameterized by the set ${}^fW$ of right minimal length representatives of $W_f$. The celebrated Kazhdan-Lusztig (KL) basis on the regular representation of $\HB$ (see \cite{KL79} for $p=q$, and \cite{Lus03} for $p\in q^\Z$) admits a parabolic generalization in terms of $\M_f$ (see Deodhar  \cite{De87}); that is, $\M_f$ admits a bar involution and a distinguished bar-invariant basis, known as the parabolic KL basis.

Our first main result is to extend the above classical works of Kazhdan, Lusztig and Deodhar to construct canonical bases (also called quasi-parabolic KL bases)  of type B associated to arbitrary reflection subgroups $W_f$ of the form \eqref{eq:Wf1}.  
By definition, our modules $\M_f$ depend only on the reflection subgroup $W_f$ of $W_d$, and each $\M_f$ comes with a standard basis $\{M_{f\cdot \sigma} \}$, where $\sigma$ runs over the set ${}^fW$ of minimal length representatives of right cosets of $W_f$ in $W_d$. We denote by $<$ the Chevalley-Bruhat order on ${}^fW$. 

\begin{customthm} {\bf A}
[Proposition~\ref{prop:iHba}, Theorem~\ref{thm:CBMf}]
\label{thm:A}
(1) There exists an anti-linear bar involution $\iba$ on $\M_f$ such that $\iba(M_f) =M_f$, which is compatible with the bar operator on $\HB$, i.e., 
$\iba(x h)=\iba(x) \bar{h},$ for all $x \in \M_f, \  h \in \HB$.

(2) The module $\M_f$ admits a canonical basis $\{C_{\sigma} | \sigma \in {}^fW \}$ such that $C_{ \sigma}$ is bar invariant and  
$C_{\sigma} \in  M_{f\cdot \sigma}+\sum_{w\in {}^fW, w<\sigma} 
q^{-1}\Z[q^{-1}] M_{f\cdot w}.$
\end{customthm}

The module $\M_f$ admits a dual canonical basis $\{C^*_{\sigma} | \sigma \in {}^fW \}$ such that $C^*_{ \sigma}$ is bar invariant and  
$C^*_{\sigma} \in  M_{f\cdot \sigma}+\sum_{w\in {}^fW, w<\sigma} 
q \Z[q] M_{f\cdot w}$; see Proposition~\ref{prop:dualCBMf}. 

Theorem~\ref{thm:A} is totally unexpected when $W_f$ is not parabolic, given the fundamental importance of Kazhdan-Lusztig bases and how well they have been studied from various viewpoints since 1970's. We are led to the formulation of this result from a new $\imath$Schur duality and the corresponding $\imath$canonical bases, which we shall explain below momentarily. 

As $W_f$ may not be parabolic, the Hecke algebra $\Hy(W_f)$ is not a subalgebra of $\HB$ in any natural manner, and hence 
$\M_f$ is not an induced module from an $\Hy(W_f)$-module in general. Accordingly, it is more difficult to establish a key property (see Theorem~\ref{thm:Wf}) concerning the action of the simple reflections $s_i$ on the poset ${}^fW$, generalizing the parabolic case in \cite{De77, De87}. This leads to explicit formulas (see Proposition~\ref{prop:HMf}) for the actions of the generators $H_i$ of $\HB$ on the standard basis of $\M_f$ parametrized by the minimal length coset representatives for $W_f \backslash W$; remarkably, these formulas look identical to those for $W_f$ parabolic. The self-contained proof of Theorem~\ref{thm:A} (which is independent of $\imath$Schur duality below)  will occupy Section~\ref{sec:Mf}. 

The canonical bases in Theorem~\ref{thm:A} include parabolic KL bases of type A (besides those of type B) as special cases. For example, consider the non-parabolic subgroup $W_f =W_1\times \ldots \times W_1$ (generated by the $d$ sign reflections). In this case, ${}^fW =S_d$, and the canonical basis of $\M_f$ in Theorem~\ref{thm:A} is identified with the KL basis of $\Hy_{S_d}$. See Example~\ref{ex:BA}(2) where an arbitrary parabolic KL basis of type A arises as a canonical basis of type B.

\subsection{$\imath$Schur duality}

Let $\V$ be the natural representation of the Drinfeld-Jimbo quantum group $\U=\U_q(\mathfrak {sl}_{2r+\nb})$. Let $(\U, \Ui)$ be the quantum symmetric pair of type AIII formulated by G.~Letzter \cite{Let99, Let02}, where $\Ui$ is a coideal subalgebra of $\U$ whose $q\mapsto 1$ limit is the enveloping algebra of $\mathfrak{sl}(r+m) \oplus \mathfrak{gl}(r)$; we shall refer to $\Ui$ as an $\imath$quantum group. When $\V$ is viewed as a representation of $\Ui$, its standard basis $\{v_i |  i\in \Ibw\}$ is naturally bicolored (where the $m$ indices in the middle are colored as $\bu$, while the remaining $2r$ indices are colored as $\circ$). When $\nb=0$ or $1$, $\Ui$ is quasi-split, and on the other extreme when $r=0$, we have $\Ui=\U$. 

We endow the tensor space $\V^{\otimes d}$ with a (right) $\HB$-module structure. The aforementioned $\HB$-modules $\M_f$ arise as direct summands of the tensor module $\V^{\otimes d}$ of $\HB$, and are called {\em quasi-permutation modules}. Each $\M_f$ is spanned by a standard basis $M_g$ where $g$ runs over a $W_d$-orbit. (We have chosen to parametrize $\M_f$ by ``anti-dominant weights" $f$.) 

Our second main result is the following.

\begin{customthm} {\bf B} [Theorem~\ref{thm:UiHB}]
  \label{thm:B}
The actions of $\Ui$ and $\HB$ on $\V^{\otimes d}$ commute with each other, and form double centralizers.
\end{customthm} 

The $\imath$quantum group $\Ui$ comes with parameters \cite{Let02}, and for our purpose, the 
parameters are fixed once for all by the double centralizer property in Theorem~\ref{thm:B}. 

Note that in the extreme case when $r=0$ and $\Ui=\U$, we (somewhat surprisingly) claim to have an action on $\V^{\otimes d}$ by $\HB$, not by $\Hy_{S_d}$ which one is familiar with. The puzzle is resolved when we note that the action of the generator $H_0$ of $\HB$ reduces to $p\cdot \text{Id}$, and we recover Jimbo duality \cite{Jim86} (q-Schur duality of type A) in disguise in this extreme case. On the other hand, when $m=0$ or $1$, $(\U, \Ui)$ is quasi-split, and we recover the (quasi-split) $\imath$Schur duality due to \cite{BW18a} for $p=q$ (and generalized to $p=1$ in \cite{Bao17} and to general $p$ in \cite{BWW18}). The action of $H_0$ in general is a suitable mixture of the actions in the 2 special cases. 

In the setting of Jimbo duality, the generators of Hecke algebra $\Hy_{S_d}$ were realized via the R-matrix \cite{Jim86}. In the quasi-split $\imath$Schur duality, the action of the additional generator $H_0$  of $\HB$ was realized via the K-matrix by Bao and the second author \cite[Theorems 2.18, 5.4]{BW18a}  (this is the first construction of a K-matrix built on the notion of an intertwiner or quasi K-matrix therein); see also \cite{BWW18}. We show that the action of $H_0$ in the setting of Theorem~\ref{thm:B} is again realized by a K-matrix, which has been available in greater generality in Balagovic-Kolb \cite{BK19}. This can be viewed as a distinguished example that the K-matrix provides solutions to the reflection equation, a property of the K-matrix in general as established in \cite{BK19}. 

\subsection{Compatible canonical bases}

Generalizing Lusztig's approach on canonical basis in \cite{Lus92, Lus93}, Bao and the second author \cite{BW18a, BW18b} have developed a theory of $\imath$canonical basis for $\imath$quantum groups arising from quantum symmetric pairs. We showed that any based module $M$ of a quantum group of finite type (cf. \cite[Chapter ~27]{Lus93}) when viewed as a module over an $\imath$quantum group with suitable parameters can be endowed with a new bar map $\iba$ and a distinguished $\iba$-invariant basis (called $\imath$canonical basis); this construction in particular applies to the quantum symmetric pair $(\U, \Ui)$ of type AIII, and $M =\V^{\otimes d}$, as in the setting of Theorem~\ref{thm:B}. 
Denote by $\{C_g \mid g\in \Ibw^d\}$ and $\{C^*_g \mid g\in \Ibw^d\}$ the $\imath$canonical and dual $\imath$canonical basis on $\V^{\otimes d}$. 


\begin{customthm} {\bf C} [Proposition~\ref{prop:ioba}, Theorem~\ref{thm:iCBsame}]
   \label{thm:C}
(1) There exists a bar involution on $\V^{\otimes d}$ which is compatible with the bar involutions on $\Ui$ and $\HB$. 

(2) The (dual) $\imath$canonical basis on $\V^{\otimes d}$ viewed as a $\Ui$-module coincide with the (dual) quasi-parabolic KL basis on $\V^{\otimes d}$ viewed as an $\HB$-module (see Theorem~\ref{thm:A}. 
\end{customthm}

In the extreme case when $r=0$ and $\Ui=\U$ (i.e., in the setting of \cite{Jim86}), Theorem~\ref{thm:C} recovers the main result of I.~Frenkel, Khovanov and Kirillov \cite{FKK98}. In the special case when $m=0$ or $1$, it reduces to the (quasi-split) $\imath$Schur duality in \cite{BW18a} (as well as the generalizations in \cite{Bao17, BWW18}). In the general case (for arbitrary $r$ and $m$), the $\imath$canonical basis elements in $\V^{\otimes d}$ parameterized by all black nodes $\bu$ (respectively, by all white nodes $\circ$) can be identified with parabolic KL of type A (respectively, B), but there are other $\imath$canonical basis elements of mixed colors without such identifications.

\subsection{An inversion formula}

An inversion formula for KL polynomials originated in \cite{KL79} and was subsequently generalized to the parabolic setting by Douglass \cite{Do90}; also see \cite{So97} for an exposition. 
In type A, the inversion formula can be reformulated and reproved using a symmetric bilinear form on the tensor product $\U$-module $\V^{\otimes d}$; see Brundan \cite{Br06} and Cao-Lam \cite{CL16}. We generalize the approach in \cite{CL16} via the $\imath$Schur duality by formulating a bilinear form $\la \cdot, \cdot \ra$ on $\V^{\otimes d}$ as a $\Ui$-module. 

\begin{customthm} {\bf D} 
[Theorems~\ref{thm:bisym}--\ref{thm:inv}]
  \label{thm:D}
(1) The bilinear form $\la\cdot,\cdot\ra$ on $\V^{\otimes d}$ is symmetric.

(2) The $\imath$canonical basis and dual $\imath$canonical basis on $\V^{\otimes d}$ are dual with respect to $\la\cdot,\cdot\ra$, i.e., 
$\la C_{g},C^*_{-h}\ra=\delta_{g,h}$, for $g,h\in f\cdot W_d.$
\end{customthm}
Theorem~\ref{thm:D} can be reformulated as a duality between (dual) quasi-parabolic KL polynomials; see Corollary~\ref{cor:inv}. It can be extended easily to a useful duality between KL polynomials of super type BCD introduced in \cite{BW18a, Bao17}; see Remark~\ref{rem:superKL}. The proof of Theorem~\ref{thm:D}(1) uses some old and new properties of the quasi R-matrix $\Theta^\io$ introduced in \cite{BW18a} (and generalized by Kolb \cite{Ko20}) and an anti-involution $\sigma_i$ on $\Ui$ in \cite{BW21}. 

\subsection{Related works}

Different constructions of Hecke algebra modules $\M_f$ appeared in earlier works of Dipper-James-Mathas \cite{DJM98} and Du-Scott \cite{DS00}, independently. To construct $q$-Schur algebras with desired homological (such as quasi-hereditary) property, these authors were led to consider generalized $q$-permutation (i.e., quasi-permutation) modules of Hecke algebra of type B associated to cosets $W_f\backslash W_d$, with $W_f$ as in \eqref{eq:Wf1}. In their approaches, such a module is defined to be a right ideal of $\HB$ generated by a single generator, say $x_\lambda$. The elements $x_\lambda$ constructed via Jucy-Murphy elements are not bar invariant in general (in contrast to the bar-invariance of the generator $M_f$ of $\M_f$ in our construction); see Remark~\ref{rem:DJMDS}. 

It is natural for us to formulate the $q$-Schur algebras $\sch(r| \nb|r,d) =\End_{\HB}(\V^{\otimes d})$, which now depend on 3 integers $r, \nb, d$; these are close cousins of the $(Q,q)$-Schur (or $q$-Schur$^2$) algebras  in \cite{DJM98, DS00}, which depend on 2 integers. These algebras include various $q$-Schur algebras in \cite{DJ89, Gr97, BKLW18, LL21} as special cases by setting $r=0$ or $\nb=0,1$. The basis theorem established in \cite{DJM98, DS00} on $\text{Hom}_{\HB} (\M_f, \M_{f'})$ provides us a basis for $\sch(r| \nb|r,d)$. There is also a generalization of $q$-Schur algebras in a different direction which is valid for Hecke algebras of all finite types in \cite{LW22}. 

\subsection{Further directions}

Let us give brief comments on several directions in which one can extend this work. We hope to return to some of these topics elsewhere. 

We plan to explore the Hecke modules and quasi-parabolic KL bases associated to reflection subgroups of Weyl groups and Coxeter groups in greater generalities. Reflection subgroups of Weyl groups are abundant, and affine Weyl groups offer exciting new possibilities. For now we are aware that similar constructions make sense in some cases beyond type B though the level of generalities remains to be clarified.  

Further $\imath$Schur dualities in connection to $\imath$canonical bases can be formulated in the setting of quantum symmetric pairs of classical (super) finite or affine types; this will be developed elsewhere. 

There exists a bar involution on the $q$-Schur algebra $\sch(r| \nb|r,d)$ induced from the bar involutions on quasi-permutation modules $\M_f$. It will be interesting to develop a theory of canonical basis on $\sch(r| \nb|r,d)$ and study its relation to the $\imath$canonical basis on the modified $\imath$quantum group (compare \cite{BLM90, BKLW18, LiW18, LW22}). 

The Kazhdan-Lusztig bases (as well as canonical bases arising from Jimbo-Schur duality and quasi-split $\imath$Schur duality) afford geometric interpretations in terms of flag varieties \cite{KL80, BLM90, GL92, BKLW18, LiW18}. 
It will be of great importance if one finds a geometric setting for the quasi-parabolic  KL bases (as well as for the $\imath$Schur duality), and this might well stimulate  a construction of new $\imath$quiver varieties.

\subsection{Organization} 

This paper is organized as follows. The action of the Hecke algebra $\HB$ on the tensor space $\V^{\otimes d}$ is formulated in Section~\ref{sec:tensor}. We develop in Section~\ref{sec:Mf} properties for the minimal length representatives of $W_f$ in the Weyl group $W_d$. We then construct the bar involution and canonical basis on the module $\M_f$, proving Theorem~\ref{thm:A}. 

In Section~\ref{sec:duality}, we recall the $\imath$quantum group $\Ui$ and set up the $\imath$Schur duality between $\Ui$ and $\HB$ acting on $\V^{\otimes d}$; see Theorem~\ref{thm:B}. In Section~\ref{sec:iCB}, the bar involutions on $\Ui$, $\V^{\otimes d}$, and $\HB$ are shown to be compatible. We then show that the $\imath$canonical basis on $\V^{\otimes d}$ as a $\Ui$-module coincides with the canonical basis on it as an $\HB$-module, proving Theorem~\ref{thm:C}. In Section~\ref{sec:inversion}, we establish Theorem~\ref{thm:D} on an inversion formula for quasi-parabolic KL polynomials.

\vspace{2mm}

{\bf Acknowledgement.} We thank George Lusztig for insightful comments and suggestions, and thank Li Luo for helpful remarks. YS is partially supported by a GSAS fellowship at University of Virginia, and WW is partially supported by the NSF grant DMS-2001351.

\section{Modules over Hecke algebra of type B}
 \label{sec:tensor}

In this section we introduce the Hecke algebra $\HB$ of type B and its action on a tensor space. This leads to quasi-permutation modules of $\HB$.

\subsection{Weyl group and Hecke algebra of type B}

The Weyl group $W=W_d$ of type $B_d$ is generated by $s_i$, for $0\le i \le d-1$, subject to the Coxeter relations: $s_i^2=1, (s_is_{i+1})^3=1, (s_0s_1)^4=1$, and $(s_i s_j)^2=1  \ (|i-j|>1)$. The symmetric group $S_d$ is a subgroup of $W_d$ generated by $s_i$, for $1\le i \le d-1$. Denote by $\N$ the set of non-negative integers. The length function $l: W_d \rightarrow \N$ is defined such that $l(\sigma) =k$ if $\sigma $ has a reduced expression $\sigma =s_{i_1} \cdots s_{i_k}$. 

For a real number $x\in \mathbb R$ and $m \in \N$, we denote $[x, x+m] =\{x, x+1, \ldots, x+m \}$. 
For $a \in \Z_{\ge 1}$, we denote by 
\[
\I_a = \left [\frac{1-a}2, \frac{a-1}2 \right].
\]
For $r, \nb \in \N$ (not both zero), we introduce a new notation for $\I_{2r+m}$ to indicate a fixed set partition:
\begin{equation}
  \label{eq:Ibw}
\Ibw : =\I_{2r+m}, \qquad
\Ibw =\Iwl \cup \Ib \cup \Iwr
\end{equation}
where the subsets
\begin{align}
 \label{eq:III}
\Iwr = \left [\frac{\nb+1}{2}, r+\frac{\nb-1}{2} \right].
\qquad
\Ib = \left [\frac{1-\nb}{2}, \frac{\nb-1}{2} \right], 
\qquad
\Iwl = - \Iwr,
\end{align}
have cardinalities $r, m, r$, respectively.

We view $f  \in \Ibw^d$ as a map $f: \{1, \ldots, d\} \rightarrow \Ibw$, and identify 
$f=(f(1), \ldots, f(d))$, with $f(i) \in \Ibw$. 
We define a right action of the Weyl group $W_d$ on $\Ibw^d$ such that,
for $f\in \Ibw^d$ and $0\leq j\leq d-1$,
\begin{equation}
  \label{eq:WB}
f^{s_j} =f\cdot s_j = 
\begin{cases} 
 (\cdots,f(j+1),f(j),\cdots),&\text{ if } j>0; \\
 (-f(1),f(2),\cdots,f(d)), &\text{ if } j=0,\ f(1)\in \Iwl\cup \Iwr; \\
 (f(1),f(2),\cdots,f(d)), &\text{ if } j=0,\ f(1)\in \Ib.
\end{cases} 
\end{equation} 
The only nontrivial relation $(s_0s_1)^4=1$ can be verified by case-by-case inspection depending on whether or not $f(1), f(2) \in \Ib$.
We sometimes write 
$$
f^\sigma = f\cdot \sigma 
=(f(\sigma(1)),\cdots,f(\sigma(d))),$$
where it is understood that 
\[
f(\sigma(i)) =
 \begin{cases}
 f(\sigma(i)), &\text{ if } \sigma(i)>0; \\
 f(-\sigma(i)), &\text{ if } \sigma(i)<0, f(-\sigma(i))\in \Ib; \\
 -f(-\sigma(i)), &\text{ if } \sigma(i)<0,f(-\sigma(i))\in \Iwl\cup \Iwr.
\end{cases}
\]

Let $p, q$ be two indeterminates. 
We denote $q_i=q$ for $1\leq i\leq d-1$ and $q_0=p$. The Iwahori-Hecke algebra of type B, denoted by $\HB$,  is a $\Q(p,q)$-algebra generated by $H_0,H_1,\cdots,H_{d-1}$, subject to the following relations:
\begin{align*}
&(H_i-q_i)(H_i+q_i^{-1})=0,\ \ \ \ &\text{for } i\geq 0; \\
&H_iH_{i+1}H_i=H_{i+1}H_iH_{i+1},\ \ \ \ &\text{for } i\ge 1;\\
&H_iH_j=H_jH_i,\ \ \ \ &\text{for }|i-j|>1;\\
&H_0H_1H_0H_1=H_1H_0H_1H_0. 
\end{align*}
The subalgebra generated by $H_i$, for $1\le i \le d-1$, can be identified with Hecke algebra $\Hy_{S_d}$ associated to the symmetric group $S_d$. If $\sigma \in W_d$ has a reduced expression $\sigma =s_{i_1} \cdots s_{i_k}$, we denote $H_\sigma =H_{i_1} \cdots H_{i_k}$. It is well known that $\{H_\sigma \mid \sigma \in W_d \}$ form a basis for $\HB$, and  $\{H_\sigma \mid \sigma \in S_d \}$ form a basis for $\Hy_{S_d}$. 

\subsection{A tensor module of $\HB$}

Consider the $\Q(p,q)$-vector space 
\begin{align}
\label{eq:V}
\V=\bigoplus_{a\in \Ibw}\Q(p,q)v_a. 
\end{align}
Given $f=(f(1), \ldots, f(d)) \in \Ibw^d$, we denote
\[
M_f = v_{f(1)} \otimes v_{f(2)} \otimes \ldots \otimes v_{f(d)}.
\]
We shall call $f$ a weight and $\{M_f \mid f\in \Ibw^d \}$ the standard basis for $\V^{\otimes d}$. 

In cases  $|\Ib| =0$ or $1$ (i.e., $m=0$ or $1$), the following lemma reduces to \cite[(6.8)]{BW18a} or \cite[(4.4)]{BWW18} in different notations. 

\begin{lemma}\label{lem:HB}
There is a right action of the Hecke algebra $\Hy_{B_d}$ on $\V^{\otimes d}$ as follows:
$$
M_f\cdot H_i=\left\{
\begin{aligned}
&M_{f\cdot s_i}+(q-q^{-1})M_{f},\ \ & \text{ if } f(i)<f(i+1),\ i>0;\\
&M_{f\cdot s_i},\ \ &\text{ if } f(i)>f(i+1),\ i>0; \\
&qM_f,\ \ & \text{ if } f(i)=f(i+1),\ i>0; \\
&M_{f\cdot s_i}+(p-p^{-1})M_f, \ \ &\text{ if } f(1)\in \Iwr,\ i=0; \\
&M_{f\cdot s_i},\ \ &\text{ if } f(1)\in \Iwl,\ i=0, \\
&pM_f,\ \ & \text{ if } f(1)\in \Ib,\ i=0.
\end{aligned}
\right.$$
\end{lemma}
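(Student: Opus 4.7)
The plan is to verify the defining relations of $\HB$ directly on the standard basis of $\V^{\otimes d}$, exploiting the fact that each generator modifies only one or two tensor slots. First, for $i\ge 1$, the operator $H_i$ acts solely on the $i$-th and $(i+1)$-st slots through the standard Jimbo $R$-matrix formulas on $\V\otimes\V$. Consequently, the quadratic relations $(H_i-q)(H_i+q^{-1})=0$, the type~A braid relations $H_iH_{i+1}H_i=H_{i+1}H_iH_{i+1}$, and the commutations $H_iH_j=H_jH_i$ for $|i-j|>1$ all follow from Jimbo's theorem \cite{Jim86}. Likewise $H_0H_j=H_jH_0$ for $j\ge 2$ is immediate, since these operators act on disjoint tensor slots. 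So it suffices to check (i) the quadratic relation $(H_0-p)(H_0+p^{-1})=0$, and (ii) the type~B braid relation $H_0H_1H_0H_1=H_1H_0H_1H_0$.

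For (i), I would split on the color class of $f(1)$. If $f(1)\in\Ib$, then $H_0$ acts as the scalar $p$ on $M_f$, so the quadratic polynomial annihilates it. If $f(1)\in\Iwr$, then the first application uses the $\Iwr$-rule, producing $M_{f\cdot s_0}+(p-p^{-1})M_f$ with $(f\cdot s_0)(1)=-f(1)\in\Iwl$; a second application uses the $\Iwl$-rule, giving $M_fH_0^2=M_f+(p-p^{-1})M_fH_0$, which is exactly the quadratic relation. The case $f(1)\in\Iwl$ is symmetric.

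For (ii), one can restrict to the two-factor tensor $\V\otimes\V$ and examine the nine configurations of $(f(1),f(2))$ among the color regions $\Iwl,\Ib,\Iwr$, further subdivided by whether $f(1)$ equals, exceeds, or is less than $f(2)$. When neither index lies in $\Ib$, the four-fold products agree with those in the quasi-split $\imath$Schur duality computation and are covered by \cite{BW18a, BWW18}. When both lie in $\Ib$, $H_0$ acts by the scalar $p$ on every intermediate vector, so both sides collapse to $p^2H_1$ applied to $M_f$, trivially equal. The genuinely new content is the mixed configurations with exactly one of $f(1),f(2)$ in $\Ib$; in these, the intervening $H_1$ may move the $\Ib$-index between the two slots and thereby toggle which of the three rules the next $H_0$ must obey.

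The main obstacle is precisely the combinatorial bookkeeping in these mixed configurations of (ii): after each $H_1$, one has to record whether the first slot now carries an $\Ib$-index (so the next $H_0$ acts as $p$) or an index in $\Iwl\cup\Iwr$ (so the next $H_0$ produces a swap plus correction term), and the ordering of the two indices also governs whether $H_1$ yields a swap or a scalar. Once the short list of mixed sub-cases is enumerated and the attendant scalars and sign flips recorded, each side of the braid relation expands into a short linear combination of the four basis vectors in the $W_d$-orbit of $(f(1),f(2))$, and the two sides are then seen to coincide term by term.
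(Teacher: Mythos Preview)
Your proposal is correct and follows essentially the same route as the paper: invoke Jimbo for the $H_i$ with $i>0$, note that the quadratic relation for $H_0$ and the commutations $H_0H_j=H_jH_0$ ($j\ge 2$) are immediate, and then reduce the type~B braid relation to $d=2$, where the both-in-$\Ib$ case is trivial (scalar action) and the remaining cases are handled by reducing to the quasi-split computation in \cite{BW18a, BWW18} (formally treating the lone $\Ib$-index as $0$). The paper supplements this with a few explicit sample computations (e.g.\ $i\in\Iwl,\ j\in\Ib$), but your outline of the mixed-case bookkeeping matches theirs.
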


\begin{proof}
It is a well known result of Jimbo \cite{Jim86} that the first 3 formulas above for $H_i$ with $i>0$ define a right action of Hecke algebra $\Hy_{S_d}$. 

It is clear that $(H_0-p)(H_0+p^{-1})=0$ and $H_0H_i =H_iH_0$, for $i\ge 2$. 

Hence, it remains to verify the braid relation $H_0H_1H_0H_1=H_1H_0H_1H_0$. To that end, we only need to consider the case $d=2$ and verify the braid relation when acting on $v_i\otimes v_j$.

If $i,j \in \Ib$, then $H_0$ acts on the span of $v_i\otimes v_j$ and $v_j \otimes v_i$ as $p\cdot \text{Id}$, and so the braid relation $H_0H_1H_0H_1=H_1H_0H_1H_0$ trivially holds. 

Assume now that at most one of $i,j$ lies in $\Ib$. If we formally regard this possible index in $\Ib$ as 0, then we are basically reduced to the setting of the action of Hecke algebra $\HB$ \cite[(6.8)]{BW18a} or \cite[(4.4)]{BWW18} (except a different partial ordering on $\Ibw^d$ was used therein, and $q,p$ here correspond to $q^{-1}, p^{-1}$ therein).
In any case, the braid relation can be verified directly case-by-case, and we provide some details below. 

For $i<j\in \Iwl$, we have
\begin{align*}
(v_i\otimes v_j) H_0H_1H_0H_1
&= v_{-i}\otimes v_{-j}+(q-q^{-1})v_{-j}\otimes v_{-i}
\\
&= (v_i\otimes v_j) H_1H_0H_1H_0. 
\end{align*}
For $i\in \Iwl,j\in \Ib$, we have
\begin{align*}
(v_i\otimes v_j) H_0H_1H_0H_1
&= p v_{-i}\otimes v_{j}+p(q-q^{-1})v_{j}\otimes v_{-i}
\\
&= (v_i\otimes v_j) H_1H_0H_1H_0.
\end{align*}
For $i\in \Iwl,j\in \Iwr$ such that $-i>j$, we have 
\begin{align*}
&(v_i\otimes v_j) H_0H_1H_0H_1
\\
&= v_{-i}\otimes v_{-j}+(q-q^{-1})v_{-j}\otimes v_{-i} +(p-p^{-1})v_{-i}\otimes v_{j}+(p-p^{-1})(q-q^{-1})v_{j}\otimes v_{-i}
\\
&= (v_i\otimes v_j) H_1H_0H_1H_0. 
\end{align*}
The remaining cases are similar and skipped.
\end{proof}

\subsection{Quasi-permutation modules}

Recall $\Ibw^d$ from \eqref{eq:Ibw}. A weight $f\in \Ibw^d$ is called {\em anti-dominant} if
\begin{align}  \label{def:adom}
\frac{m-1}{2}\geq f(1)\geq f(2)\geq \cdots \geq f(d).
\end{align}
Note that $f(j) \in \Iwl \cup \Ib$, for $1 \le j \le d$, if $f$ is anti-dominant. 
We denote 
\[
\Ianti=\{f\in \Ibw^d \mid f \text{ is anti-dominant} \}.
\] 

We can decompose $\V^{\otimes d}$ into a direct sum of cyclic submodules generated by $M_f$, for  anti-dominant weights $f$, as follows:
\begin{align}
  \label{eq:decomp}
\V^{\otimes d}=\bigoplus_{f \in \Ianti} \M_f, \qquad \text{ where }\;  \M_f =M_f\HB.
\end{align}
Denote by $\mathcal O_f$ the orbit of $f$ under the action of $W_d$ on $\Ibw^d$. 
The following is immediate from the formulas for the action of $\HB$ in Lemma~\ref{lem:HB}.  

\begin{lemma}
\label{lem:basisMf}
The right $\HB$-module $\M_f$ admits a $\Q(q)$-basis $\{M_g \mid g \in \mathcal O_f \}$. (It will be called the {\em standard basis}.)
\end{lemma}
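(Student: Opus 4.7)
The plan is to establish the two assertions implicit in the statement: that $\{M_g \mid g \in \mathcal O_f\}$ is linearly independent, and that it spans $\M_f = M_f \HB$. Linear independence is immediate because these are distinct vectors of the standard basis of $\V^{\otimes d}$, so only the spanning statement requires argument. I would set $V = \mathrm{span}_{\Q(p,q)}\{M_g \mid g \in \mathcal O_f\}$ and prove the two inclusions $\M_f \subseteq V$ and $V \subseteq \M_f$ separately.

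For $\M_f \subseteq V$, I would observe that in each of the six cases listed in Lemma \ref{lem:HB}, the product $M_g \cdot H_i$ is a $\Q(p,q)$-linear combination of $M_{g \cdot s_i}$ and $M_g$. Since $\mathcal O_f$ is $W_d$-stable by definition, $V$ is stable under the right action of every generator $H_i$, hence under that of $\HB$; as $M_f \in V$, this forces $M_f \HB = \M_f \subseteq V$.

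For the reverse inclusion $V \subseteq \M_f$, I would take any $g \in \mathcal O_f$, pick a (not necessarily reduced) word $w = s_{i_1} \cdots s_{i_k}$ with $g = f \cdot w$, and show $M_g \in \M_f$ by induction on $k$. The base case $k = 0$ is tautological. For the inductive step, set $g' = f \cdot s_{i_1} \cdots s_{i_{k-1}}$, so that $g = g' \cdot s_{i_k}$ and $M_{g'} \in \M_f$ by induction. Inspecting Lemma \ref{lem:HB}, either $g' \cdot s_{i_k} = g'$ (in which case $g = g'$ and there is nothing to prove), or the relevant formula reads $M_{g'} \cdot H_{i_k} = M_g + \gamma\, M_{g'}$ for some $\gamma \in \{0,\, q - q^{-1},\, p - p^{-1}\}$, and rearranging gives $M_g = M_{g'} \cdot H_{i_k} - \gamma\, M_{g'} \in \M_f$. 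The only point requiring genuine care, rather than being a real obstacle, is to match the three $i=0$ cases of the $H_0$-action with the three cases in the definition \eqref{eq:WB} of $f \cdot s_0$, the case $f(1) \in \Ib$ being precisely the one where $f \cdot s_0 = f$ and $H_0$ acts by the scalar $p$, so that it falls into the trivial branch of the induction.
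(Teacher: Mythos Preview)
Your argument is correct and is precisely the unpacking of what the paper means when it says the lemma ``is immediate from the formulas for the action of $\HB$ in Lemma~\ref{lem:HB}.'' The paper gives no further proof, and your two inclusions $\M_f \subseteq V$ (stability of the orbit span under each $H_i$) and $V \subseteq \M_f$ (induction on word length, solving for $M_g$ from the action formula) are exactly the standard way to make that immediacy explicit.
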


By \eqref{def:adom}, we can suppose that $f \in \Ianti$ is of the form
\begin{align}
  \label{eq:f}
f= 
(\underbrace{a_1,\ldots,a_1}_{m_1},\ldots,\underbrace{a_k,\ldots,a_k}_{m_k},\underbrace{a_{k+1},\ldots,a_{k+1}}_{m_{k+1}},\ldots,\underbrace{a_l,\ldots,a_l}_{m_l}),
\end{align}
where $a_1> \ldots >a_k>a_{k+1}> \ldots > a_l,\ \{a_1,\ldots,a_k\}\subset \Ib,\ \{a_{k+1},\ldots, a_l\}\subset \Iwl$, and $m_1+\ldots+m_l=d$. The stabilizer subgroup of $f$ in $W_d$ is 
\begin{align}
  \label{eq:Wf}
W_f =W_{{m_1}}\times \ldots \times W_{{m_k}}\times S_{m_{k+1}}\times \ldots\times S_{m_l}. \end{align} 
Note the stabilizer subgroup $W_f$ is not a parabolic subgroup of $W_d$ when $2$ or more of the integers $m_1, \ldots, m_k$ are positive. (This phenomenon does not occur in the setting of \cite{BW18a, BWW18}.) 
We shall call the summand $\M_f$ in \eqref{eq:decomp} {\em quasi-permutation modules}. Clearly, for $f,f' \in \Ianti$, we have
\[
\M_f \cong \M_{f'}, \qquad \text{ if }\; W_f =W_{f'}.
\]  
If $W_f$ is not parabolic, $\M_f$ is in general not an induced module as those considered in parabolic Kazhdan-Lusztig theory \cite{De87}; see \cite{So97, LW23}. 

\begin{remark}
 \label{rem:DJMDS}
The quasi-permutation modules have appeared earlier in different formulations in \cite{DJM98} and~\cite{DS00}  independently. In our setting it is straightforward to write down the Hecke action and bases for the quasi-permutation modules $\M_f$ starting from $\V^{\otimes d}$, but it takes some nontrivial efforts to achieve this in \cite{DJM98, DS00}. In their approaches, the $q$-permutation modules are cyclic submodules of the right regular representation of $\HB$ with generators constructed by Jucys-Murphy elements. The quasi-permutation modules here are isomorphic to those {\em loc. cit.} integrally; this follows by comparing the formulas in Lemma~\ref{lem:HB} and \eqref{eq:decomp} with those in \cite[Lemmas~ 3.9, 3.11]{DJM98}. 
\end{remark}

\section{Canonical bases on quasi-permutation modules}
 \label{sec:Mf}

In this section, the minimal length representatives of the reflection subgroup $W_f$ of $W_d$ are studied. We construct a bar involution on the quasi-permutation modules $\M_f$ which are compatible with the bar involution on $\HB$. Then we construct a canonical basis on $\M_f$. 

\subsection{Basic properties of $W_d$}
\label{sub:permutation}

There is a natural left action of the Weyl group $W_d$ on the set 
$$
[\pm d] :=\{-d, \ldots,-2, -1,1,2,\ldots,d\}.
$$
such that 
\[
\sigma(-i)=-\sigma(i),\qquad \forall \sigma\in W_d, \, i \in [\pm d].
\]
In one line notation we write 
$$
\sigma=[\sigma(1),\ldots,\sigma(d)].
$$

Let $f\in \Ianti$. The stabilizer of $f$ in the symmetric group $S_d$ is always a parabolic subgroup generated by some subset $J(f) \subset \{s_1,\ldots, s_{d-1} \}$. We continue the notation \eqref{eq:f} for $f\in \Ianti$. Denote
\begin{align}  \label{eq:d}
d_\bu=m_1+\ldots+m_k, \qquad
d_\circ=d-d_\bu. 
\end{align}
That is, among $f(j)$, for $1\le j\le d$, the first $d_\bu$ of them belong to $\Ib$. 
Denote 
\begin{equation}
 \label{eq:ti}
t_1=s_0, \qquad
t_i=s_{i-1}t_{i-1}s_{i-1}, 
\qquad \text{ for } 1\leq i\leq d.
\end{equation}
Then $t_{i}$ is the swap (sign change) of $i$ and $-i$ while fixing $j \in [\pm d]$ with $j \neq \pm i$. 

\begin{lemma} \label{3.15}
Let $f\in \Ianti$. 
Then the stabilizer $W_f$ in $W_d$ is generated by 
\[
J_f :=\{t_i\mid 1\leq i\leq d_\bu\}\cup J(f).
\]
\end{lemma}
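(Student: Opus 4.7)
The plan is to prove the two inclusions $\langle J_f\rangle \subseteq W_f$ and $W_f \subseteq \langle J_f\rangle$ separately.

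For the easy direction, the inclusion $J(f)\subseteq W_f$ is immediate, since $J(f)$ by definition generates the stabilizer of $f$ in the subgroup $S_d$. For the elements $t_i$ with $1\le i\le d_\bu$, I will first verify by induction on $i$, using the recursion \eqref{eq:ti}, that $t_i$ acts on $[\pm d]$ as the sign change at position $i$: namely $t_i(i)=-i$ and $t_i(j)=j$ for $j\neq \pm i$. Then applying the convention for $f^\sigma$ stated right after \eqref{eq:WB}, one sees that $(f\cdot t_i)(j)=f(j)$ for $j\neq i$, while $(f\cdot t_i)(i)=f(i)$ because $f(i)\in \Ib$ (as $i\le d_\bu$ and $f$ is anti-dominant). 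Hence $t_i\in W_f$.

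For the reverse inclusion, I will first pin down the structure of $W_f$. Take any $\sigma\in W_f$ and consider a single index $j$. Writing out $f^\sigma(j)=f(j)$ with both cases $\sigma(j)>0$ and $\sigma(j)<0$, and using that an anti-dominant $f$ takes values only in $\Iwl\cup\Ib$ (in particular, no entry lies in $\Iwr$), a short case check shows: (a) if $j>d_\bu$ (a ``white'' position), then necessarily $\sigma(j)>0$, and $\sigma(j)$ lies in the same value-block as $j$; (b) if $j\le d_\bu$ (a ``black'' position), then $|\sigma(j)|$ lies in the same value-block as $j$, but the sign of $\sigma(j)$ is unconstrained. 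This recovers the block decomposition $W_f=W_{m_1}\times\cdots\times W_{m_k}\times S_{m_{k+1}}\times\cdots\times S_{m_l}$ of \eqref{eq:Wf}, in which the first $k$ factors act as signed permutations on their blocks of positions and the remaining factors act as ordinary permutations.

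It then remains to show $J_f$ generates each factor. Writing $n_i=m_1+\cdots+m_i$, each $S_{m_i}$-factor (for any $1\le i\le l$) on the block $[n_{i-1}+1,n_i]$ is generated by the transpositions $s_{n_{i-1}+1},\ldots,s_{n_i-1}$, all of which lie in $J(f)$ because $f$ is constant on the block. For $i\le k$, the Weyl group $W_{m_i}$ of type $B_{m_i}$ on the same block is generated by this symmetric group together with a single sign change, and we have available all of $t_{n_{i-1}+1},\ldots,t_{n_i}\subset \{t_1,\ldots,t_{d_\bu}\}$. Thus $\langle J_f\rangle$ contains each factor of $W_f$, yielding equality.

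The only genuinely delicate step is the structural description of $W_f$ in the middle paragraph, where one must keep track of the asymmetry in \eqref{eq:WB} between black and non-black values. The anti-dominance of $f$---specifically, the fact that no entry of $f$ is in $\Iwr$---is what rules out sign flips within white blocks and forces the hyperoctahedral behaviour to live only on black blocks; after that point the generation claim is routine.
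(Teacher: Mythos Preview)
Your proposal is correct and follows the same approach as the paper. The paper's proof is a one-sentence sketch that takes the decomposition \eqref{eq:Wf} as already established and simply observes that elements of $W_f$ are compositions of permutations in $S_d$ fixing $f$ together with sign changes on the black positions; your argument fills in exactly these details, including the case analysis that rules out sign flips on white positions via the anti-dominance of $f$.
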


\begin{proof}
Recall $f$ from \eqref{eq:f}. The lemma follows since elements in $W_f$ are compositions of permutations in $S_d$ that fix $f$ and sign changes that fix each $a_j, 1\leq j \leq k$. 
\end{proof}

For $\sigma \in W_d$, the type B inversion number $\inv_B(\sigma)$ is defined to be (cf. \cite{BB05}) 
\begin{align}  \label{invB}
\inv_B(\sigma)=\inv(\sigma)+n_B(\sigma),
\end{align}
where 
\begin{align}
&\inv(\sigma) 
=\#\{(i,j)\mid 1\le i<j \le d,\sigma(i)>\sigma(j)\};
\label{invA} \\
&n_B(\sigma)=-\sum_{\{1\leq j\leq d\mid \sigma(j)<0\}}\sigma(j).
\label{nB}
\end{align}
For $\sigma\in S_d$, $\inv_B(\sigma)=\inv(\sigma)$ coincides with the inversion number of $S_d$. 

\begin{lemma}\cite[Proposition 8.1.1]{BB05}  \label{BB}
For any $\sigma\in W_d$, we have $l(\sigma)=\inv_B(\sigma)$.
\end{lemma}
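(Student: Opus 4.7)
The plan is to verify the two defining properties of a length function in a Coxeter group for the statistic $\inv_B$: vanishing at the identity, and changing by exactly $\pm 1$ under right multiplication by any simple reflection. These two facts are well known to imply $\inv_B(\sigma)=l(\sigma)$.

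First I would show that $\inv_B(\sigma s_i)=\inv_B(\sigma)\pm 1$ for all $\sigma\in W_d$ and all $0\le i\le d-1$, with explicit control of the sign. For $i\ge 1$, right multiplication by $s_i$ merely swaps the entries $\sigma(i)$ and $\sigma(i+1)$ in the one-line notation, leaves $n_B$ invariant, and alters $\inv$ by $+1$ if $\sigma(i)<\sigma(i+1)$ and by $-1$ if $\sigma(i)>\sigma(i+1)$; this is the standard type A computation, since only the pair $(i,i+1)$ flips its inversion status while the contributions of $(a,i),(a,i+1)$ for $a<i$ and of $(i,b),(i+1,b)$ for $b>i+1$ swap in pairs and cancel. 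For $i=0$, right multiplication by $s_0$ replaces $\sigma(1)$ by $-\sigma(1)$ and fixes the other entries, so $\Delta n_B=\sigma(1)$; computing $\Delta\inv$ reduces to counting the indices $j>1$ for which $\sigma(j)$ lies in the interval $[-|\sigma(1)|,|\sigma(1)|-1]$, and the signed-permutation constraint (for each $c\in[1,d]$ exactly one of $\pm c$ appears among $\sigma(1),\ldots,\sigma(d)$) shows this count equals $|\sigma(1)|-1$. Summing the two contributions yields $\Delta\inv_B=\sgn(\sigma(1))$.

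Combined with $\inv_B(e)=0$, the above immediately gives $\inv_B(\sigma)\le l(\sigma)$ by induction along any reduced expression. For the reverse direction I would induct on $\inv_B(\sigma)$: if $\inv_B(\sigma)=0$ then $n_B(\sigma)=0$ forces $\sigma\in S_d$, and then $\inv(\sigma)=0$ forces $\sigma=e$. For the inductive step with $\sigma\ne e$, I locate a simple reflection strictly decreasing $\inv_B$ using the criterion above: take $i=0$ if $\sigma(1)<0$; otherwise, if some $\sigma(j)<0$ let $j$ be minimal, so that $\sigma(j-1)>0>\sigma(j)$ gives a descent at $i=j-1$; otherwise $\sigma\in S_d\setminus\{e\}$ has an ordinary descent. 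The inductive hypothesis then gives $l(\sigma s_i)\le\inv_B(\sigma)-1$, whence $l(\sigma)\le\inv_B(\sigma)$.

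The main obstacle is the $s_0$ analysis in the first step: individually $\inv$ and $n_B$ shift by quantities of magnitude up to $|\sigma(1)|$, and only after careful signed-permutation bookkeeping do these two large shifts conspire to cancel down to a clean $\pm 1$. The remaining verifications are routine Coxeter-theoretic manipulations.
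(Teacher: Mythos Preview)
The paper does not prove this lemma; it is simply cited from \cite[Proposition 8.1.1]{BB05} without argument. Your proof is correct and follows the standard approach (which is essentially the one in \cite{BB05}): verify that $\inv_B$ vanishes at the identity and changes by exactly $\pm 1$ under right multiplication by each simple generator, then conclude $\inv_B=l$ by the usual double induction. Your $s_0$ analysis is the crux and is handled correctly; the key observation that among $\sigma(2),\ldots,\sigma(d)$ exactly $|\sigma(1)|-1$ entries have absolute value at most $|\sigma(1)|-1$ (since the absolute values $|\sigma(j)|$ form a permutation of $\{1,\ldots,d\}$) is precisely what forces the cancellation you describe.
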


\subsection{Minimal length representatives}

Let $f\in \Ianti$. Recall the stabilizer subgroup $W_f$ \eqref{eq:Wf} of $W_d$ is a (not-necessarily parabolic) reflection subgroup in general. 

\begin{lemma}  \cite[Lemma 1.9]{Lus84} \cite[Theorem~ 2.2.5]{DS00}
Every right coset of $W_f$ in the Weyl group $W_d$ has a unique minimal length representative. 
\end{lemma}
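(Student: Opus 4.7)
The plan is to exploit the block structure of $W_f$ together with the explicit inversion formula $l(\sigma) = \inv_B(\sigma)$ from Lemma~\ref{BB}. Set $n_i := m_1 + \cdots + m_i$ (with $n_0 = 0$), so that $W_f$ is a direct product of Coxeter groups acting on the disjoint coordinate blocks $[n_{i-1}+1,\, n_i]$: the first $k$ blocks are of type $B_{m_i}$ and the last $l - k$ of type $A_{m_j - 1}$. As a Coxeter generating set $S_f \subset J_f$ for $W_f$ I would take the adjacent transpositions $s_{n_{i-1}+1}, \ldots, s_{n_i - 1}$ inside each block, together with one base sign reflection $t_{n_{i-1}+1}$ for each of the $k$ type-$B$ factors. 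Write $l_f$ for the associated length function on $W_f$; a block-by-block count using \eqref{invB}--\eqref{nB} shows $l_f = l|_{W_f}$.

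The heart of the argument is a length-additivity property: if $\sigma \in W_d$ satisfies $l(r\sigma) > l(\sigma)$ for every $r \in S_f$, then
\[
l(w\sigma) = l_f(w) + l(\sigma) \quad \text{for all } w \in W_f.
\]
Granting this, uniqueness of the minimum-length representative is immediate: two such representatives $\sigma_1, \sigma_2$ in a common coset are related by $\sigma_2 = w\sigma_1$ with $w \in W_f$, whence $l_f(w) = l(\sigma_2) - l(\sigma_1) = 0$ and $w = 1$. Existence is trivial since the coset is finite.

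I would establish length-additivity by induction on $l_f(w)$: it suffices to show that for $r \in S_f$ with $l_f(rw) = l_f(w) + 1$ and $\sigma$ as in the hypothesis, one has $l(rw\sigma) = l(w\sigma) + 1$. When $r$ is an adjacent transposition this is a direct inversion count via \eqref{invA}. When $r = t_{n_{i-1}+1}$, I would expand $r = s_{n_{i-1}} \cdots s_1 s_0 s_1 \cdots s_{n_{i-1}}$ and track how $\inv(\cdot)$ and $n_B(\cdot)$ evolve under successive left multiplications, using the inductive hypothesis on $w\sigma$ to rule out intermediate length drops.

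The main obstacle is precisely this last case: when $k \ge 2$, the base sign reflections $t_{n_1+1}, t_{n_2+1}, \ldots$ are not simple reflections of $W_d$, so the usual exchange condition does not apply directly and length changes must be monitored by hand. A cleaner route would be to invoke the general theorem that any reflection subgroup of a Coxeter group is itself a Coxeter group with length function compatible with the ambient one, which yields length-additivity in a single stroke; but since $W_f$ here is completely explicit, the direct combinatorial argument via \eqref{invB} remains self-contained and in the spirit of the preceding subsection.
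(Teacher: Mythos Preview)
The paper does not prove this lemma; it is quoted from \cite{Lus84} and \cite{DS00} and left as a citation. Your attempt to supply a self-contained proof, however, contains a genuine gap.

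The claim $l_f = l|_{W_f}$ is false whenever $k \ge 2$. Take the smallest non-parabolic case $W_f = W_1 \times W_1 = \langle t_1, t_2 \rangle \subset W_{B_2}$ (so $m_1 = m_2 = 1$). Your Coxeter generator for the second block is $t_2 = s_1 s_0 s_1$, which has $l_f(t_2) = 1$ but $l(t_2) = 3$. In general the base sign reflection $t_{n_{i-1}+1}$ for $i \ge 2$ has ambient length $2n_{i-1}+1$ but $l_f$-length $1$; the discrepancy comes from the term $n_B$ in \eqref{invB}--\eqref{nB}, which records the \emph{absolute} position of a sign change, not its position within the block. So the ``block-by-block count'' you allude to does not give what you claim.

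More seriously, your length-additivity assertion $l(w\sigma) = l_f(w) + l(\sigma)$ is itself false. With $\sigma = e$ (certainly the minimal representative of the coset $W_f$, and satisfying your hypothesis $l(r\sigma)>l(\sigma)$ for all $r\in S_f$) and $w = t_2$ in the same example, one gets $l(w\sigma) = 3 \ne 1 = l_f(w) + l(\sigma)$. The Example placed immediately after this lemma in the paper was put there precisely to warn that the parabolic identity $l(ww') = l(w) + l(w')$ breaks down; your variant with $l_f$ in place of $l$ fails for the same underlying reason. Consequently the inductive step you outline cannot work: left-multiplying by $t_{n_{i-1}+1}$ can change the ambient length by far more than $1$. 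The ``cleaner route'' you mention at the end is also a misreading of Dyer's theorem: reflection subgroups are indeed Coxeter groups, but the compatibility with the ambient length is via the reflection cocycle, not via any length-additivity formula of the shape you need.
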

Denote by ${}^fW$ the set of minimal length right coset representatives for $W_f$ in $W_d$, for $f\in \Ianti$. We shall establish a basic property for ${}^fW$. 

\begin{lemma}\label{black}
Let $1\le i \le d$ and $\sigma  \in {}^fW$. If $|\sigma(i)|\leq d_\bu$, then $\sigma(i)>0$.
\end{lemma}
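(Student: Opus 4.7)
The plan is to argue by contradiction. Suppose there is some $i \in [d]$ with $\sigma(i) = -j$ for some $1 \le j \le d_\bu$. By Lemma~\ref{3.15}, the sign-change reflection $t_j$ lies in $W_f$, so $t_j \sigma$ is again a representative of the coset $W_f \sigma$. It therefore suffices to derive the strict inequality $l(t_j \sigma) < l(\sigma)$, which contradicts the fact that $\sigma$ is the minimal length representative of $W_f\sigma$.

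The first step is to describe $t_j \sigma$ concretely. Since $\sigma$ is a signed permutation, the value $-j$ occurs exactly once among $\sigma(1),\ldots,\sigma(d)$ (at position $i$), while $+j$ does not occur in this list at all (its preimage under $\sigma$ is $-i \notin [d]$). Hence $t_j \sigma$ coincides with $\sigma$ on $[d]$ except at position $i$, where $(t_j\sigma)(i) = j > 0$.

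Next I would evaluate $l(t_j \sigma) - l(\sigma)$ using the decomposition $l = \inv_B = \inv + n_B$ from Lemma~\ref{BB}. The term $n_B$ drops by exactly $j$, since the only summand of $n_B(\sigma)$ affected is $-\sigma(i) = j$, which is removed in passing to $t_j \sigma$. For $\inv$, only pairs involving position $i$ are affected, and a direct case check gives
\[
\inv(t_j \sigma) - \inv(\sigma) = \#\{b > i : |\sigma(b)| < j\} - \#\{a < i : |\sigma(a)| < j\}.
\]
Since the restriction of $\sigma$ to $[d]$ picks exactly one element from each pair $\{l,-l\}$, the total number of $k \in [d] \setminus \{i\}$ with $|\sigma(k)| < j$ equals $j - 1$. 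Consequently the inversion difference is bounded above by $j - 1$, and altogether $l(t_j\sigma) - l(\sigma) \le (j-1) - j = -1$, the desired contradiction.

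The main obstacle is the careful bookkeeping of the change in $\inv$: one must separate inversions of the form $(a,i)$ from those of the form $(i,b)$, use that $\pm j$ each appear at most once as $\sigma(k)$ with $k \in [d]$, and then exploit the global count $j - 1$ coming from the signed-permutation structure to conclude that the gain in $\inv$ never compensates for the loss of $j$ in $n_B$. The remaining ingredients, namely the identification $t_j \in W_f$ via Lemma~\ref{3.15} and the length formula of Lemma~\ref{BB}, are already at hand.
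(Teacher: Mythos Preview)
Your argument is correct and follows essentially the same route as the paper's proof: contradict minimality of $\sigma$ by showing $l(t_j\sigma)\le l(\sigma)-1$ via the decomposition $l=\inv+n_B$, with $n_B$ dropping by $j$ and $\inv$ rising by at most $j-1$. Your inversion bookkeeping is more explicit than the paper's one-line estimate, but the underlying idea is identical.
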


\begin{proof}
We prove by contradiction. Suppose this were not true, then there exists $1\le i_\bu \le d$ such that $\sigma(i_\bu)<0$ and $u_\bu=|\sigma(i_\bu)|\leq d_\bu$. By Lemma~\ref{3.15} we have $ t_{u_\bu} \in W_f$ and thus $ t_{u_\bu}\sigma \in W_f\sigma$. Now by \eqref{nB} we have $n_B(t_{u_\bu}\sigma )=n_B(\sigma)-u_\bu.$
On the other hand, since there are at most $u_\bu-1$ indices less than $u_\bu$, we have $\inv(t_{u_\bu}\sigma )\leq \inv(\sigma)+u_\bu-1$. 
Hence by the above 2 identities, \eqref{invB} and Lemma~\ref{BB}, we have 
\begin{align*}
l(t_{u_\bu}\sigma)
& 
=\inv(t_{u_\bu}\sigma ) +n_B(t_{u_\bu}\sigma )
\\
& \leq \inv(\sigma ) +n_B(\sigma ) -1
=l(\sigma)-1,
\end{align*}
which is a contradiction to the minimal length property of $\sigma$.
\end{proof}

\begin{example}
If $W_f$ is non-parabolic, the equality $l(ww') =l(w)+l(w')$ may fail for $w \in W_f$ and $w' \in {}^fW$. For example, take $W_f =\langle s_0, s_1s_0s_1 \rangle \subset W_{B_2}$ and $s_1$ is the minimal length representative of $W_f s_1$. Note $(s_1s_0s_1) s_{1} =s_1s_0$, but $l(s_1s_0s_1) + l(s_{1}) =4 \neq 2 =l(s_1s_0)$. 
\end{example}

The example above indicates \cite[Lemma 2.1(i)-(ii)]{De87} may fail for non-parabolic reflection subgroups. 
The next theorem, which is a generalization of \cite[Lemma 2.1(iii)]{De87} to reflection subgroups, is more difficult to establish. It will play a key role in constructing the bar involution and canonical bases for quasi-permutation modules. 

\begin{theorem}
   \label{thm:Wf}
Let $\sigma\in {}^fW$, and $0\le i \le d-1$. Then exactly one of the following possibilities occurs:
\begin{enumerate}
\item[(i)]
 $l(\sigma s_i)<l(\sigma)$. In this case, $\sigma s_i  \in {}^fW$; 
\item[(ii)]
 $l(\sigma s_i)>l(\sigma)$ and $\sigma s_i \in {}^fW$; 
\item[(iii)]
 $l(\sigma s_i)>l(\sigma)$ and $\sigma s_i \notin {}^fW$, for $i \neq 0$. In this case, $\sigma s_i=s' \sigma$, for some $s'\in J(f)$; 
\item[(iii$_0$)]
 $l(\sigma s_0)>l(\sigma)$ and $ \sigma s_0 \notin {}^fW$. In this case, $\sigma s_0=t \sigma$, for some $t \in J_f\backslash J(f)$.
\end{enumerate}
(More precisely, in case (iii), we have $f(\sigma(i)) =f(\sigma(i+1))$ and $s' =(|\sigma(i)|, |\sigma(i+1)|)$; in case (iii$_0$),  $\sigma(1)>0$ and $t =t_{\sigma(1)}$.)
\end{theorem}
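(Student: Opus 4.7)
The four alternatives are pairwise exclusive (distinguished by the sign of $l(\sigma s_i)-l(\sigma)$, by whether $\sigma s_i\in{}^fW$, and by the value of $i$), so it suffices to establish each case's stated conclusion from its hypotheses.

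For case (i), my plan is a direct length bound: if $l(\sigma s_i)<l(\sigma)$ and some $w\in W_f\setminus\{e\}$ satisfied $l(w\sigma s_i)<l(\sigma s_i)$, then $l(w\sigma)\le l(w\sigma s_i)+1\le l(\sigma)-1$, violating $\sigma\in{}^fW$.

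For (iii) and (iii$_0$), I would invoke the strong exchange condition. Assuming $l(\sigma s_i)>l(\sigma)$ and $\sigma s_i\notin{}^fW$, standard Coxeter theory (applied to the reflection subgroup $W_f$) supplies a reflection $t\in W_f$ with $l(t\sigma s_i)=l(\sigma s_i)-1=l(\sigma)$. Fixing a reduced expression $\sigma=s_{j_1}\cdots s_{j_m}$, the word $s_{j_1}\cdots s_{j_m}s_i$ is reduced for $\sigma s_i$, and the strong exchange condition in $W_d$ exhibits $t\sigma s_i$ as this word with one letter deleted. Deleting some $s_{j_p}$ would produce $t\sigma\in W_f\sigma$ of length $<l(\sigma)$, contradicting minimality; hence $s_i$ must be deleted, giving $\sigma s_i=t\sigma$ with $t=\sigma s_i\sigma^{-1}\in W_f$. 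Next I would identify $t$ by direct conjugation. For $i=0$, $t=t_{\sigma(1)}$ is the sign change at $\sigma(1)$; the hypothesis $l(\sigma s_0)>l(\sigma)$ forces $\sigma(1)>0$, and $t_{\sigma(1)}\in W_f$ then pins down $\sigma(1)\le d_\bu$ (i.e., $f(\sigma(1))\in\Ib$), so $t\in J_f\setminus J(f)$. For $i>0$, $t$ is a swap of $\sigma(i),\sigma(i+1)$; a negative swap in $W_f$ would require $|\sigma(i)|,|\sigma(i+1)|\le d_\bu$, whence Lemma~\ref{black} would make $\sigma(i),\sigma(i+1)>0$, contradicting opposite signs. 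So $\sigma(i)$ and $\sigma(i+1)$ share a sign, $t$ is the positive swap $s'=(|\sigma(i)|,|\sigma(i+1)|)$, and $f(\sigma(i))=f(\sigma(i+1))$.

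The hard part will be upgrading $s'$ to an adjacent transposition, i.e., $||\sigma(i)|-|\sigma(i+1)||=1$, which is needed for $s'\in J(f)$. My plan for the positive sub-case $0<a:=\sigma(i)<b:=\sigma(i+1)$ is to assume $b>a+1$ and derive a contradiction. By anti-dominance, $f$ is constant on $[a,b]$, so every simple transposition $s_c=(c,c+1)$ with $a\le c<b$ lies in $W_f$, forcing $l(s_c\sigma)>l(\sigma)$. Using the decomposition $l=\inv+n_B$ from Lemma~\ref{BB}, an inductive case analysis on the sign with which $\pm(c+1)$ occurs in $\sigma$'s one-line notation should show, for $c=a,a+1,\ldots,b-1$ in turn: (a) $-(c+1)$ cannot occur (it would force $l(s_c\sigma)=l(\sigma)-1$ via the $n_B$ drop); and (b) $+(c+1)$ must sit at a position $k_{c+1}$ strictly greater than the position of $+c$ (with the convention $k_a:=i$). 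Propagation then gives $i+1<k_{a+1}<k_{a+2}<\cdots<k_b$, but $\sigma(i+1)=b$ already forces $k_b=i+1$---a contradiction, so $b=a+1$. The both-negative sub-case $\sigma(i)=-a<-b=\sigma(i+1)$ with $a>b>d_\bu$ runs symmetrically, iterating downward through $s_{a-1},s_{a-2},\ldots,s_b$ to locate the positions of $-(a-1),-(a-2),\ldots,-(b+1)$, and is ultimately finished off by applying $s_b$ to produce a length drop. This iterative sign/position propagation, which crucially uses Lemma~\ref{black} to rule out configurations where Lemma~\ref{black}'s constraint is violated, is the main technical obstacle and is exactly where the non-parabolic nature of $W_f$ must be handled with care.
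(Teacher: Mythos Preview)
Your approach is correct and genuinely different from the paper's. The paper organizes the entire case split around the weight action: for $i>0$ it conditions on whether $f^\sigma(i)$ is less than, greater than, or equal to $f^\sigma(i+1)$, and for $i=0$ on whether $f^\sigma(1)$ lies in $\Iwr$, $\Iwl$, or $\Ib$; the identity $\sigma s_i=t\sigma$ with $t\in W_f$ then drops out of the stabilizer description. For the adjacency step $||\sigma(i)|-|\sigma(i+1)||=1$, the paper expresses $\sigma s_i$ as a long product of simple reflections times $\sigma$ (see \eqref{eq:ss}) and argues that any gap would permit a length-reducing deletion from a reduced word for $\sigma$, contradicting minimality. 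Your route is purely Coxeter-combinatorial: Dyer's characterization of minimal coset representatives for reflection subgroups supplies the reflection $t\in W_f$, strong exchange pins it down as $\sigma s_i\sigma^{-1}$, and your adjacency proof is a direct sign/position propagation from the formula $l=\inv+n_B$. This buys independence from the tensor-module framework (the paper itself flags that its argument ``uses the action of $W_d$ on $\V^{\otimes d}$ crucially''), at the price of importing Dyer's theorem on reflection subgroups as an external input; the paper's proof is more self-contained in that respect. One small imprecision worth tightening: the reflection $t$ furnished by Dyer need not satisfy $l(t\sigma s_i)=l(\sigma s_i)-1$ exactly, only $l(t\sigma s_i)<l(\sigma s_i)$, but your strong-exchange deletion argument only uses the strict inequality, so the logic is unaffected.
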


\begin{proof}
We shall compare $\sigma \in {}^fW$ with $\sigma s_i$. Our argument below uses the action of $W_d$ on $\V^{\otimes d}$ crucially. We separate the proof into 2 cases depending on whether or not $i=0$. 
\vspace{2mm}

(1) Assume $i=0$. We separate into 3 subcases (i$_0$)-(iii$_0$) below by the range of $f^\sigma(1)$. 

\vspace{2mm}

(i$_0$) $\underline{f^{\sigma}(1)\in \Iwr  \Rightarrow \text{Case (i) for }i=0}$. 

In this case, we have $\sigma(1)<0$ since $f(\sigma(1)) = f^{\sigma}(1)\in \Iwr$ while $f(j) \not \in \Iwr$ (for $1\le j\le d$) thanks to $f$ being anti-dominant. 

{\bf Claim 1.} $l(\sigma s_0)=l(\sigma)-1$. 

Indeed, by Lemma~\ref{BB} it suffices to show that $\inv_B(\sigma s_0) < \inv_B(\sigma)$. 
Note that $\sigma s_0(j) =\sigma (j)$, for $2\le j \le d$, and $\sigma s_0(1) >0 > \sigma (1)$. By \eqref{nB} we have $n_B(\sigma s_0)=n_B(\sigma)+\sigma(1).$
On the other hand, we have $\inv(\sigma s_0)\leq \inv(\sigma)-\sigma(1)-1$ since there are at most $(-\sigma(1)-1)$ indices smaller than $-\sigma(1)$. Hence by \eqref{invB}, 
$\inv_B(\sigma s_0)\leq \inv_B(\sigma)-1,$ and Claim~1 follows.

It remains to verify that $\sigma s_0 \in {}^fW$. If this were not true, there exists $\tau\in  W_f\sigma s_0$ such that $l(\tau)<l(\sigma s_0)=l(\sigma)-1$. Hence $l(\tau s_0)\leq l(\tau)+1<l(\sigma)$; this is a contradiction since $\tau s_0\in W_f  \sigma $ and $\sigma$ is a minimal length representative of $W_f  \sigma $.

\vspace{2mm}

(ii$_0$) $\underline{f^\sigma(1)\in \Iwl  \Rightarrow \text{Case (ii) for }i=0}$. 

In this case, $f^{\sigma s_0}(1)\in \Iwr$, and $\sigma(1)>0$, thanks to $f$ being anti-dominant. Arguing as in (i$_0$) for Claim~1, we have $l(\sigma s_0 )=l(\sigma)+1$.
It remains to verify that $ \sigma s_0 \in {}^fW$. If this were not true, we choose the minimal length representative $\tau\in W_f\sigma s_0$. Since $\tau\in {}^fW$ and $f^\tau(1)\in \Iwr$, by (i$_0$) we know that $l(\tau s_0)=l(\tau)-1<l(\sigma s_0)-1=l(\sigma)$; this is a contradiction since $\tau s_0\in W_f \sigma$ and $\sigma$ is a minimal length representative of $W_f  \sigma $. 

\vspace{2mm}

(iii$_0$) $\underline{f^\sigma(1)\in \Ib \Rightarrow \text{Case (iii$_0$)}}$. 

Thanks to $f^\sigma(1)\in \Ib$, we obtain $f^{\sigma}=f^{\sigma s_0}$, that is, $\sigma s_0\in W_f\sigma $. Then $l(\sigma s_0)>l(\sigma)$ and $\sigma s_0 \notin {}^fW$, since $\sigma$ is a minimal length representative in $W_f\sigma$. Also, we have $\sigma s_0 \sigma ^{-1} =t_{|\sigma(1)|}$, and thus, $\sigma s_0=t_{|\sigma(1)|}\sigma$; cf. \eqref{eq:ti}. Since $f^{\sigma}(1)\in \Ib$, we have $|\sigma(1)|\leq d_\bu$; cf. \eqref{eq:d}. By Lemma~\ref{black}, we know that $\sigma(1)>0$. Hence, $t_{\sigma(1)} \in J_f\backslash J(f)$. 

\vspace{2mm}

(2) Assume $i>0$. We compare $\sigma \in {}^fW$ with $\sigma s_i$. By using inversion numbers, we see that $l(\sigma s_i)>l(\sigma)$ if and only if $f^{\sigma}(i)\geq f^{\sigma}(i+1)$. We separate into 3 subcases (i)-(iii) below depending on whether $f^{\sigma}(i)-f^{\sigma}(i+1)$ is negative, positive or zero.

\vspace{2mm}

(i) $\underline{(f^{\sigma}(i)<f^{\sigma}(i+1) ) \Rightarrow \text{Case (i)} \text{ for }i>0}$. 

In this case, $l(\sigma s_i)<l(\sigma)$. 
It remains to verify that $\sigma s_i \in {}^fW$. If this were not true, then there exists $\tau\in W_f\sigma s_i$ such that $l(\tau)<l(\sigma s_i)=l(\sigma)-1$. Thus $l(\tau s_i)\leq l(\tau )+1<l(\sigma)$; this is a contradiction since $\sigma$ has the minimal length and $\tau s_i\in W_f\sigma$. 

\vspace{2mm}

(ii) $\underline{(f^{\sigma}(i)>f^{\sigma}(i+1) ) \Rightarrow \text{Case (ii)} \text{ for }i>0}$. 

In this case, $l(\sigma s_i)>l(\sigma)$. 
Let us verify $\sigma s_i \in {}^fW$. If this were not true, choose the minimal length representative $\tau \in W_f\sigma s_i$. Since $f^{\tau }(i)<f^\tau (i+1)$, by (i) we have $l(\tau s_i)=l(\tau )-1<l(\sigma s_i)-1\leq l(\sigma)$, which is again a contradiction. 

\vspace{2mm}

(iii)  $\underline{(f^{\sigma}(i)=f^{\sigma}(i+1) )  \Rightarrow \text{Case (iii)}}$. 

In this case, $f^{\sigma s_i}=f^{\sigma}$, and $\sigma s_i\in W_f\sigma$. 
Without loss of generality we assume that $|\sigma(i)|<|\sigma(i+1)|$. 
It follows from the anti-dominance of $f$ that $\sigma(i)$ and $\sigma(i+1)$ have the same sign if $f^\sigma(i)=f^\sigma(i+1) \in \Iwl \cup \Iwr$;     
On the other hand,  if $f^\sigma(i) =f^\sigma(i+1) \in \Ib$, then $\sigma(i)$ and $\sigma(i+1)$ have the same $+$ sign by Lemma~\ref{black}. 

Therefore, we have $f(|\sigma(i)|)=f(|\sigma(i+1)|)$, and thus,
\begin{align}
\label{eq:conj}
\sigma s_i \sigma^{-1} = (|\sigma(i)|, |\sigma(i)|+1),
\end{align} 
that is,
\begin{align}
  \label{eq:ss}
\sigma s_i=s_{|\sigma(i)|}s_{|\sigma(i)|+1}\cdots s_{|\sigma(i+1)|-1}\cdots s_{|\sigma(i)|+1} s_{|\sigma(i)|}\sigma \in W_f \sigma. 
\end{align}
Since $f$ is anti-dominant (cf. \eqref{def:adom}), we must have 
\[
\{s_{|\sigma(i)|},s_{|\sigma(i)|+1},\cdots,s_{|\sigma(i+1)|-1}\}\subset J(f).
\]

{\bf Claim.} We have $|\sigma(i+1)|=|\sigma(i)|+1$.

Let us prove the Claim. Let $\sigma=s_1's_2'\cdots s_k'$ be a reduced expression. 
Assume to the contrary that $|\sigma(i+1)|>|\sigma(i)|+1$. Then we can reduce the length of the RHS of \eqref{eq:ss} by deleting a pair of simple reflections, at least one of which is some $s_i'$ from $\sigma$; otherwise, it would contradict the identity \eqref{eq:conj}. Now the element in the RHS of \eqref{eq:ss} after the deletion contradicts the minimality of $\sigma$ as a representative of $W_f \sigma$. Thus the Claim holds. 

Hence, setting $s' = (|\sigma(i)|, |\sigma(i+1)|) \in J(f)$, we have $s' \sigma=  \sigma s_i$.
\end{proof}

\begin{remark}
The conditions in Theorem~\ref{thm:Wf} have their counterparts in terms of $f^\sigma$ listed in the proof above, and they are useful in later applications. For instance, for $\sigma \in {}^fW$ and $i>0$, we have $\sigma s_i\in {}^fW$ if and only if $f^{\sigma}(i)\neq f^{\sigma}(i+1)$. 
\end{remark}

\subsection{The Hecke modules $\M_f$ revisited}

Recall the action of Hecke algebra on $\V^{\otimes d}$ from Lemma~\ref{lem:HB} and hence on $\M_f$ from \eqref{eq:decomp}. Applying Theorem~\ref{thm:Wf} and its proof, we shall obtain explicit descriptions for the action of the Hecke generators $H_i$ on the standard basis $\{M_{f\cdot \sigma}\mid \sigma \in {}^fW\}$ for $\M_f$, which is independent of the tensor module $\V^{\otimes d}$. Clearly, the length inequalities in Theorem~\ref{thm:Wf}  can be replaced by the Chevalley-Bruhat order $\le$ on $W_d$. 

\begin{proposition}
   \label{prop:HMf}
Let $f \in \Ianti$, $\sigma\in {}^fW$, and $0\le i \le d-1$. Then 
%
\begin{align*}
M_{f\cdot \sigma} H_i =
\begin{cases}
M_{f\cdot \sigma s_i} +(q_i -q_i^{-1}) M_{f\cdot \sigma}, & \text{ if }\ \sigma s_i<\sigma;
\\
M_{f\cdot \sigma s_i}, & \text{ if }\ \sigma s_i> \sigma \text{ and } \sigma s_i \in {}^fW;
\\
qM_{f\cdot \sigma}, & \text{ if } \  i\neq 0,\ \sigma s_i> \sigma \text{ and } \sigma s_i \not\in {}^fW;
\\
pM_{f\cdot \sigma}, & \text{ if }\   i= 0,\ \sigma s_0> \sigma \text{ and } \sigma s_0 \not\in {}^fW. 
\end{cases}
\end{align*}
\end{proposition}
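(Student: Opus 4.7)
The plan is to reduce the proposition to a direct case-by-case verification, matching the four cases of Theorem~\ref{thm:Wf} against the six cases in the action formula of Lemma~\ref{lem:HB}. The conceptual bridge is that Theorem~\ref{thm:Wf} was proved by translating each condition on $(\sigma, \sigma s_i)$ into a condition on the weight $g := f\cdot \sigma$: specifically on whether $g(i)$ is less than, greater than, or equal to $g(i+1)$ (for $i>0$), or on which of the subsets $\Iwl,\Iwr,\Ib$ contains $g(1)$ (for $i=0$).

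First I would apply Lemma~\ref{lem:HB} directly to $M_{f\cdot\sigma} H_i$, which gives a formula determined entirely by the weight $g = f\cdot\sigma$. Then I would read off from the proof of Theorem~\ref{thm:Wf} the precise dictionary between its four cases and the six cases of Lemma~\ref{lem:HB}:
\begin{itemize}
\item For $i>0$: Case (i) of Theorem~\ref{thm:Wf} corresponds to $g(i)<g(i+1)$; Case (ii) corresponds to $g(i)>g(i+1)$; Case (iii) corresponds to $g(i)=g(i+1)$.
\item For $i=0$: Case (i) corresponds to $g(1)\in\Iwr$; Case (ii) corresponds to $g(1)\in\Iwl$; Case (iii$_0$) corresponds to $g(1)\in\Ib$.
\end{itemize}
In each of these situations the Lemma~\ref{lem:HB} output is precisely the value claimed by the Proposition, with $q_i=q$ for $i>0$ and $q_0=p$ uniformly absorbing the two sub-families of formulas. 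Observe also that ``$\sigma s_i \in {}^fW$'' in Cases (i) and (ii) guarantees that $M_{f\cdot\sigma s_i}$ is again a standard basis vector of $\M_f$ (and not confused with another element of the orbit), while in Cases (iii) and (iii$_0$) the fact that $\sigma s_i \notin {}^fW$ is reflected in the formula $f\cdot \sigma s_i = f\cdot \sigma$ (so no new basis vector appears).

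There is essentially no hard step: the substance of the argument has already been done inside the proof of Theorem~\ref{thm:Wf}, which established precisely those weight-theoretic characterizations for each of the four length/coset possibilities. The only mildly delicate point is bookkeeping in Case (iii$_0$), where one must check that the action of $H_0$ lands on $pM_{f\cdot\sigma}$ rather than on some term indexed by $f\cdot t_{\sigma(1)}\sigma$; this is automatic because in the tensor module the first tensor factor $v_{f^\sigma(1)}$ lies in $\Ib$, and Lemma~\ref{lem:HB} assigns the eigenvalue $p$ in that situation. A parallel remark covers Case (iii) with $i>0$, using $f^\sigma(i)=f^\sigma(i+1)$ and the third bullet of Lemma~\ref{lem:HB}. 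Collecting the four verifications completes the proof.
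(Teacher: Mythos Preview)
Your proposal is correct and follows essentially the same approach as the paper's proof: both reduce the proposition to Lemma~\ref{lem:HB} via the dictionary, extracted from the proof of Theorem~\ref{thm:Wf}, between the four length/coset cases and the weight-theoretic conditions on $f^\sigma(i), f^\sigma(i+1)$ (for $i>0$) or the membership of $f^\sigma(1)$ in $\Iwl,\Iwr,\Ib$ (for $i=0$). The paper's write-up is terser but the content is identical.
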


\begin{proof}
In this proof we label the four cases in the proposition as (i), (ii), (iii), (iii$_0$), as they 
exactly correspond to the 4 cases in the same labelings in Theorem~\ref{thm:Wf}. 

We first assume $i\neq 0$. Then the cases (i), (ii), (iii) here match with the cases (i), (ii), (iii) in the proof of Theorem~\ref{thm:Wf} in the same order, which correspond to the 3 conditions $f^{\sigma}(i)<f^{\sigma}(i+1)$, $f^{\sigma}(i) >f^{\sigma}(i+1)$, and $f^{\sigma}(i) =f^{\sigma}(i+1)$ therein, respectively. Hence, the formulas in the proposition (with $i \neq 0$) follow by the first 3 formulas in Lemma~\ref{lem:HB}. 

Now we assume $i= 0$. Then the cases (i), (ii), (iii$_0$) here match with the cases (i$_0$), (ii$_0$), (iii$_0$) in the proof of Theorem~\ref{thm:Wf} in the same order, which correspond to the 3 conditions $f^{\sigma}(1) \in \Iwr$, $f^{\sigma}(1) \in \Iwl$, and $f^{\sigma}(1) \in \Ib$ therein, respectively. Hence the formulas in the proposition (with $i = 0$) follow by the last 3 formulas in Lemma~\ref{lem:HB}. 
%
%
%
\end{proof}

\begin{remark}
The formulas in Proposition~\ref{prop:HMf} miraculously take the same form as in the parabolic case \cite{De87, So97}. However, in contrast to {\em loc. cit.} it seems difficult to verify directly these formulas define a representation of $\HB$ in such a general reflection subgroup setting. The proof of Theorem~\ref{thm:Wf} provides us a crucial identification as {\em posets} between the orbit $f\cdot W_d$ (used in Lemma~\ref{lem:HB}) and the set of minimal length representatives ${}^fW$ for $W_f \backslash W_d$ (used in Proposition~\ref{prop:HMf}).
\end{remark}

\subsection{The bar involution on $\M_f$}

We prepare some lemmas toward the construction of the bar involution on $\M_f$. 

\begin{lemma}\label{2b}
For $f \in \Ianti$ and $\sigma \in {}^fW$, we have $M_fH_\sigma=M_{f\cdot \sigma}$.
\end{lemma}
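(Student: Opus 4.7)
The natural approach is induction on the length $\ell(\sigma)$. The base case $\sigma=e$ is immediate since $H_e=1$ and $M_{f\cdot e}=M_f$.

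For the inductive step, pick a reduced expression $\sigma=s_{i_1}\cdots s_{i_k}$ and set $\sigma'=s_{i_1}\cdots s_{i_{k-1}}$, so $\sigma=\sigma' s_i$ with $i=i_k$ and $\ell(\sigma')=\ell(\sigma)-1$. The first sub-task is to verify the prefix property: $\sigma'\in{}^fW$. If instead some $\tau\in W_f\sigma'$ satisfied $\ell(\tau)<\ell(\sigma')$, then $\tau s_i\in W_f\sigma$ would have length at most $\ell(\tau)+1<\ell(\sigma)$, contradicting the minimality of $\sigma$ in $W_f\sigma$. Hence $\sigma'\in{}^fW$.

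Now by the standard Hecke identity $H_\sigma=H_{\sigma'}H_i$ (since $\ell(\sigma)=\ell(\sigma')+1$) and the induction hypothesis,
\[
M_f H_\sigma = M_f H_{\sigma'} H_i = M_{f\cdot\sigma'} H_i.
\]
Since $\sigma=\sigma' s_i>\sigma'$ and $\sigma=\sigma' s_i\in{}^fW$, we are in case (ii) of Proposition~\ref{prop:HMf}, which gives $M_{f\cdot\sigma'}H_i=M_{f\cdot\sigma' s_i}=M_{f\cdot\sigma}$, completing the induction.

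The only potentially nontrivial step is the prefix property $\sigma'\in{}^fW$; this is the place where, for a non-parabolic reflection subgroup, one might worry about the failure of analogues of \cite[Lemma 2.1]{De87} noted in the text. The short length argument above bypasses that concern, using only the defining minimality of elements of ${}^fW$, so no real obstacle arises. Everything else is a direct bookkeeping of Proposition~\ref{prop:HMf}.
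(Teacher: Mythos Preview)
Your proof is correct and follows essentially the same induction-on-length approach as the paper; the only difference is that you invoke Proposition~\ref{prop:HMf} (case (ii)) for the final step $M_{f\cdot\sigma'}H_i=M_{f\cdot\sigma}$, whereas the paper re-derives this from Lemma~\ref{lem:HB} together with the case analysis in the proof of Theorem~\ref{thm:Wf}. Your direct prefix argument for $\sigma'\in{}^fW$ is equivalent to the paper's appeal to Theorem~\ref{thm:Wf}(i), and since Proposition~\ref{prop:HMf} does not rely on Lemma~\ref{2b}, there is no circularity.
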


\begin{proof}
We use induction on $l(\sigma)$. The case for $l(\sigma)=0$ is trivially true. 
 If $l(\sigma)=1$, then $\sigma =s_i$ for some $i$. If $i=0$, we have $f(1)\in \Iwl$, as otherwise we would have $s_0\in W_f$ (contradicting $\sigma =s_0 \in {}^fW$). Hence, $M_f H_0=M_{f\cdot s_0}$, by Lemma~\ref{lem:HB}. If $\sigma=s_i$ for $i>0$, we must have $f(i)>f(i+1)$. Thus $M_fH_i=M_{f\cdot s_i}$, again by Lemma~\ref{lem:HB}. 
 
Suppose $l(\sigma)>0$. We have a reduced expression $\sigma=s_{i_1}\cdots s_{i_k}$. Denote $\sigma'=s_{i_1}\cdots s_{i_{k-1}}$, and note  $l(\sigma') <l(\sigma)$. By Theorem~\ref{thm:Wf}(i), $\sigma' \in {}^fW$. By the inductive assumption, $M_fH_{\sigma'} =M_{f\cdot \sigma'}$.
Now if $s_{i_{k}}=s_0$, then  this only happens when $f^{\sigma'}(1)\in \Iwl$, by case~(i$_0$) in the proof of Theorem~\ref{thm:Wf}. Thus, we have $M_fH_{\sigma}=M_fH_{\sigma'} H_0=M_{f\cdot \sigma'}H_0=M_{f\cdot \sigma}$, by Lemma~\ref{lem:HB}. If $s_{i_{k}}=s_j$ for some $j\ge 1$, similarly we must have $f^{\sigma'}(j)>f^{\sigma'}(j+1)$, by case~(i) in the proof of Theorem~\ref{thm:Wf}. Thus we have $M_fH_{\sigma}=M_fH_{\sigma'} H_j=M_{f\cdot \sigma'}H_j=M_{f\cdot \sigma}$, again by Lemma~\ref{lem:HB}.
\end{proof}

\begin{lemma}\label{lem:re}
Suppose that $\sigma\in {}^fW$ satisfies that $1\neq | \sigma(1)| \leq d_\bu$. Then $\sigma(1)>1$, and $\sigma$ must have a reduced expression which starts with $s_{\sigma(1)-1}s_{\sigma(1)-2}\cdots s_2s_1$.
\end{lemma}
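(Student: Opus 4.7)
The first claim $\sigma(1) \ge 2$ is immediate: $|\sigma(1)| \le d_\bu$ forces $\sigma(1) > 0$ by Lemma~\ref{black}, and $\sigma(1) \ne 1$ by hypothesis. Write $k := \sigma(1)$. My plan for the reduced-expression claim is to exhibit $s_{k-1}, s_{k-2}, \ldots, s_1$ as successive left descents of $\sigma$. Concretely, set $\sigma_0 := \sigma$ and $\sigma_j := s_{k-j}\,\sigma_{j-1}$ for $1 \le j \le k-1$, so that a direct computation gives $\sigma_j(1) = k-j$. If I can show $l(\sigma_j) = l(\sigma_{j-1}) - 1$ for each $j$, then $\sigma = s_{k-1} s_{k-2} \cdots s_1 \cdot \sigma_{k-1}$ with $l(\sigma) = (k-1) + l(\sigma_{k-1})$, and prefixing any reduced expression for $\sigma_{k-1}$ by $s_{k-1} s_{k-2} \cdots s_1$ yields the reduced expression we want.

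The inductive step reduces to the following single-step claim: if $\tau \in W_d$ satisfies $\tau(1) = m \ge 2$ and $-(m-1)$ is absent from the image $\{\tau(1), \ldots, \tau(d)\}$, then $l(s_{m-1}\tau) = l(\tau) - 1$. I plan to verify this via the formula $l = \inv_B = \inv + n_B$ from Lemma~\ref{BB}. The hypothesis forces $m-1$ to occur with positive sign at a unique position $p > 1$, and $s_{m-1}\tau$ differs from $\tau$ only at positions $1$ and $p$, where the positive values $m$ and $m-1$ are swapped. A case check on pairs involving one of $\{1, p\}$ shows that the only inversion which flips is the pair $(1, p)$ itself (from an inversion to a non-inversion), because for any integer $v \notin \{m-1, m\}$ the strict inequality $m-1 < v < m$ fails. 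Since only positive values are moved, $n_B$ is unchanged, and hence $\inv_B$ drops by exactly one.

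To iterate along $\sigma_0, \sigma_1, \ldots, \sigma_{k-2}$, I need the hypothesis of the single-step claim to propagate. The key observation is that the image multiset of $\sigma_j$ is identical to that of $\sigma$, since left multiplication by $s_i$ with $i \ge 1$ only permutes values without altering signs. Lemma~\ref{black} applied to $\sigma$ excludes $-1, -2, \ldots, -d_\bu$ from this common image, and because $k \le d_\bu$, at step $j$ the required negative value $-(k-j)$ lies in this excluded range. This suffices to run the iteration to completion and conclude $l(\sigma_{k-1}) = l(\sigma) - (k-1)$.

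The main obstacle I anticipate is the combinatorial bookkeeping inside the single-step claim: carefully inspecting each pair involving position $1$ or $p$ in both $\inv$ and $n_B$ and confirming that the net change is exactly $-1$. This rests on integrality of signed values together with the exclusion of $\pm(m-1), \pm m$ from all positions other than $1, p$ (note that $-m$ is automatically absent since $\tau(1) = m$ in a signed bijection, so only the hypothesis on $-(m-1)$ is an actual assumption).
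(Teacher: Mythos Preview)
Your proof is correct and follows essentially the same strategy as the paper: invoke Lemma~\ref{black} for positivity, then peel off the left factors $s_{k-1}, s_{k-2}, \ldots, s_1$ one at a time by showing each drops the length by one via the formula $l=\inv_B=\inv+n_B$. The paper packages this as an induction on $l(\sigma)$ while you unroll it as an explicit iteration; your isolation of the single-step claim (stated for arbitrary $\tau$, not only $\tau\in{}^fW$) is actually a bit cleaner, since the paper tacitly applies the inductive hypothesis to $s_{u-1}\sigma$ without checking that it lies in ${}^fW$.
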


\begin{proof}
Lemma~\ref{black} is applicable by the assumption, and so we must have $\sigma(1)>0$, and then $\sigma(1)>1$, thanks to the assumption $1\neq | \sigma(1)|$.

Set $u=\sigma(1)$. We prove the lemma by induction on the length of $\sigma$. If $l(\sigma)=1$, then $\sigma=s_1$ (thanks to $\sigma(1)>1$), and the lemma holds trivially. 

Now suppose that  $l(\sigma)>1$. There exists $1\leq a\leq d$ such that $\sigma(a)=u-1$ by Lemma~\ref{black}. Then we have $s_{u-1}\sigma (1)=u-1, s_{u-1}\sigma (a)=u$ and thus \[
l(s_{u-1}\sigma )=\inv_B(s_{u-1}\sigma)=\inv_B(\sigma)-1=l(\sigma)-1.
\]
By the inductive assumption, $s_{u-1}\sigma$ has a reduced expression which starts with $s_{\sigma(1)-2}\cdots s_2s_1$.  Therefore, $\sigma$ has a reduced expression which starts with $s_{\sigma(1)-1}s_{\sigma(1)-2}\cdots s_2s_1$.
\end{proof}

The bar involution on $\mathscr H_{B_d}$, denoted by$~^-$, is the $\Q$-algebra automorphism such that
\[
\bar{H}_{i}=H_{i}^{-1},
\ \bar{q}=q^{-1},
\ \bar{p}=p^{-1},
\ \forall 0\le i \le d-1.
\]
(We shall refer to a map such that $q^m \mapsto q^{-m}$ and $p^m \mapsto p^{-m}$ {\em anti-linear}.)

Let $f \in \Ianti$. We define a $\Q$-linear map $\iba$ on the module $\M_f$ (which has a basis $M_{f\cdot \sigma}$, for $\sigma \in {}^fW$) by
\begin{align}
  \label{eq:barM}
\iba(q)=q^{-1},\quad \iba(p)=p^{-1},\quad \iba(M_{f \cdot \sigma})=M_f\bar H_{\sigma},\quad \forall \sigma \in {}^fW.
\end{align}

Now we can establish the existence of bar involution on $\M_f$, generalizing the parabolic case \cite{De87, So97}.

\begin{proposition}
  \label{prop:iHba}
Let $f \in \Ianti,$. The map $\iba$ on $\M_f$ in \eqref{eq:barM} is compatible with the bar operator on the Hecke algebra, i.e., 
\begin{align}
  \label{MxH}
\iba(xh)=\iba(x) \overline{h}, 
\qquad
\text{for all $x \in \M_f, \  h \in \HB$}.
\end{align}
In particular, $\iba^2 =\text{Id}$. (We shall call $\iba$ the bar involution on $\M_f$.)
\end{proposition}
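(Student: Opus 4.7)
The plan is to prove the compatibility \eqref{MxH} directly on pairs $(M_{f\cdot\sigma},H_i)$; by anti-linearity in the scalars and because $\HB$ is generated by $H_0,\ldots,H_{d-1}$, this suffices. Once compatibility is established, $\iba^2=\mathrm{Id}$ follows from Lemma~\ref{2b}: $\iba^2(M_{f\cdot\sigma})=\iba(M_f)\cdot\overline{\overline{H_\sigma}}=M_f H_\sigma=M_{f\cdot\sigma}$. The task therefore splits into the four cases of Proposition~\ref{prop:HMf}.

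Cases (i) and (ii) are dispatched by length additivity: in case (i) one has $\bar H_\sigma=\bar H_{\sigma s_i}\bar H_i$, so the identity collapses to the quadratic relation $\bar H_i^2=(q_i^{-1}-q_i)\bar H_i+1$; in case (ii) both sides equal $M_f\bar H_{\sigma s_i}$. In case (iii), Theorem~\ref{thm:Wf}(iii) supplies $s_j\in J(f)$ with $s_j\sigma=\sigma s_i$; taking bars of $H_j H_\sigma=H_\sigma H_i$ gives $\bar H_\sigma\bar H_i=\bar H_j\bar H_\sigma$, and since $f(j)=f(j+1)$ forces $M_f\bar H_j=q^{-1}M_f$ by Lemma~\ref{lem:HB}, both sides equal $q^{-1}M_f\bar H_\sigma=\iba(qM_{f\cdot\sigma})$.

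The delicate case is (iii$_0$), where $i=0$, $\sigma s_0=t_u\sigma$, and $u=\sigma(1)\in[1,d_\bu]$; here $\iba(M_{f\cdot\sigma}H_0)=\iba(pM_{f\cdot\sigma})=p^{-1}M_f\bar H_\sigma$, so one must prove $M_f\bar H_\sigma\bar H_0=p^{-1}M_f\bar H_\sigma$. I would invoke Lemma~\ref{lem:re} to fix a reduced expression $\sigma=s_{u-1}\cdots s_1\tau$ (empty prefix when $u=1$), and compare the two reduced expressions of $\sigma s_0=t_u\sigma$: namely $s_{u-1}\cdots s_1\tau s_0$ on one hand, and on the other $s_{u-1}\cdots s_1 s_0\tau$ obtained from $t_u\sigma=s_{u-1}\cdots s_1 s_0 s_1\cdots s_{u-1}\cdot s_{u-1}\cdots s_1\tau$ after the middle cancellation. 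Equating the corresponding $H$-products in $\HB$ and cancelling the invertible factor $H_{u-1}\cdots H_1$ yields $H_\tau H_0=H_0 H_\tau$, hence $\bar H_\tau\bar H_0=\bar H_0\bar H_\tau$. Setting $\rho=s_{u-1}\cdots s_1$ so that $\bar H_\sigma=\bar H_\rho\bar H_\tau$, the whole identity reduces to the sub-claim $M_f\bar H_\rho\bar H_0=p^{-1}M_f\bar H_\rho$.

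The sub-claim is established as follows: $\bar H_\rho$ lies in the subalgebra of $\HB$ generated by $H_1,\ldots,H_{u-1}$, so every standard basis vector $M_g$ occurring in the expansion $M_f\bar H_\rho=\sum_g c_g M_g$ is obtained from $M_f$ by operations that touch only positions $1,\ldots,u$, whence $g(1)\in\{f(1),\ldots,f(u)\}$. Because $u\le d_\bu$, every $f(j)$ with $1\le j\le u$ lies in $\Ib$, and Lemma~\ref{lem:HB} then gives $M_g H_0=pM_g$, equivalently $M_g\bar H_0=p^{-1}M_g$, for each such $g$. Summing over $g$ yields the sub-claim and completes case (iii$_0$). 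The main obstacle is precisely this case: because $W_f$ is non-parabolic, the generator $t_u\in W_f$ is not a simple reflection and $M_f H_{t_u}$ is typically not a scalar multiple of $M_f$, so the classical parabolic argument is unavailable; it is the combination of Lemma~\ref{lem:re} (which pins down a preferred reduced expression for $\sigma$) with the colour restriction $\{f(1),\ldots,f(u)\}\subset\Ib$ that ultimately allows the reduction to a scalar action of $H_0$.
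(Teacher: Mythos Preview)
Your proof is correct and follows essentially the same approach as the paper: the reduction to pairs $(M_{f\cdot\sigma},H_i)$, the case analysis via Theorem~\ref{thm:Wf}/Proposition~\ref{prop:HMf}, and in case~(iii$_0$) the use of Lemma~\ref{lem:re} together with the fact that $u\le d_\bu$ forces the first $u$ entries of $f$ to lie in $\Ib$ so that $H_0$ acts by $p$. The only organizational differences are that you treat $u=1$ and $u>1$ uniformly (the paper splits these into subcases) and you make the commutation $H_\tau H_0=H_0 H_\tau$ in $\HB$ explicit by comparing the two reduced expressions of $\sigma s_0$, whereas the paper computes both sides directly using the reduced expression $t_u\sigma=s_{u-1}\cdots s_1 s_0 s_{i_1}\cdots s_{i_m}$.
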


\begin{proof}
Note $\iba(M_f) =M_f$, by definition \eqref{eq:barM}.

A simple induction on $l(w)$ reduces the proof of \eqref{MxH}, for $h =H_w$ with $w\in W_d$, to proving the following formula:
\begin{align}
  \label{MHH}
\iba(x H_i)=\iba(x)\bar H_i,
\qquad \text{ for all }  x\in \M_f, 0 \le i\le d-1.
\end{align}

It suffices to verify \eqref{MHH} for the basis elements of $\M_f$, $x = M_fH_{\sigma}$ (that is, $x=M_{f\cdot \sigma}$ by Lemma~\ref{2b}), for $\sigma \in {}^fW$. We proceed case-by-case following Theorem~\ref{thm:Wf}. 

(i) Assume $l(\sigma s_i)<l(\sigma)$. In this case $ \sigma s_i \in {}^fW$, and thus 
\begin{align*}
\iba(M_fH_{\sigma} H_i)
& =\iba(M_f H_{\sigma s_i} +(q_i-q_i^{-1}) M_f H_{\sigma}) \\
&= M_f \bar H_{\sigma s_i}+(q_i^{-1} -q_i) M_f \bar H_{\sigma} \\
&= M_f(\overline{H_{\sigma s_i}+(q_i -q_i^{-1})H_{\sigma}})
=M_f\bar H_{\sigma} \bar H_i
=\iba(M_fH_{\sigma})\bar H_i.
\end{align*}

(ii) If $l(\sigma s_i)>l(\sigma)$ and $\sigma s_i \in {}^fW$, then
\begin{align*}
\iba(M_fH_{\sigma}H_i)=\iba(M_fH_{\sigma s_i})
=\iba(M_f)\bar H_{\sigma s_i}
=\iba(M_f)\bar H_{\sigma} \bar{H_i}
=\iba(M_fH_{\sigma})\bar H_i. 
\end{align*}

(iii) Assume $l(\sigma s_i)>l(\sigma)$ and $ \sigma s_i\notin {}^fW$, for $i> 0$. In this case, we have $ \sigma s_i=s'\sigma $ for some $s'\in J(f)$, and $M_f H_{s'}=q M_f$ by Lemma~\ref{lem:HB}. Thus, we have 
\begin{align*}
\iba(M_fH_{\sigma} H_i)=\iba(M_fH_{\sigma s_i})=\iba(M_f H_{s'\sigma})=\iba(qM_fH_{\sigma})=q^{-1}M_f\bar H_{\sigma}.
\end{align*}   
On the other hand, we have
\begin{align*}
\iba(M_fH_{\sigma})\bar H_i
=M_f \bar{H}_{\sigma} \bar{H_i}
=M_f\bar H_{\sigma s_i}
=M_f\bar H_{s'\sigma}
=M_fH_{s'}^{-1}\bar H_{\sigma}
=q^{-1}M_f\bar H_{\sigma}.
\end{align*}
Hence \eqref{MHH} holds for $x =M_{f} H_{\sigma}$ in this case.

(iii$_0$) Assume $i=0$, $l(\sigma s_0)>l(\sigma)$, and $\sigma s_0\notin {}^fW$. By Theorem~\ref{thm:Wf}(iii$_0$) and its proof in case (iii$_0$), we have $f^{\sigma}(1)\in \Ib$ and thus $|\sigma(1)|\leq d_\bu$. By Lemma~\ref{black}, $\sigma(1)>0$. We separate into 2 subcases (iii$_0$-1) and (iii$_0$-2).

\underline{Subcase (iii$_0$-1):  ${\sigma(1)=1}$}. 
Then $f(1)\in \Ib$ and $s_0\sigma=\sigma s_0$, by Theorem~\ref{thm:Wf}(iii$_0$) and its proof  in case (iii$_0$). Thus we have 
\begin{align*}
\iba(M_fH_{\sigma} H_{0})=&\iba(M_fH_{\sigma s_0})=\iba(M_fH_{s_0\sigma})=\iba(M_fH_0H_{\sigma})=p^{-1}M_f\bar H_{\sigma}.
\end{align*}
On the other hand, 
$\iba(M_fH_{\sigma})\bar H_0=M_f\bar H_{\sigma s_0}=M_f \bar H_{s_0\sigma}=p^{-1}M_f\bar H_{\sigma}.$  
So $\iba(M_fH_{\sigma} H_{0})= \iba(M_fH_{\sigma})\bar H_0$, proving \eqref{MHH} for $x =M_{f} H_{\sigma}$ in this case.

\underline{Subcase (iii$_0$-2): ${\sigma(1) >1}$}. 
Set $u=\sigma(1) \le d_\bu$.
We have $\sigma s_0 =t_{u}\sigma$ by Theorem~\ref{thm:Wf}(iii$_0$); see \eqref{eq:ti} for $t_u$. By Lemma~\ref{lem:re}, $\sigma$ has a reduced expression of the form 
\[
\sigma=s_{u-1}s_{u-2}\cdots s_2s_1s_{i_1}\cdots s_{i_m}. 
\]
Hence, $t_u\sigma=s_{u-1}\cdots s_1s_0s_{i_1}\cdots s_{i_m}$, also a reduced expression for length reason. Thus  
\begin{align*}
\iba(M_fH_{\sigma}H_{0})
&= \iba(M_f H_{\sigma s_0}) =\iba(M_fH_{t_u\sigma}) \\
&= \iba(M_f H_{s_{u-1}\cdots s_1}H_{0} H_{s_{i_1}\cdots s_{i_m}}) \\
(u\leq d_\bu, \text{Lemma}~\ref{lem:HB} \text{ for }H_0)\Rightarrow \quad
&= p^{-1}\iba(M_f H_{s_{u-1}\cdots s_1} H_{s_{i_1}\cdots s_{i_m}}) \\
&= p^{-1}M_f\bar H_{\sigma}.
\end{align*}
On the other hand, we have
\begin{align*}
\iba(M_fH_{\sigma})\bar H_0
&= M_f\bar H_{\sigma} \bar{H_0}=M_f\bar H_{\sigma s_0}=M_f \bar H_{t_u\sigma} \\
&= M_f \overline{H_{s_{u-1}\cdots s_1}}\bar H_0 \overline{H_{s_{i_1}\cdots s_{i_m}} }  \\
(u\leq d_\bu, \text{Lemma}~\ref{lem:HB} \text{ for } H_0)\Rightarrow\quad 
&= p^{-1} M_f \overline{H_{s_{u-1}\cdots s_1}} \, \overline{H_{s_{i_1}\cdots s_{i_m}} }  \\
&= p^{-1}M_f\bar H_{\sigma}.
\end{align*}
Therefore, the proof of \eqref{MHH} is completed, for all $x =M_{f} H_{\sigma}$. 

Finally, we have 
$\iba^2 (M_fH_{\sigma})=M_f\bar{\bar{H}}_{\sigma}=M_fH_{\sigma}$, i.e.,  $\iba^2 = \text{Id}$.
\end{proof}

\begin{remark}
Recalling \eqref{eq:f} and \eqref{eq:d}, we define reflection subgroups
$
S_f := S_{{m_1}}\times \ldots \times 
 S_{m_l}$,
$S_f^\bu := S_{d_\bu} \times S_{m_{k+1}}\times \ldots\times S_{m_l}$,
and $W_f^\bu := W_{d_\bu} \times S_{m_{k+1}}\times \ldots\times S_{m_l}$ 
of $W_d$. Note that $S_f \stackrel{\text{par.}}{\subset} S_f^\bu \subset W_f^\bu \stackrel{\text{par.}}{\subset}  W_d$, where $\text{par.}$ stands for parabolic. 
Let us outline a $3$-step induction process of realizing (an isomorphic copy of) the $\HB$-module $\M_f$: first induce the 1-dimensional ``trivial" module from $\Hy_{S_f}$ to $\Hy_{S_f^\bu}$, then view the $\Hy_{S_f^\bu}$-module as an $\Hy_{W_f^\bu}$-module by imposing the action of $H_0$ as $p \cdot \text{Id}$, and finally induce once more from $\Hy_{W_f^\bu}$ to $\HB$. The bar involution on $\M_f$ can also be understood this way. We will not use this remark in this paper. This 3-step process can  be formalized and its generalization to other types will be treated in detail elsewhere. 
\end{remark}

\subsection{Canonical basis on $\M_f$}

For the formulation of canonical basis on $\M_f$, we shall specialize to a one-parameter setting. Our assumption below that $p\in q^\Z$ below amounts to choosing distinguished weight functions \`a  la Lusztig \cite{Lus03}. (The general weight functions therein work here too, but it would require additional notations to set up properly.)

Suppose $p \in q^\Z$. Then $\HB$ becomes a $\Q(q)$-algebra, and $\M_f$ becomes a $\Q(q)$-vector space and an $\HB$-module. The bar involution $\iba$ on $\M_f$ remain valid. 
With Proposition~\ref{prop:HMf} and Proposition~\ref{prop:iHba} at our disposal, the proof of the next theorem follows by standard arguments.

\begin{theorem}
  \label{thm:CBMf}
Suppose $p \in q^\Z$, and let $f\in \Ianti$. Then for each $\sigma \in {}^fW$, there exists a unique element $C_{ \sigma} \in \M_f$ such that
\begin{enumerate}
\item[(i)]
$\iba (C_{\sigma}) =C_{\sigma}$;
\item[(ii)]
$C_{\sigma} \in M_{f\cdot \sigma}+\sum_{w\in {}^fW }\limits 
q^{-1}\Z[q^{-1}] M_{f\cdot w}.$
\end{enumerate}
\noindent Moreover,  we have 
\begin{enumerate}
\item[(ii$'$)]  $C_{\sigma} \in M_{f\cdot \sigma} +\sum_{w\in {}^fW, w< \sigma}\limits 
q^{-1}\Z[q^{-1}] M_{f\cdot w}.$
\end{enumerate}
\end{theorem}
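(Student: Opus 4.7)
The plan is to prove existence and uniqueness by induction on $l(\sigma)$, adapting the classical Kazhdan-Lusztig algorithm to this quasi-parabolic setting. The base case $\sigma = e$ is immediate: set $C_e = M_f$, which is bar-invariant by Proposition~\ref{prop:iHba} and trivially satisfies (ii) and (ii$'$).

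The essential preliminary step is to establish Bruhat triangularity of the bar involution on the standard basis:
\begin{equation}
  \label{eq:triplan}
\iba(M_{f\cdot\sigma}) \in M_{f\cdot\sigma} + \sum_{w\in{}^fW,\, w<\sigma} \Z[q,q^{-1}]\, M_{f\cdot w}.
\end{equation}
To prove \eqref{eq:triplan}, fix a reduced expression $\sigma = s_{i_1}\cdots s_{i_k}$. By Lemma~\ref{2b} and the definition of $\iba$, we have $\iba(M_{f\cdot\sigma}) = M_f\,\bar{H}_{i_1}\cdots \bar{H}_{i_k}$. Substituting $\bar{H}_i = H_i - (q_i - q_i^{-1})$ and expanding over all subsets $J = \{j_1<\cdots<j_\ell\} \subseteq \{1,\ldots,k\}$ yields a $\Z[q,q^{-1}]$-linear combination of products $M_f H_{i_{j_1}}\cdots H_{i_{j_\ell}}$. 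I would then evaluate each such product by iterated application of Proposition~\ref{prop:HMf}: at each step a standard basis element $M_{f\cdot\tau}$ either moves to $M_{f\cdot\tau s_i}$ (possibly with a scalar-multiple correction term) or becomes a scalar multiple of itself. A trajectory that lands on $M_{f\cdot\sigma}$ after $\ell$ steps requires $\ell$ net length-increases, which forces $\ell = k$ and $J = \{1,\ldots,k\}$; that unique trajectory contributes coefficient $1$ (equal to $M_f H_\sigma = M_{f\cdot\sigma}$ by Lemma~\ref{2b}). All other trajectories land in $M_{f\cdot w}$ with $w < \sigma$, yielding \eqref{eq:triplan}.

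With \eqref{eq:triplan} at hand, the standard KL argument produces $C_\sigma$. For uniqueness, the difference $D$ of two candidates would be bar-invariant and lie in $\sum_{w<\sigma} q^{-1}\Z[q^{-1}] M_{f\cdot w}$; picking the maximal $w$ with nonzero coefficient $d_w$ and comparing the coefficient of $M_{f\cdot w}$ in $\iba(D) = D$ via \eqref{eq:triplan} forces $\bar{d}_w = d_w$, hence $d_w \in q^{-1}\Z[q^{-1}] \cap q\Z[q] = 0$, a contradiction. For existence, assuming inductively that $C_w$ exists and satisfies (i), (ii$'$) for all $w<\sigma$, the element $\iba(M_{f\cdot\sigma}) - M_{f\cdot\sigma}$ is bar-antisymmetric and (by \eqref{eq:triplan}) lies in $\sum_{w<\sigma} \Z[q,q^{-1}] M_{f\cdot w}$; rewriting it in the $\{C_w\}_{w<\sigma}$-basis (possible by inductive triangularity) and using the parity constraint on bar-antisymmetric Laurent polynomials, I can solve uniquely for coefficients $a_w \in q^{-1}\Z[q^{-1}]$ so that $C_\sigma := M_{f\cdot\sigma} + \sum_{w<\sigma} a_w C_w$ satisfies $\iba(C_\sigma) = C_\sigma$. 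Property (ii$'$) follows automatically: \eqref{eq:triplan} only involves $w \leq \sigma$, and each inductively-constructed $C_w$ only involves $u \leq w < \sigma$.

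The main obstacle is \eqref{eq:triplan}: in the parabolic case it is inherited from upper-triangularity of the bar involution on $\HB$ via the induction construction of $\M_f$, but here $\M_f$ is not an induced module, so triangularity must be proved directly from the explicit Hecke action. The decisive inputs are Proposition~\ref{prop:HMf} and Theorem~\ref{thm:Wf}, which together guarantee that a single application of any generator $H_i$ never escapes the span of $\{M_{f\cdot w}\}_{w\in{}^fW}$ and interacts cleanly with the Bruhat order on ${}^fW$.
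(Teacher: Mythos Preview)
Your argument is correct and takes a genuinely different route from the paper's. The paper works with the bar-invariant elements $b_i=H_i+q_i^{-1}$, rewrites Proposition~\ref{prop:HMf} as the action formula~\eqref{eq:MM} for $b_i$, and then invokes Soergel's recursive construction \cite[Theorem~3.1]{So97} verbatim: one builds $C_\sigma$ as $C_{\sigma'}b_i$ minus a correction by lower $C_w$'s, inducting on Bruhat order. This requires a small case split on the sign of the exponent in $p\in q^{\Z}$ (replacing $b_0$ by $H_0-p$ or $H_0$ when the exponent is negative or zero). By contrast, you first establish Bruhat unitriangularity~\eqref{eq:triplan} of $\iba$ on the standard basis directly from Proposition~\ref{prop:HMf} and the subword characterization of Bruhat order, and then run the abstract Lusztig-style argument (bar-antisymmetry of $\iba(M_{f\cdot\sigma})-M_{f\cdot\sigma}$, rewrite in the inductively known $C_w$'s, split each coefficient as $a_w-\bar a_w$ with $a_w\in q^{-1}\Z[q^{-1}]$). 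Your approach is uniform in $p\in q^{\Z}$ and cleanly separates the combinatorial input~\eqref{eq:triplan} from the formal KL machinery; the paper's approach is more constructive, yielding an explicit recursion for $C_\sigma$ and making the role of the bar-invariant generators $b_i$ visible. One small point worth making explicit in your write-up of~\eqref{eq:triplan}: the reason every trajectory endpoint $w$ satisfies $w\le\sigma$ is that it is a (possibly non-reduced) product of a subsequence of $s_{i_1},\ldots,s_{i_k}$, hence after deletion of pairs a reduced subword of the chosen reduced expression of $\sigma$.
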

The set $\{ C_{\sigma} | \sigma \in {}^fW \}$ is called a {\em canonical basis or quasi-parabolic KL basis} for $\M_f$. 

\begin{proof}
Let $\sigma \in {}^fW$. Assume $p\in q^{\Z_{> 0}}$, and set $b_i =H_i +q_i^{-1}$, which is bar invariant. Proposition~\ref{prop:HMf} can be rewritten as
\begin{align}  \label{eq:MM}
M_{f\cdot \sigma} b_i =
\begin{cases}
M_{f\cdot \sigma s_i} + q_i M_{f\cdot \sigma}, & \text{ if }\ \sigma s_i<\sigma;
\\
M_{f\cdot \sigma s_i} + q_i^{-1} M_{f\cdot \sigma}, & \text{ if }\ \sigma s_i> \sigma \text{ and } \sigma s_i \in {}^fW;
\\
(q+q^{-1}) M_{f\cdot \sigma}, & \text{ if }\ \sigma s_i> \sigma \text{ and } \sigma s_i \not\in {}^fW,   i\neq 0;
\\
(p+p^{-1}) M_{f\cdot \sigma}, & \text{ if }\ \sigma s_0> \sigma \text{ and } \sigma s_0 \not\in {}^fW. 
\end{cases}
\end{align}
Now the existence of $C_{\sigma}$ satisfying Conditions~(i) and (ii$'$) can be proved using \eqref{eq:MM} by an induction on the Chevalley-Bruhat order for $\sigma$, following exactly the same argument as for \cite[Theorem 3.1]{So97}. 

(For $p\in q^{\Z_{< 0}}$, one reruns the argument therein by using a variant of \eqref{eq:MM} with $b_0 =H_0 -p$; for $p=1$, one uses $b_0=H_0$ instead.)

The uniqueness of the basis $\{C_{\sigma} \}$ follows from the following (cf. \cite{So97}).

{\bf Claim.} Suppose $z=\sum_{w\in {}^fW} h_wM_{f\cdot w}$ with all $h_w \in q^{-1}\Z[q^{-1}]$ satisfies $\iba(z)=z$. Then $z=0$. 

Indeed, if $z\neq 0$, we can choose $w'$ with maximal length such that $h_{w'} \neq 0$. Then it follows by the existence of $\{ C_{\sigma}\}$ satisfying (i) and (ii$'$) above and $z=\iba(z)$ that $h_{w'} =\bar h_{w'}$, which forces $h_{w'}=0$ (since $h_{w'} \in q^{-1}\Z[q^{-1}]$), which is a contradiction. The Claim follows. 
\end{proof}

Set $b_i' =H_i -q_i$. Proposition~\ref{prop:HMf}, for $f \in \Ianti, \sigma \in {}^fW$, can be rewritten as
\begin{align}  \label{eq:MM2}
M_{f\cdot \sigma} b_i' =
\begin{cases}
M_{f\cdot \sigma s_i} - q_i^{-1} M_{f\cdot \sigma}, & \text{ if }\ \sigma s_i<\sigma;
\\
M_{f\cdot \sigma s_i} - q_i M_{f\cdot \sigma}, & \text{ if }\ \sigma s_i> \sigma \text{ and } \sigma s_i \in {}^fW;
\\
0, & \text{ if }\ \sigma s_i> \sigma \text{ and } \sigma s_i \not\in {}^fW,   i\neq 0;
\\
0, & \text{ if }\ \sigma s_0> \sigma \text{ and } \sigma s_0 \not\in {}^fW. 
\end{cases}
\end{align}
The following counterpart of Theorem~\ref{thm:CBMf} (with $q^{-1}$ replaced by $q$) can be proved in the same way using \eqref{eq:MM2}. 

\begin{proposition}
  \label{prop:dualCBMf}
Suppose $p \in q^\Z$. There exists a basis $\{ C^*_{\sigma} | \sigma \in {}^fW \}$  (called dual canonical basis) for $\M_f$ which is characterized by 
$\iba (C^*_{\sigma}) =C^*_{\sigma}$ and 
$C^*_{\sigma} \in M_{f\cdot \sigma}+\sum_{w\in {}^fW}  q \Z[q] M_{f\cdot w}.$ 
Moreover, we have $C^*_{\sigma} \in M_{f\cdot \sigma}+\sum_{w\in {}^fW \atop w<\sigma} 
q \Z[q] M_{f\cdot w}.$
\end{proposition}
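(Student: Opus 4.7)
The plan is to mirror the construction used in Theorem~\ref{thm:CBMf}, but with the bar-invariant element $b_i' = H_i - q_i$ in place of $b_i = H_i + q_i^{-1}$, so that the expansion coefficients land in $q\Z[q]$ instead of $q^{-1}\Z[q^{-1}]$. First I would verify bar-invariance of $b_i'$: the quadratic relation gives $H_i^{-1} = H_i - (q_i - q_i^{-1})$, hence $\iba(b_i') = H_i^{-1} - q_i^{-1} = b_i'$. The action of $b_i'$ on the standard basis is already recorded in \eqref{eq:MM2}, and its shape (with $-q_i^{\pm 1}$ corrections) is precisely what the $q\Z[q]$-expansion requires.

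For existence I induct on the Chevalley--Bruhat order on ${}^fW$. Set $C^*_e = M_f$. For $\sigma > e$, choose a simple reflection $s_i$ with $\sigma s_i < \sigma$; by Theorem~\ref{thm:Wf}(i), $\sigma s_i \in {}^fW$, so $C^*_{\sigma s_i}$ is available by induction. Form the bar-invariant product $C^*_{\sigma s_i} \cdot b_i'$. Applying the second case of \eqref{eq:MM2} to $\sigma s_i$ (noting $(\sigma s_i)s_i = \sigma > \sigma s_i$ and $\sigma \in {}^fW$), the leading contribution of $M_{f\cdot \sigma s_i}$ becomes $M_{f\cdot \sigma} - q_i M_{f\cdot \sigma s_i}$, while contributions from lower terms of $C^*_{\sigma s_i}$ remain in the $\Z[q,q^{-1}]$-span of $\{M_{f\cdot w} : w<\sigma\}$ since the Bruhat interval $[e,\sigma]$ is closed under right multiplication by $s_i$ in the relevant sense. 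Next, subtract a bar-invariant $\Z[q,q^{-1}]$-combination of previously constructed $C^*_w$ with $w<\sigma$ to clear the coefficients lying outside $q\Z[q]$, exactly following the Soergel correction procedure in the proof of Theorem~\ref{thm:CBMf} (cf. \cite[Theorem~3.1]{So97}). The strong triangularity with respect to $<$ is preserved throughout because every $C^*_w$ used in the correction is itself $<\sigma$-triangular by the inductive hypothesis.

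For uniqueness I reuse the Claim in the proof of Theorem~\ref{thm:CBMf} with $q^{-1}\Z[q^{-1}]$ replaced by $q\Z[q]$: if $z = \sum_w h_w M_{f\cdot w}$ with all $h_w \in q\Z[q]$ is bar-invariant, then choosing $w'$ of maximal length with $h_{w'}\neq 0$ and comparing the coefficient of $M_{f\cdot w'}$ in $z$ and $\iba(z)$ through the triangular expansions of the $C^*_\sigma$ forces $h_{w'}= \bar h_{w'}$, which is impossible for a nonzero element of $q\Z[q]$. The only cosmetic adjustment needed is for $p\in q^{\Z_{\le 0}}$ (and $p=1$), where one works instead with $b_0' = H_0 - p$ (resp.\ $H_0$, after renormalization); the last two cases of \eqref{eq:MM2} already record $M_{f\cdot \sigma}\cdot b_i' = 0$, so the inductive scheme is unaffected. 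The main obstacle is essentially clerical: keeping careful track of signs and exponents so that the correction step in the induction indeed lands the coefficients in $q\Z[q]$, and confirming that the Bruhat interval $[e,\sigma]\cap {}^fW$ is preserved under the relevant right multiplications --- both are ensured by Theorem~\ref{thm:Wf} and Proposition~\ref{prop:HMf}.
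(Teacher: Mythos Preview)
Your proposal is correct and follows exactly the approach the paper indicates: rerun the argument of Theorem~\ref{thm:CBMf} with $b_i'=H_i-q_i$ and the action formulas \eqref{eq:MM2} in place of $b_i$ and \eqref{eq:MM}. One small slip: your ``cosmetic adjustment'' for $p\in q^{\Z_{<0}}$ is misstated, since $b_0'=H_0-p$ is already the default; in that range you should instead use $b_0'=H_0+p^{-1}$ (so the correction term $p^{-1}\in q\Z[q]$), mirroring the swap made in the proof of Theorem~\ref{thm:CBMf}.
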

 The set $\{ C^*_{\sigma} | \sigma \in {}^fW \}$ is called a {\em dual canonical or dual quasi-parabolic KL basis} for $\M_f$. 

\begin{example}
\label{ex:BA}
  {\quad}
\begin{enumerate}
\item
If $f \in \Ianti$ satisfies $f(i) \in \Iwl$, for all $1\le i \le d$ (or more generally, if $k\le 1$ in \eqref{eq:f}--\eqref{eq:Wf}), then the subgroup ${}^fW$ is parabolic. In this case, the canonical basis of $\M_f$ is exactly the parabolic Kazhdan-Lusztig basis of type B \cite{KL79, De87}.

\item
If $f \in \Ianti$ satisfies $f(i) \in \Ib$, for all $1\le i \le d$, then the action of $H_0$ is given by $p \cdot \text{Id}$ on $\M_f$, and the $\HB$-module $\M_f$ essentially reduces to an $\Hy_{S_d}$-module. In this case, $W_f= B_{m_1}\times \ldots \times B_{m_k}$ with $m_1 +\ldots + m_k=d$, the canonical basis of $\M_f$ is identified with the parabolic KL basis of $\Hy_{S_d}$ associated to $(S_{m_1}\times \ldots \times S_{m_k})\backslash S_d$. (This  follows by the uniqueness of a canonical basis, since $\M_f$ as an $\HB$-module and as an $\Hy_{S_d}$-module has the same standard basis and the same bar map.)
\end{enumerate}
\end{example}

\begin{example}
 \label{ex:C}
For non-parabolic $W_f$, the canonical basis on $\M_f$ may not be a (usual) KL basis. 
Consider $\V^{\otimes 3}$ for $\V$ of dimension $5$ with standard basis $\{v_i\}_{-2\le i \le 2}$, where $\I_\bu=\{-1,0, 1\}$ (i.e., $\nb=3, r=1$ and $d=3$). 
We consider $f=(0,-1, -2)$ and $W_f =B_1 \times B_1  =\langle s_0,  s_{101}  \rangle$; here and below we shall write $s_is_js_k \cdots =s_{ijk\cdots}$. Then 
\[
{}^fW =\{e, s_1, s_2, s_{12}, s_{21}, s_{121}, s_{210}, s_{2101}, s_{1210}, 
 s_{12101}, s_{21012}, s_{121012}\}. 
\]
We have the following 12 canonical basis elements in $\M_f$ (as linear combinations of the 12 standard basis elements $M_{f\cdot \sigma}$, for $\sigma \in {}^fW$):
\begin{equation*}
\begin{aligned}
C_{f} &=M_f,
\quad  
C_{f\cdot s_1}=M_{f\cdot s_1} +q^{-1} M_f,
\quad 
C_{f\cdot s_2} =M_{f\cdot s_2} +q^{-1} M_f,\\
C_{f\cdot s_{12}}&=M_{f\cdot s_{12}}+q^{-1}M_{f\cdot s_{1}}+q^{-1}M_{f\cdot s_{2}}+q^{-2}M_{f},
 \\
C_{f\cdot s_{21}}&=M_{f\cdot s_{21}}+q^{-1}M_{f\cdot s_{2}}+q^{-1}M_{f\cdot s_{1}}+q^{-2}M_f, \\
C_{f\cdot s_{121}}&=M_{f\cdot s_{121}}+q^{-1}M_{f\cdot s_{12}}+q^{-1}M_{f\cdot s_{21}}+q^{-2}M_{f\cdot s_{1}}+q^{-2}M_{f\cdot s_{2}}+q^{-3}M_f,
 \\
C_{f\cdot s_{210}} &= M_{f\cdot s_{210}}+q^{-1}M_{f\cdot s_{21}}+q^{-2}M_{f\cdot s_{2}}+q^{-2}M_{f\cdot s_{1}}+(q^{-3}-q^{-1})M_f,
\\
C_{f\cdot s_{2101}} &= M_{f\cdot s_{2101}}+q^{-1}M_{f\cdot s_{210}}+q^{-2}M_{f\cdot s_{21}}\\
&\quad +(q^{-3}-q^{-1})M_{f\cdot s_{1}}+q^{-3}M_{f\cdot s_{2}}+(q^{-4}-q^{-2})M_f,
\\
C_{f\cdot s_{1210}} &= M_{f\cdot s_{1210}}+q^{-1}M_{f\cdot s_{210}}+q^{-1}M_{f\cdot s_{121}}+q^{-2}M_{f\cdot s_{21}}+q^{-2}M_{f\cdot s_{12}} \\
&\quad +q^{-3}M_{f\cdot s_{1}}+q^{-3}M_{f\cdot s_{2}}+q^{-4}M_f,
\\
C_{f\cdot s_{21012}}&= M_{f\cdot s_{21012}}+q^{-1}M_{f\cdot s_{2101}}+q^{-1}M_{f\cdot s_{1210}}+q^{-2}M_{f\cdot s_{210}}+q^{-2}M_{f\cdot s_{121}} \\
&\quad +q^{-3}M_{f\cdot s_{21}}+(q^{-3}-q^{-1})M_{f\cdot s_{12}}+(q^{-4}-q^{-2})M_{f\cdot s_{1}}+q^{-4}M_{f\cdot s_{2}} +q^{-5}M_f,
 \\
C_{f\cdot s_{12101}}&=M_{f\cdot s_{12101}}+q^{-1}M_{f\cdot s_{1210}}+q^{-1}M_{f\cdot s_{2101}}+q^{-2}M_{f\cdot s_{210}}+q^{-2}M_{f\cdot s_{121}}\\
&\quad +q^{-3}M_{f\cdot s_{21}}+q^{-3}M_{f\cdot s_{12}}+q^{-4}M_{f\cdot s_{2}}+q^{-4}M_{f\cdot s_{1}} +q^{-5}M_f,
 \\
C_{f\cdot s_{121012}}&= M_{f\cdot s_{121012}}+q^{-1}M_{f\cdot s_{21012}}+q^{-1}M_{f\cdot s_{12101}}+q^{-2}M_{f\cdot s_{2101}}+q^{-2}M_{f\cdot s_{1210}} \\
&\quad +q^{-3}M_{f\cdot s_{210}}+q^{-3}M_{f\cdot s_{121}}+q^{-4}M_{f\cdot s_{21}}+q^{-4}M_{f\cdot s_{12}}\\
&\quad +q^{-5}M_{f\cdot s_{2}}+q^{-5}M_{f\cdot s_{1}}+q^{-6}M_f.
\end{aligned}
\end{equation*}
Note that some polynomials in $q^{-1}$ above do not have positive coefficients in contrast to parabolic KL polynomials. Therefore, we do not expect a straightforward generalization of the geometric realization of the KL basis given in \cite{KL80}. 
\end{example}

\section{$\io$Schur duality of type AIII}
 \label{sec:duality}
 
In this section, we formulate a double centralizer property for the actions of $\Ui$ and $\HB$ on the tensor space $\V^{\otimes d}$. 

\subsection{Quantum group of type A} 

Denote the quantum integers and quantum binomial coefficients by, for $a\in \Z, k \in \N$,
\[
[a]=\frac{q^a-q^{-a}}{q-q^{-1}},
\qquad
\qbinom{a}{k} =\frac{[a] [a-1] \ldots [a-k+1]}{[k]!}.
\] 

For $r, m \in \N$ (as in the previous sections),  it is convenient to introduce 
\[
\pt=\frac{\nb}{2} \in \frac12 \N, 
\]
and denote
\[
I :=\I_{2r+2\pt-1} = \left [1-\pt -r, \pt+r-1 \right].
\]
Denote by $(a_{ij})_{i,j\in I}$ the Cartan matrix of type $A_{2r+\nb-1}$. 
For $i \neq j\in I$, let $S_{ij}(x,y)$ denote the noncommutative polynomial in two variables
\[
S_{ij}(x,y)=\sum_{s=0}^{1-a_{ij}}(-1)^s\qbinom{1-a_{ij}}{s} x^{1-a_{ij}-s}yx^s.
\]

The quantum group $\U=\U_q(\mathfrak {sl}_{2r+\nb})$ is a $\Q(q)$-algebra with generators $E_i,F_i, K_i^{\pm 1}$ ($i\in I$), subject to the standard defining relations including q-Serre relations 
\[
S_{ij}(E_i,E_j) =S_{ij}(F_i,F_j)=0,\quad  \text{ for }i\neq j \in I,
\]
 cf. \cite{Lus93, Jan95}. We define $K_\mu =\prod_{i} K_i^{a_i}$ for $\mu =\sum_i a_i i \in Y:=\Z I$. As an extension of a bar involution on $\Q(q)$ such that $\overline{q} =q^{-1}$, the bar involution $\psi$ on the algebra $\U$ is given by 
$\psi(q)=q^{-1},\ \ \psi(E_i)=E_i,\ \ \psi(F_i)=F_i,\ \ \psi(K_\mu)=K_{-\mu}.$

A comultiplication $\Delta$ on $\U$ is given by, for $i \in I,  \mu \in Y$, 
\begin{align}
\label{eq:Delta}
\Delta(E_i) =E_i\otimes 1+ K_i \otimes E_i, \,\,
\Delta(F_i) &=F_i\otimes  K_i^{-1}+1\otimes F_i,\,\,
\Delta(K_\mu) =K_\mu \otimes K_\mu.
\end{align}
The comultipication here follows \cite{Lus93}; it is consistent with \cite{BW18b} but different from the one used \cite{BW18a}. 

Denote the set of simple roots and the weight lattice for $\mathfrak {sl}_{2r+\nb}$ by 
\begin{align*}
\Pi &=\{\alpha_i=\epsilon_{i-\frac{1}{2}}-\epsilon_{i+\frac{1}{2}} \mid i\in I\},
\qquad
X =\bigoplus_{i\in \Ibw} \Z\epsilon_{i}.
\end{align*} 
Define the symmetric bilinear form on $X$, $(\cdot, \cdot): X \times X \rightarrow \Z$, such that $(\epsilon_i, \epsilon_j) =\delta_{ij}$. 

We also recall the braid group action $T_i=T_{i,+1}'': \U \rightarrow \U$ and its inverse from~\cite[5.2.1]{Lus93}, whose the action on $\U^+$ is given as follows: for  $i\neq j \in I$,  
\begin{equation}\label{eq:braid}
\begin{split}
\T_{i} (E_i) = -F_i K_{i}, \qquad
\T_{i} (E_j) &=  \sum_{r+s = - a_{ij}} (-1)^r q^{-r}_{i} E^{(s)}_i E_j E^{(r)}_i;\\ 
\T_{i}^{-1} (E_i) = - K_{i}^{-1} F_i, \qquad
\T_{i}^{-1} (E_j) &=  \sum_{r+s = - a_{ij}} (-1)^r q^{-r}_{i} E^{(r)}_i E_j E^{(s)}_i. 
\end{split}
\end{equation}
For any Weyl group element $w$, an automorphism $\T_w$ of $\U$ is defined via a reduced expression of $w$. This applies in particular to $w_0$, the longest element in the Weyl group of $\mathfrak {sl}_{2r+\nb}$. 

\subsection{$\io$Quantum group of type AIII} 
\label{notation}

Fix
\[
n =\frac{m}2 \in \frac12 \N. 
\]
We consider the Satake diagram of type AIII with $m-1 =2n-1$ black nodes and $r$ pairs of white nodes, together with a diagram involution $\tau$:
\begin{center}
\begin{tikzpicture}[scale=1, semithick]
\node (-4) [label=below:{$-n-r+1$}] at (0,0){$\circ$};
\node (-3)  at (1.3,0) {$\cdots$} ;
\node (-2) [label=below:{$-n$}] at (2.6,0){$\circ$};
\node (-1) [label=below:{$-n+1$}] at (3.9,0){$\bu$};
\node (0)  at (5.2,0){$\cdots$};
\node (1) [label=below:{$n-1$}] at (6.5,0){$\bu$};
\node (2) [label=below:{$n$}] at (7.8,0){$\circ$};
\node (3)  at (9.1,0){$\cdots$};
\node (4) [label=below:{$n+r-1$}] at (10.4,0){$\circ$};
\path (-4) edge (-3)
          (-3) edge (-2)
          (-2) edge (-1)
          (-1) edge (0)
          (0) edge (1)
          (1) edge (2)
          (2) edge (3)
          (3) edge (4);
\path (-4) edge[dashed,bend left,<->] (4)
          (-2) edge[dashed,bend left,<->] (2);
\end{tikzpicture}
\end{center}
(In case $n=0$, the black nodes are dropped; the nodes $n$ and $-n$ are identified and fixed by $\tau$.) The involution $\tau$ on $I$ sends $i \mapsto \tau(i)= -i$, for all $i$, and it induces an involution of $\U$, denoted again by $\tau$, by permuting the indices of its generators $E_i, F_i, K_i^{\pm 1}$.

Let \[
I_\bu =[1-n, n-1]
\]
be the set of all black nodes in $I$ so that
\[
I =I_\bu \cup I_\circ, \qquad \text{ where }  I_\circ :=I \backslash I_\bu.
\] 
Denote by $w_\bu$ the longest element in the Weyl group of the Levi subalgebra associated to $I_\bu$. Following \cite{BW18b}, we define 
\begin{align}
\label{eq:L}
\begin{split}
X_\imath &= X\big /\{\mu+ w_\bu \tau (\mu) \mid \mu \in X \},
\\
Y^\imath &=  \{ \nu- w_\bu \tau (\nu) \mid \nu \in Y\}.
\end{split}
\end{align}
We call an element in $X^\imath$ an $\imath$-weight and $X^\imath$ the $\imath$-weight lattice.

The $\imath$quantum group of type AIII, denoted by $\Ui$, depends on the parameters $\varsigma_i\in \Q(q)$, for $i\in I_\circ$, which satisfy the conditions $\va_i=\va_{-i}$, for $i\in I_\circ \backslash \{\pm  n \}$ \cite{Let02} (also cf. \cite{BK15, BW21}).
More precisely, $\Ui$ is the $\Q(q)$-subalgebra of $\U$ generated by $K_\mu \ (\mu\in Y^\io),\ E_i \ (i\in I_\bu)$, and
\begin{align}
\label{eq:Bi}
B_i= 
F_i+\va_iT_{w_\bu}(E_{\tau(i)}) K_i^{-1},\ \ \text{for } i\in I_\circ.
\end{align}
(In case $n=0$, $B_0$ will be allowed to take a more general form $B_0 =F_0 +\va_0 E_0 K_0^{-1} +\kappa_0 K_0^{-1}$, for an additional parameter $\kappa_0 \in \Q(q)$.)

Then $(\U, \Ui)$ forms a quantum symmetric pair of type AIII \cite{Let99, Let02} (cf. \cite{BW18a, BK19}). The algebra $\Ui$ satisfies the relations
\begin{align*}
K_\mu B_i &= q^{-(\mu,\alpha_i)} B_iK_\mu,\ \forall i\in I_\circ,  \\
K_\mu F_i &= q^{-(\mu,\alpha_i)} F_iK_\mu, \quad
K_\mu E_i = q^{(\mu,\alpha_i)} E_iK_\mu,\ \forall i\in I_\bu,  \mu \in Y^\io,
\end{align*}
and additional Serre type relations (which we shall not use explicitly in this paper). 

\subsection{$\io$Schur duality} 

In this subsection we will construct an $\io$Schur duality between type B Hecke algebra with two parameters $p,q$ and $\Ui$. To avoid considering a field extension of $\Q(q)$, we shall assume $p\in \Q(q)$. Then $\HB$ is a $\Q(q)$-algebra. The $\Q(q)$-vector space $\V=\oplus_{a\in \Ibw}\Q(q)v_a$ from \eqref{eq:V} can be identified with the natural representation of $\U$, where 
\begin{align}
  \label{eq:natural}
  \begin{split}
E_i v_a = \delta_{i+1,a} v_{a-1}, \qquad
 F_i v_a &= \delta_{i,a} v_{a+1}, 
\\
K_a v_a =q v_a, \quad
 K_a v_{a+1} &=q^{-1} v_{a+1}, \quad
K_a v_b =v_b \  (b\neq a, a+1).
\end{split}
\end{align} 


The tensor product $\V^{\otimes d}$ is naturally a $\U$-module via the comultiplication $\Delta$. 
Recall $\V^{\otimes d}$ is a right $\Hy_{B_d}$-module (and hence a right $\Hy_{S_d}$-module) from Lemma~\ref{lem:HB}. 

\begin{proposition}\label{prop:Jimbo} \cite{Jim86}
The actions of $\U$ and $\Hy_{S_d}$ on $\V^{\otimes d}$ commute with each other, and their images in $\End (\V^{\otimes d})$ form double centralizers.
\end{proposition}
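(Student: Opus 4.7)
The plan is to treat this as the classical $q$-Schur duality of Jimbo, whose proof has two essentially independent parts: (1) commutation of the two actions on $\V^{\otimes d}$, and (2) the double centralizer property.

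For part (1), I would verify directly on generators that $H_i$ (acting on the $i$-th and $(i+1)$-st tensor factors) commutes with $E_j, F_j, K_\mu$ for all $j \in I$ and $\mu \in Y$. Since both $H_i$ and the $\U$-action via $\Delta$ act trivially away from the two factors $i,i+1$ (respectively, away from the Cartan action on the other factors), it suffices to reduce to $d=2$ and check that $H_1 \in \End(\V \otimes \V)$ commutes with $\Delta(x)$ for $x \in \{E_j, F_j, K_\mu\}$. This is a finite case-by-case verification using the explicit formulas from Lemma~\ref{lem:HB} and \eqref{eq:Delta}, \eqref{eq:natural}; equivalently, one recognizes $H_i$ as (a normalization of) Jimbo's $R$-matrix on the $i,i+1$ factors, and the commutation is the statement that this $R$-matrix is a $\U$-module intertwiner $\V \otimes \V \to \V \otimes \V$.

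For part (2), the double centralizer property, I would argue via semisimplicity and dimension counting. At generic $q$, $\V^{\otimes d}$ is a semisimple $\U$-module; writing
\[
\V^{\otimes d} \cong \bigoplus_{\lambda} L(\lambda) \otimes M_\lambda
\]
as a $\U$-module, where $\lambda$ ranges over partitions of $d$ with at most $2r+m$ parts, $L(\lambda)$ denotes the irreducible $\U$-module of highest weight $\lambda$, and $M_\lambda = \mathrm{Hom}_\U(L(\lambda), \V^{\otimes d})$ is the multiplicity space. Part (1) gives a homomorphism $\Hy_{S_d} \to \bigoplus_\lambda \End(M_\lambda)$. To show surjectivity (and hence the double centralizer property with respect to the $\U$-side), I would combine the computation $\dim M_\lambda = f^\lambda$ (the number of standard Young tableaux of shape $\lambda$) coming from the classical limit at $q \to 1$, with the Wedderburn decomposition of $\Hy_{S_d}$ at generic $q$: the sum $\sum_\lambda (f^\lambda)^2$ of squared dimensions of the Specht modules equals $d! = \dim \Hy_{S_d}$, forcing the map to $\bigoplus_\lambda \End(M_\lambda)$ to be injective (it lifts from $q=1$) and then also surjective after matching dimensions. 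This simultaneously yields the centralizer statement in both directions, since each isotypic component is already an irreducible bimodule.

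The main obstacle is controlling the specialization $q \to 1$ cleanly: one must ensure that the generic $\Hy_{S_d}$-module structure on each $M_\lambda$ deforms the classical $S_d$-module (the Specht module) without dimension jumps, which is standard but requires setting up an integral form or working over $\Q(q)$ and comparing with Schur-Weyl duality at $q=1$. An alternative that avoids specialization is to produce highest weight vectors in $\V^{\otimes d}$ indexed by semistandard tableaux and to show directly that $\Hy_{S_d}$ acts transitively on a basis of each $M_\lambda$; this is more combinatorially involved but self-contained. Since this proposition is invoked as a stepping stone toward Theorem~\ref{thm:B}, I expect the paper to cite Jimbo and proceed immediately to extending the duality by incorporating the extra generator $H_0$ and the subalgebra $\Ui$.
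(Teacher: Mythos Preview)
Your anticipation is exactly right: the paper gives no proof of this proposition at all, simply citing \cite{Jim86} in the statement and moving on to extend the duality via $H_0$ and $\Ui$ in Theorem~\ref{thm:UiHB}. Your two-part sketch (commutation via the $R$-matrix intertwiner on two tensor factors, then double centralizer via semisimplicity and specialization to $q=1$) is the standard argument and is correct as an outline; there is nothing to compare against in the paper itself.
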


We shall compute explicitly the action of $B_i$, for $i\in I_\circ$, on $\V$ in the following 2 lemmas. Recall $m=2n \in \N$. 

\begin{lemma}\label{lem:Tw} 
For $a\in \Ibw$ and $i \in I_\circ =[1-n-r, -n] \cup [n, n+r-1]$, we have
\[
T_{w_\bu}(E_{\tau(i)})(v_a)=\left\{\begin{aligned}
&E_{-i}(v_a), &|i| > \pt;\\
&E_{-\pt+1}E_{-\pt+2}\cdots E_{\pt-1}E_{\pt}(v_a),&i=-\pt; \\
&(-1)^{\nb-1}q^{-\nb+1}E_{-\pt}E_{-\pt+1}\cdots E_{\pt-2}E_{\pt-1}(v_a),& i=\pt.
\end{aligned}\right.
\]
\end{lemma}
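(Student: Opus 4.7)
The plan is to treat the three cases separately. In the case $|i| > n$, observe that $\tau(i) = -i$ is non-adjacent in the Dynkin diagram to every node of $I_\bu = [1-n, n-1]$ (the gap is at least $2$). Consequently $a_{j,-i} = 0$ for all $j \in I_\bu$, and the case $a_{ij} = 0$ of formula \eqref{eq:braid} specializes to $T_{s_j}(E_{-i}) = E_{-i}$. Iterating this over any reduced expression of $w_\bu$ gives $T_{w_\bu}(E_{-i}) = E_{-i}$, which upon action on $v_a$ yields the first formula.

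For $i = \pm n$ I would first simplify $T_{w_\bu}$ by choosing a well-adapted reduced expression. Using the standard factorization of the longest element of type $A_{m-1}$, write $w_\bu = (s_{1-n} s_{2-n} \cdots s_{n-1}) \cdot w'$ with $w'$ the longest element of $\langle s_{1-n}, \ldots, s_{n-2} \rangle$, and dually $w_\bu = (s_{n-1} s_{n-2} \cdots s_{1-n}) \cdot w''$ with $w''$ the longest element of $\langle s_{2-n}, \ldots, s_{n-1} \rangle$. All generators of $w'$ are non-adjacent to the node $n$, and all generators of $w''$ are non-adjacent to $-n$; so the first-case argument applied to $T_{w'}$ and $T_{w''}$ gives $T_{w'}(E_n) = E_n$ and $T_{w''}(E_{-n}) = E_{-n}$. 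This reduces the problem to two iterated braid products: $T_{w_\bu}(E_n) = T_{s_{1-n}} T_{s_{2-n}} \cdots T_{s_{n-1}}(E_n)$ for $i = -n$, and $T_{w_\bu}(E_{-n}) = T_{s_{n-1}} T_{s_{n-2}} \cdots T_{s_{1-n}}(E_{-n})$ for $i = n$.

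These two chains are evaluated on $v_a$ by induction along the chain, via the module-level identity $T_{s_k}(X)(v) = T_{s_k}(X(T_{s_k}^{-1}(v)))$ together with the fact that Lusztig's braid operators $T_{s_k}^{\pm 1}$ act trivially on every weight vector of $\V$ outside the $2$-dimensional subspace $\langle v_{k-1/2}, v_{k+1/2} \rangle$. For $i = -n$, the input $v_{n+1/2}$ is untouched by every $T_{s_k}^{-1}$ along the chain, and the intermediate low-weight vector $v_{k+1/2}$ produced at step $k$ is mapped to $v_{k-1/2}$ by $T_{s_k}$ with coefficient $1$, so the coefficients collapse to $1$ overall, delivering the product formula $E_{1-n} E_{2-n} \cdots E_n(v_a)$. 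For $i = n$, the base step $T_{s_{1-n}}(E_{-n})(v_{3/2 - n}) = -q^{-1} v_{-n-1/2}$ uses that only $-q^{-1} E_{-n} E_{1-n}$ survives on $v_{3/2 - n}$ (since $E_{-n} v_{3/2 - n} = 0$), and each subsequent inductive step picks up an additional factor from the Lusztig symmetry $T_{s_k}^{-1}$ acting on the arising low-weight vector; iterating over the $m-1$ steps accumulates the claimed scalar $(-1)^{m-1} q^{-m+1}$. The main obstacle will be the careful bookkeeping of sign and $q$-power contributions of $T_{s_k}^{\pm 1}$ on low- versus high-weight vectors in each $\mathfrak{sl}_2$-triple of $\V$; this asymmetry is exactly what produces the nontrivial scalar in the $i = n$ case while leaving the $i = -n$ coefficient equal to $1$.
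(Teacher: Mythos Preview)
Your proposal is correct and close in spirit to the paper's proof, but the induction is organized differently. The paper also reduces to the chains $T_{s_{1-n}}\cdots T_{s_{n-1}}(E_n)(v_a)$ and $T_{s_{n-1}}\cdots T_{s_{1-n}}(E_{-n})(v_a)$ via the same factorization of $w_\bu$; however, instead of your module-level identity $T_{s_k}(X)(v)=T_{s_k}\bigl(X\,T_{s_k}^{-1}(v)\bigr)$ and step-by-step tracking of $T_{s_k}^{\pm1}$ on basis vectors, the paper peels off one $T_{s_k}$ at a time at the \emph{algebra} level using \eqref{eq:braid}, e.g.\ $T_{s_{n-1}}(E_n)=E_{n-1}E_n-q^{-1}E_nE_{n-1}$, and then kills the unwanted summand by the observation that $E_nE_{n-1}$ (resp.\ $E_{1-n}E_{-n}$) annihilates every $v_b$ in the natural representation. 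This yields a recursion in $n$ rather than along the chain.

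Both arguments ultimately exploit the same feature of $\V$: its weight spaces are one-dimensional, so each $T_{s_k}^{\pm1}(v_a)$ is a scalar multiple of a single $v_b$. Your bookkeeping claim is accurate in Lusztig's convention $T_i=T''_{i,+1}$: on the two-dimensional $s_k$-string one has $T_{s_k}(v_{k+1/2})=v_{k-1/2}$ and $T_{s_k}^{-1}(v_{k+1/2})=-q^{-1}v_{k-1/2}$, which is exactly the asymmetry producing the trivial scalar for $i=-n$ and $(-q^{-1})^{m-1}$ for $i=n$. One small point you leave implicit: for $a\neq n+\tfrac12$ (resp.\ $a\neq n-\tfrac12$) both sides vanish by weight reasons, since $T_{w_\bu}(E_{\tau(i)})$ is a root vector for $\epsilon_{1/2-n}-\epsilon_{n+1/2}$ (resp.\ $\epsilon_{-n-1/2}-\epsilon_{n-1/2}$); it is worth stating this so the identity holds for all $a\in\Ibw$ as claimed.
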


\begin{proof}
For $i<-\pt$ and $i>\pt$, we have $T_{w_\bu}(E_{\tau(i)})=E_{-i}$. 

Let $i=-\pt$. We choose the following reduced expression of $w_\bu$: 
$$
w_\bu=(s_{-\pt+1}s_{-\pt+2}\cdots s_{\pt-1}) (s_{-\pt+1}s_{-\pt+2}\cdots s_{\pt-2})\cdots(s_{-\pt+1}s_{-\pt+2})(s_{-\pt+1}).
$$
Thus we compute
\begin{align}
T_{w_\bu}(E_{\tau(-\pt)})(v_a)&=T_{s_{-\pt+1}}\cdots T_{s_{\pt-1}}(E_{\pt})(v_a)
 \label{eq:Tva} \\
&=T_{s_{-\pt+1}}\cdots T_{s_{\pt-2}} (E_{\pt-1}E_{\pt}-q^{-1}E_{\pt}E_{\pt-1})v_a
 \notag \\
&=T_{s_{-\pt+1}}\cdots T_{s_{\pt-2}}(E_{\pt-1})E_{\pt}(v_a) -q^{-1} T_{s_{-\pt+1}}\cdots T_{s_{\pt-2}} (E_{\pt}E_{\pt-1})v_a. 
\notag
\end{align}
The second term on the RHS \eqref{eq:Tva} vanishes since $T_w (E_{\pt}E_{\pt-1})v_a = z T_w (E_{\pt}E_{\pt-1} v_{w(a)})$, for some scalar $z$, and $E_{\pt}E_{\pt-1} v_{w(a)} =0$ by \eqref{eq:natural}, for any $w, a$. Thus we derive that 
$$
T_{w_\bu}(E_{\tau(-\pt)})(v_a)=T_{s_{-\pt+1}}\cdots T_{s_{\pt-1}}(E_{\pt})(v_a)
=T_{s_{-\pt+1}}\cdots T_{s_{\pt-2}}(E_{\pt-1})E_{\pt}(v_a).
$$
Hence by a simple induction on $n$ we obtain
$$T_{w_\bu}(E_{\tau(-\pt)})(v_a)=E_{-\pt+1}E_{-\pt+2}\cdots E_{\pt-1}E_{\pt}(v_a).$$

Similarly, using another reduced expression
\[
w_\bu=(s_{\pt-1}s_{\pt-2}\cdots s_{-\pt+1})\cdots(s_{\pt-1}s_{\pt-2})(s_{\pt-1}),
\]
we compute $T_{w_\bu}(E_{\tau(\pt)})(v_a)$ as follows:
\begin{align*}
T_{w_\bu}(E_{\tau(\pt)})(v_a)&=T_{s_{\pt-1}}\cdots T_{s_{-\pt+1}}(E_{-\pt})(v_a)\\
&= T_{s_{\pt-1}}\cdots T_{s_{-\pt+2}}(E_{-\pt+1}E_{-\pt}-q^{-1}E_{-\pt}E_{-\pt+1})v_a \\
&= -q^{-1}E_{-\pt}T_{s_{\pt-1}}\cdots  T_{s_{-\pt+2}}(E_{-\pt+1})(v_a).
\end{align*}
Again by induction on $n$, recalling $m=2n$ we have
$$T_{w_\bu}(E_{\tau(\pt)})(v_a)
= (-1)^{\nb-1}q^{-\nb+1} E_{-\pt}E_{-\pt+1} \cdots E_{\pt-2}E_{\pt-1}(v_a).
$$
The lemma is proved. 
\end{proof}

Lemma~\ref{lem:Tw} together with the formula for $B_i$ in \eqref{eq:Bi} immediate imply the following. 
\begin{lemma} 
 \label{lem:Bi}
Let $a\in \Ibw$ and $i \in I_\circ$. The action of $B_i$ on $\V$ is given by:
$$
B_{-\pt}(v_a)=
\begin{cases}
v_{-\pt+\frac{1}{2}}, & \text{ if } a=-\pt-\frac{1}{2};\\
\va_{-\pt}v_{-\pt+\frac{1}{2}}, & \text{ if } a=\pt+\frac{1}{2};\\
0, &else,
\end{cases}
$$
$$
B_i(v_a)=
\begin{cases}
v_{i+\frac{1}{2}}, & \text{ if } a=i-\frac{1}{2};\\
\va_iv_{-i-\frac{1}{2}}, & \text{ if } a=-i+\frac{1}{2};\\
0, &else,\\
\end{cases}
\qquad \text{ for } |i| > \pt,
$$
and (recall $m=2n$)
$$
B_{\pt}(v_a)=
\begin{cases}
v_{\pt+\frac{1}{2}}+(-1)^{\nb-1}q^{-\nb}\va_{\pt}v_{-\pt-\frac{1}{2}}, & \text{ if } a=\pt-\frac{1}{2};
 \\
0,  &else.
\end{cases}
$$
\end{lemma}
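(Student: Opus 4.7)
The plan is essentially a direct computation, using \eqref{eq:Bi} to split $B_i = F_i + \va_i T_{w_\bu}(E_{\tau(i)}) K_i^{-1}$, substituting Lemma~\ref{lem:Tw} for the middle factor in each of the three cases, and then applying the explicit formulas \eqref{eq:natural} for the $\U$-action on the natural representation $\V$. The $F_i$-contribution is always $\delta_{a, i-\frac12} v_{i+\frac12}$ (allowing a half-integer shift convention matching the indexing of $\Ibw$), so the non-trivial content is in the evaluation of $T_{w_\bu}(E_{\tau(i)}) K_i^{-1} v_a$.

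For $|i|>\pt$, Lemma~\ref{lem:Tw} gives $T_{w_\bu}(E_{\tau(i)}) = E_{-i}$; then $E_{-i}$ is nonzero only on $v_{-i+\frac12}$, producing $v_{-i-\frac12}$, and one checks that $K_i^{-1}$ acts trivially on $v_{-i+\frac12}$ since $-i+\frac12 \notin \{i\pm \frac12\}$ for $|i|>n\ge 0$. For $i=-\pt$, I would apply the product $E_{-\pt+1}E_{-\pt+2}\cdots E_{\pt-1}E_{\pt}$ successively to $v_a$: the rightmost $E_\pt$ requires $a=\pt+\frac12$ and outputs $v_{\pt-\frac12}$, then each successive $E_j$ decreases the index by $1$, ending at $v_{-\pt+\frac12}$. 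Again $K_{-\pt}^{-1}$ acts trivially on $v_{\pt+\frac12}$ (the weight $\pt+\frac12$ is orthogonal to $\alpha_{-\pt}=\epsilon_{-\pt-\frac12}-\epsilon_{-\pt+\frac12}$).

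The remaining case $i=\pt$ is where the $(-1)^{m-1}q^{-m}$ factor has to be extracted carefully. Lemma~\ref{lem:Tw} gives the prefactor $(-1)^{\nb-1}q^{-\nb+1}$ times the product $E_{-\pt}E_{-\pt+1}\cdots E_{\pt-2}E_{\pt-1}$. The rightmost $E_{\pt-1}$ forces $a=\pt-\frac12$, and the successive $E_j$'s sweep down to produce $v_{-\pt-\frac12}$. The key extra contribution comes from $K_\pt^{-1}$: since $K_\pt$ acts on $v_{\pt-\frac12}$ by $q^{(\alpha_\pt,\epsilon_{\pt-\frac12})} = q$, we have $K_\pt^{-1} v_{\pt-\frac12} = q^{-1} v_{\pt-\frac12}$, and combining $q^{-\nb+1}\cdot q^{-1}=q^{-\nb}$ gives the stated coefficient. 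Adding the $F_\pt v_{\pt-\frac12}=v_{\pt+\frac12}$ contribution then yields the formula.

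The only mild obstacle is routine bookkeeping of the half-integer index convention for $\Ibw$ versus the integer labeling of the simple roots, and correctly tracking the $K_i^{-1}$-eigenvalue in the third case; once those are settled, the lemma follows by inspection from Lemma~\ref{lem:Tw} and \eqref{eq:natural}.
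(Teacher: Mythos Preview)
Your proof is correct and follows exactly the approach the paper has in mind: the paper simply states that the lemma is immediate from Lemma~\ref{lem:Tw} together with the formula \eqref{eq:Bi}, and what you have written is precisely the routine unpacking of that claim. Your bookkeeping on the $K_i^{-1}$-eigenvalues, in particular the observation that $K_n^{-1}v_{n-\frac12}=q^{-1}v_{n-\frac12}$ accounts for the extra factor $q^{-1}$ turning $q^{-m+1}$ into $q^{-m}$, is exactly the point one has to check.
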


From now on, we shall fix the parameters to be 
 \begin{align}
   \label{eq:para}
\left\{ 
\begin{aligned}
\va_i &=1,\ \text{ if } i\neq \pm\pt, \\
\va_{-\pt} &=p,  \qquad\qquad\qquad\qquad\qquad \text{ if } m=2n \in \Z_{\ge 1}, \\
 \va_{\pt} &=(-1)^{\nb-1}q^{\nb}p^{-1},  
 \end{aligned}
\right.
\end{align}
and
 \begin{align}
   \label{eq:para0}
\left\{ 
\begin{aligned}
\va_i &=1,\ \text{ if } i\neq 0, \\
\va_{0} &=q^{-1},  \qquad\qquad\qquad\qquad\qquad \text{ if } m=0.\\
 \kappa_0 &= \frac{p-p^{-1}}{q-q^{-1}}, 
 \end{aligned}
\right.
\end{align}
That is, for $m=0$, we take $B_0 =F_0 + q^{-1} E_0 K_0^{-1} +\frac{p-p^{-1}}{q-q^{-1}} K_0^{-1}$, following \cite{BWW18}.  

Introduce the $\Q(q)$-subspaces of $\V$:
\begin{align*}
\V_- &=\bigoplus_{a\in \Iwr}\Q(q)(v_{a} -pv_{-a}),\qquad
 \V_\bu =\bigoplus_{a\in \I_\bu}\Q(q) v_a,\\
 \V_+ &=\bigoplus_{a\in \Iwr}\Q(q)(v_{a}+p^{-1}v_{-a}).
\end{align*}

\begin{lemma}
  \label{lem:Vpm}
Assume \eqref{eq:para}--\eqref{eq:para0}. 
Then $\V_-$ and $\V_\bu \oplus \V_+$ are $\Ui$-submodules of $\V$. 
Hence, we have a $\Ui$-module decomposition $\V= (\V_\bu\oplus \V_+) \oplus \V_-$.
\end{lemma}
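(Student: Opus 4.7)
The plan is to verify directly that each generator of $\Ui$ stabilizes both $\V_-$ and $\V_\bu\oplus\V_+$; the vector space decomposition $\V=(\V_\bu\oplus\V_+)\oplus\V_-$ then follows from a dimension count ($r+m+r=2r+m=\dim\V$) together with the observation that any element of $(\V_\bu\oplus\V_+)\cap\V_-$ would have to satisfy $-p\,c_a=p^{-1}c_a$ for each $a\in\Iwr$, forcing $c_a=0$ since $p+p^{-1}\ne 0$ in $\Q(q)$.

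For the Cartan part, write $\mu=\nu-w_\bu\tau(\nu)\in Y^\io$. Since $w_\bu$ permutes only the coordinates $\epsilon_b$ with $b\in\Ib$ and fixes $\epsilon_{\pm a}$ for $a\in\Iwr$, and since $(\cdot,\cdot)$ is $\tau$-invariant with $\tau(\epsilon_j)=-\epsilon_{-j}$ (the sign forced by $\tau(\alpha_i)=\alpha_{-i}$), a short computation yields $(\mu,\epsilon_a-\epsilon_{-a})=0$. Hence $K_\mu$ has the same eigenvalue on $v_a$ and $v_{-a}$, so it preserves each of $\V_\pm$ and obviously $\V_\bu$. For $E_i$ with $i\in I_\bu$, formula \eqref{eq:natural} shows $E_i$ is nonzero on at most one basis vector, and both the source and image lie in $\Ib$; thus $E_i(\V_\bu)\subset\V_\bu$ and $E_i$ annihilates $\V_-$ and $\V_+$.

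The heart of the proof is the case analysis for $B_i$ with $i\in I_\circ$, using Lemma~\ref{lem:Bi} together with the parameter choices \eqref{eq:para}--\eqref{eq:para0}. When $|i|>\pt$ (so $\va_i=1$), the lemma pairs the two nonzero images across $\Iwl/\Iwr$ with matching coefficients, from which $B_i(v_a\mp p^{\pm 1}v_{-a})$ is directly seen to be either zero or another generator of $\V_\mp$. When $i=-\pt$ with $m\ge 1$ (so $\va_{-\pt}=p$), one computes $B_{-\pt}(v_{\pt+1/2}-p v_{-\pt-1/2})=p v_{-\pt+1/2}-p v_{-\pt+1/2}=0$ and $B_{-\pt}(v_{\pt+1/2}+p^{-1}v_{-\pt-1/2})=(p+p^{-1})v_{-\pt+1/2}\in\V_\bu$, with every other spanning combination killed; so $B_{-\pt}$ kills $\V_-$ and sends $\V_\bu\oplus\V_+$ into $\V_\bu$. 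When $i=\pt$ with $m\ge 2$ (so $\va_\pt=(-1)^{m-1}q^m p^{-1}$), the scalar $(-1)^{m-1}q^{-m}\va_\pt$ appearing in Lemma~\ref{lem:Bi} collapses to $p^{-1}$, whence $B_\pt v_{\pt-1/2}=v_{\pt+1/2}+p^{-1}v_{-(\pt+1/2)}\in\V_+$ and $B_\pt$ annihilates every other basis vector; so $B_\pt$ kills $\V_-\oplus\V_+$ and sends $\V_\bu$ into $\V_+$.

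The quasi-split limit $m=0$ requires a separate direct calculation, since $B_0=F_0+q^{-1}E_0K_0^{-1}+\kappa_0 K_0^{-1}$ with $\kappa_0=\tfrac{p-p^{-1}}{q-q^{-1}}$: on $v_{\pm 1/2}$ one checks algebraically that the coefficients of $v_{1/2}$ and $v_{-1/2}$ in $B_0(v_{1/2}\mp p^{\pm 1}v_{-1/2})$ sit in the ratio $\mp p^{\pm 1}$ required to land in $\V_\mp$, and on $v_a$ with $|a|>1/2$ the operator acts simply as the scalar $\kappa_0$. The main obstacle throughout is this boundary analysis at $i=\pm\pt$ (and $i=0$ when $m=0$): the parameter choices \eqref{eq:para}--\eqref{eq:para0} are calibrated precisely so that the $F_i$ contribution and the $\va_iT_{w_\bu}(E_{\tau(i)})K_i^{-1}$ contribution (together with the $\kappa_0 K_0^{-1}$ piece in the quasi-split limit) combine either to annihilate the $\V_-$ direction or to implement the transfer $\V_\bu\leftrightarrow\V_+$; once Lemma~\ref{lem:Bi} is in hand the remainder is pure bookkeeping.
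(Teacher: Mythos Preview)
Your proposal is correct and follows exactly the paper's approach: the paper's entire proof reads ``Follows by a direct computation using the formulas \eqref{eq:natural} and Lemma~\ref{lem:Bi},'' and your case analysis (Cartan part, $E_i$ for $i\in I_\bu$, and $B_i$ for $i\in I_\circ$ split into $|i|>\pt$, $i=\pm\pt$, and the quasi-split limit $m=0$) is precisely that computation spelled out. The only minor omission is the parallel check for $F_i$ with $i\in I_\bu$ (which the paper also lists among the relations for $\Ui$), but this is handled identically to $E_i$ since $F_i$ likewise acts entirely within $\V_\bu$ and annihilates $\V_\pm$.
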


\begin{proof}
Follows by a direct computation using the formulas \eqref{eq:natural} and Lemma~\ref{lem:Bi}. 
\end{proof}

The decomposition of $\V$ above is also compatible with the $H_0$-action. 

\begin{lemma}
 \label{lem:H0}
The Hecke generator $H_0$ acts on $\V_-$ as $(-p^{-1} ) \text{Id}$ and acts on $\V_\bu \oplus \V_+$ as $p\cdot \text{Id}$.
\end{lemma}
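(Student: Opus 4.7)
The plan is to verify the claim by a direct computation, using the explicit formulas for the action of $H_0$ on $\V$ given in Lemma~\ref{lem:HB} (specialized to $d=1$). No deeper structural input is required; the key is simply to check that the basis vectors of $\V_-$, $\V_\bu$, and $\V_+$ are eigenvectors of $H_0$ with the asserted eigenvalues. By Lemma~\ref{lem:Vpm}, the decomposition $\V = \V_\bu \oplus \V_+ \oplus \V_-$ respects the $\Ui$-action, and since $H_0$ commutes with $\Ui$ (which will be part of Theorem~\ref{thm:B}), it suffices to check the eigenvalue assertion on the chosen spanning vectors.

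First, I would handle $\V_\bu$: for $a \in \I_\bu$, Lemma~\ref{lem:HB} gives $v_a \cdot H_0 = p v_a$ directly, which is the desired eigenvalue on this summand. Next, for $a \in \Iwr$, I would compute
\begin{align*}
(v_a - p v_{-a}) \cdot H_0 &= \bigl(v_{-a} + (p-p^{-1}) v_a\bigr) - p \, v_a \\
&= v_{-a} - p^{-1} v_a = -p^{-1}(v_a - p v_{-a}),
\end{align*}
where the first line uses the $\Iwr$-formula on $v_a$ and the $\Iwl$-formula on $v_{-a}$ (both from Lemma~\ref{lem:HB}). This gives the eigenvalue $-p^{-1}$ on $\V_-$. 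Similarly, for $\V_+$ I would compute
\begin{align*}
(v_a + p^{-1} v_{-a}) \cdot H_0 &= \bigl(v_{-a} + (p-p^{-1}) v_a\bigr) + p^{-1} v_a \\
&= v_{-a} + p v_a = p(v_a + p^{-1} v_{-a}),
\end{align*}
confirming the eigenvalue $p$ on $\V_+$.

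There is essentially no obstacle here beyond bookkeeping: the lemma is a case-by-case calculation, and the only subtlety is keeping track of the sign flip $s_0 \cdot a = -a$ when $a \in \Iwl \cup \Iwr$ (so that the $H_0$-formula for $v_a$ with $a \in \Iwr$ produces $v_{-a}$ as the first term, while the formula for $v_{-a}$ with $-a \in \Iwl$ produces $v_a$). Combining the three computations yields the lemma. This computation makes clear why the sign $-p^{-1}$ on $\V_-$ and the sign $p$ on $\V_\bu \oplus \V_+$ match the two eigenvalues $p, -p^{-1}$ of $H_0$ prescribed by the quadratic relation $(H_0 - p)(H_0 + p^{-1}) = 0$, and it identifies $\V_-$ (resp.\ $\V_\bu \oplus \V_+$) as the $(-p^{-1})$-eigenspace (resp.\ $p$-eigenspace) of $H_0$ acting on $\V$.
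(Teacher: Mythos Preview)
Your proof is correct and follows the same approach as the paper, which simply says ``Follows by Lemma~\ref{lem:HB}''; you have spelled out the direct computation in full. One small remark: your appeal to the commutativity of $H_0$ with $\Ui$ (part of Theorem~\ref{thm:UiHB}) is both unnecessary and circular, since Lemma~\ref{lem:H0} is used in the proof of that theorem; fortunately your actual argument does not rely on it, as checking $v\cdot H_0=\lambda v$ on a spanning set of each subspace already establishes the scalar action without any prior knowledge of $H_0$-invariance.
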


\begin{proof}
Follows by Lemma~\ref{lem:HB}. 
\end{proof}

\begin{theorem}\label{thm:UiHB}
Suppose the parameters satisfy \eqref{eq:para}--\eqref{eq:para0}. 
Then the actions of $\Ui$ and $\HB$ on $\V^{\otimes d}$ commutes with each other: 
\[
\Ui \stackrel{\Psi}{\curvearrowright} \V^{\otimes d} \stackrel{\Phi}{\curvearrowleft} \Hy_{B_d}. 
\]
Moreover, $\Psi(\Ui)$ and $\Phi(\HB)$ form double centralizers in $\End (\V^{\otimes d})$.
\end{theorem}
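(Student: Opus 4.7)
The plan is first to verify, on generators, that $\Psi(x)\Phi(H_j) = \Phi(H_j)\Psi(x)$ for all $x \in \Ui$ and $0 \le j \le d-1$. For $j \ge 1$, the subalgebra $\Hy_{S_d} \subset \HB$ generated by $H_1, \ldots, H_{d-1}$ already commutes with the full $\U$-action by Jimbo's duality (Proposition~\ref{prop:Jimbo}), hence with the restricted $\Ui$-action. So only commutation with $H_0$ needs separate treatment.

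\textbf{Commutation with $H_0$.} Here I would exploit the coideal property $\Delta(\Ui) \subseteq \Ui \otimes \U$, whose iteration yields $\Delta^{(d-1)}(\Ui) \subseteq \Ui \otimes \U^{\otimes(d-1)}$. Consequently, every element $\Psi(x)$ for $x \in \Ui$ acts on $\V^{\otimes d}$ via an operator whose first tensor factor lies in the image of $\Ui \to \End(\V)$. Since $\Phi(H_0)$ operates only on the first tensor factor, commutation on $\V^{\otimes d}$ reduces to commutation of $\Ui$ with $H_0$ on $\V$ itself. Under the parameter choices~\eqref{eq:para}--\eqref{eq:para0}, Lemma~\ref{lem:Vpm} supplies a $\Ui$-module decomposition $\V = (\V_\bu \oplus \V_+) \oplus \V_-$, and Lemma~\ref{lem:H0} shows that $H_0$ acts on each summand by a scalar ($p$ and $-p^{-1}$ respectively). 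Thus $H_0$ centralizes $\Ui$ on $\V$, and commutativity on $\V^{\otimes d}$ follows. This completes the first assertion of the theorem.

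\textbf{Double centralizer.} For the double-centralizer property, I would combine a dimension count with injectivity of $\Phi$. Since $\End_{\HB}(\V^{\otimes d}) \subseteq \End_{\Hy_{S_d}}(\V^{\otimes d}) = \Psi(\U)$ by Jimbo, the identity $\Psi(\Ui) = \End_{\HB}(\V^{\otimes d})$ is equivalent to identifying the subspace of $\Psi(\U)$ commuting with $\Phi(H_0)$ with $\Psi(\Ui)$. One inclusion is the commutativity just proved; for the reverse, a rigidity argument is needed: the two $H_0$-eigenspaces on the first tensor factor force any $\U$-endomorphism commuting with $H_0$ to be assembled from the $\imath$-generators $B_i$ in~\eqref{eq:Bi} with precisely the parameter values fixed in~\eqref{eq:para}--\eqref{eq:para0}. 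For a quantitative version of the same argument, one matches $\dim \End_{\HB}(\V^{\otimes d})$ against $\dim \Psi(\Ui)$: the former equals the dimension of the $q$-Schur algebra $\sch(r|m|r,d)$ coming from the Hom-space basis of Dipper-James-Mathas~\cite{DJM98} and Du-Scott~\cite{DS00}, while the latter can be read off from a PBW-type basis of $\Ui$ modulo the annihilator on $\V^{\otimes d}$.

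\textbf{Injectivity and main obstacle.} The reverse identity $\Phi(\HB) = \End_{\Ui}(\V^{\otimes d})$ then follows from the first together with injectivity of $\Phi$, which in turn is visible from Lemma~\ref{2b}: on a cyclic $\M_f$ for generic $f \in \Ianti$, the images $M_f \cdot H_\sigma = M_{f\cdot \sigma}$ are distinct standard-basis elements, so the $\Phi(H_\sigma)$ are $\Q(q)$-linearly independent in $\End(\V^{\otimes d})$, forcing $\Phi$ to be injective; standard semisimplicity arguments at generic $(p,q)$ then promote one centralizer identity to both. The main obstacle, in my view, is the rigidity step above: one must pin down that the only $\U$-endomorphisms of $\V^{\otimes d}$ commuting with the block-scalar action of $H_0$ on the first factor are those arising from $\Ui$ with exactly the parameters~\eqref{eq:para}--\eqref{eq:para0}. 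The extreme cases $r=0$ and $m \le 1$ reduce respectively to Jimbo's duality and to the quasi-split $\imath$Schur duality of~\cite{BW18a, BWW18}, and the plan is to interpolate between these two known endpoints by exploiting the finer decomposition $\V = \V_\bu \oplus \V_+ \oplus \V_-$ and the mixed scalar behavior of $H_0$.
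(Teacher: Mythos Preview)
Your commutativity argument is essentially the paper's: Jimbo handles $H_1,\ldots,H_{d-1}$, the coideal property reduces $H_0$ to $d=1$, and Lemmas~\ref{lem:Vpm}--\ref{lem:H0} finish. Good.

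The double centralizer plan, however, diverges from the paper and carries a genuine gap. Your injectivity claim for $\Phi$ is false in general: when $r=0$ the operator $H_0$ acts as $p\cdot\text{Id}$ on all of $\V^{\otimes d}$, so $H_0-p\in\ker\Phi$ and $\Phi$ is not injective. More broadly, the existence of a ``generic'' $f\in\Ianti$ with trivial stabilizer $W_f$ requires $r\ge d$ (any entry of $f$ lying in $\Ib$ forces a nontrivial $W_1$ factor in $W_f$), so Lemma~\ref{2b} does not yield injectivity for small $r$. Since the theorem is stated for all $r,m$, this breaks your route to $\Phi(\HB)=\End_{\Ui}(\V^{\otimes d})$. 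Your alternative ``rigidity'' and dimension-count sketches are also left unresolved; computing $\dim\Psi(\Ui)$ via a PBW basis modulo the annihilator on $\V^{\otimes d}$ is not easier than the original problem.

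The paper sidesteps all of this with a one-line deformation argument: the double centralizer property is equivalent to a multiplicity-free decomposition of $\V^{\otimes d}$ as a $\Ui\otimes\HB$-module, and by flatness this reduces to $q\mapsto 1$, where $\Ui$ degenerates to $U(\mathfrak{sl}(r+m)\oplus\mathfrak{gl}(r))$ and $\V=(\V_\bu\oplus\V_+)\oplus\V_-$ becomes its natural representation, with $s_0$ acting as $(\text{Id},-\text{Id})$. The multiplicity-free decomposition at $q=1$ is then classical Schur--Weyl for a product of general linear groups paired with $W_d$, indexed by bipartitions $(\lambda,\mu)$ with $\ell(\lambda)\le r+m$, $\ell(\mu)\le r$, $|\lambda|+|\mu|=d$. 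This argument is uniform in $r,m$ and avoids any injectivity hypothesis on $\Phi$.
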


\begin{proof}
As the case for $m=0$ was covered in \cite{BWW18}, we shall assume $m\ge 1$ below. 

By the Jimbo duality (see Proposition~\ref{prop:Jimbo}), we know that the action of $\U$ commutes with the action of $H_i$, for $1\le i \le d-1$. Thus, to show the commuting actions of $\Ui$ and $\HB$, it remains to check the commutativity of the actions of $H_0$ and the generators of $\Ui$.  

To that end, it suffices to consider $d=1$ (thanks to the coideal property of $\Ui$ and the fact that the action of $H_0$ depends solely on the first tensor factor). In this case, the commutativity between $\Ui$-action and $H_0$-action on $\V$ follows directly from Lemmas~\ref{lem:Vpm} and \ref{lem:H0}. 
%
%
%
%
%
%

The double centralizer property is equivalent to a multiplicity-free decomposition of $\V^{\otimes d}$ as an $\Ui\otimes \Hy_{B_d}$-module, which reduces by a deformation argument to the $q=1$ setting. At the specialization $q\mapsto 1$, $\Ui$ becomes the enveloping algebra of $\mathfrak{sl}(r+m) \oplus \mathfrak{gl}(r)$, $\V =(\V_\bu \oplus \V_+) \oplus \V_-$ becomes the natural representation of  $\mathfrak{sl}(r+m) \oplus \mathfrak{gl}(r)$, on which $s_0\in W_d$ acts as $(\text{Id}_{\V_\bu \oplus \V_+}, -\text{Id}_{\V_-})$. The multiplicity-free decomposition of $\V^{\otimes d}$ at $q=1$ can be established by a standard approach  where the simples are parameterized by ordered pairs of partitions $(\lambda, \mu)$ such that $l(\lambda) \le r+m, l(\mu) \le r$ and $|\lambda| +|\mu| =d$. 
\end{proof}

\begin{remark} 
  \label{rem2.2} 
Theorem~\ref{thm:UiHB}  is a common generalization of $q$-Schur dualities of type A and B. It specializes to Jimbo duality (Proposition~\ref{prop:Jimbo}) when $r=0$. (In this case, $\Ui=\U$, and $H_0$ acts as $p\cdot \text{Id}$ and so the action of $\HB$ reduces to the action of $\Hy_{S_d}$.) 

On the other hand, for $m=0,1$, Theorem~\ref{thm:UiHB} reduces to \cite[Theorems~ 5.4, 6.27]{BW18a} (for $p=q$), \cite[Theorem~3.4]{Bao17} (for $p=1$), and \cite[Theorems~ 2.6, 4.4]{BWW18} for general $p$. 
The conventions {\em loc. cit.} are consistent with each other, while a different comultiplication for $\U$ is used in this paper; this has led to a different partial ordering on $\Ibw^d$ and a switch of $q,p$ from {\em loc. cit.} to $q^{-1},p^{-1}$ for the action of Hecke algebra; cf. Lemma~\ref{lem:HB}. 
\end{remark}

\section{$\io$Canonical basis on the tensor module}
 \label{sec:iCB}

In this section, we fix the parameters $\va_i$ ($i\in I_\circ$) as in \eqref{eq:para}--\eqref{eq:para0} as for Theorem~\ref{thm:UiHB}, and further assume that $p \in q^\Z$. We show that the bar involution on the tensor space is compatible with the bar involutions on the algebras $\Ui$ and $\HB$. 
We further show that the $\imath$canonical bases on the tensor space arising from the $\imath$quantum group and from Hecke algebra coincide. 

\subsection{Generalities of $\io$canonical bases}

In this subsection we review several constructions in the theory of $\io$canonical basis \cite{BW18a, BW18b}. 

A bar involution $\iba$ on $\Ui$ was given in \cite{BW18a} of the quasi-split type AIII (i.e., $m=0, 1$); it was stated therein that a bar involution exists for general $\imath$quantum groups, and this was subsequently established in \cite{BK15}. In any case, the existence of the bar involution for $\Ui$ of type AIII under the assumption on parameters \eqref{eq:para}--\eqref{eq:para0} can be checked directly from the known presentation of $\Ui$.

\begin{lemma} 
 \label{lem:iba}
There is a unique bar involution on $\Ui$, denoted by $\psi_{\io}$, such that
$$\iba(q)=q^{-1},\ \iba(B_j)=B_j,\ \iba(E_i)=E_i,\ \iba(F_i)=F_i,\ \iba(K_\mu)=K_{-\mu},
$$
for $j\in I_\circ, i\in I_{\bu},$ and $\mu \in Y^\imath$. 
\end{lemma}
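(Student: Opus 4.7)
The plan is to establish uniqueness from the generator structure of $\Ui$ and to establish existence by verifying that the proposed assignment preserves every defining relation; alternatively, one may specialize the general existence theorem for bar involutions on quantum symmetric pairs \cite{BK15}.

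Uniqueness is immediate: the elements $K_\mu$ ($\mu \in Y^\imath$), $E_i, F_i$ ($i \in I_\bu$), and $B_j$ ($j \in I_\circ$) generate $\Ui$ as a $\Q(q)$-algebra, so any anti-linear map with $\iba(q) = q^{-1}$ is determined by its values on this generating set.

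For existence via direct verification, one would invoke the Letzter--Kolb presentation of $\Ui$ by generators and relations \cite{Let99, Let02, BK15}. The relations split naturally into three groups. First, the standard quantum group relations among $E_i, F_i, K_\mu$ for $i \in I_\bu$ and $\mu \in Y^\imath$ are preserved because they are already preserved by the bar involution $\psi$ of $\U$, and $\iba$ coincides with $\psi$ on this subalgebra. Second, the commutation relations such as $K_\mu B_i = q^{-(\mu,\alpha_i)} B_i K_\mu$ would follow from $\iba(q^{-(\mu,\alpha_i)}) = q^{(\mu,\alpha_i)}$ paired with $K_\mu \mapsto K_{-\mu}$ and $\iba(B_i) = B_i$. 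Third, one is left with the $\imath$-Serre relations among the $B_j$'s and the mixed Serre relations involving $B_j$ and $E_i$ for $i \in I_\bu$, whose coefficients depend explicitly on the parameters $\va_j$ (and $\kappa_0$ when $m=0$).

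The main obstacle will be the verification in the third group. The $\imath$-Serre polynomials have coefficients explicit in $q^{\pm 1}$, the $\va_j$'s, and $\kappa_0$; bar-invariance of each relation translates into a specific compatibility condition on these parameters. Our choice \eqref{eq:para}--\eqref{eq:para0} is tailored precisely so that every such compatibility holds. Since $\va_i = 1$ for all $i \ne \pm \pt$, nontrivial $p$-dependence occurs only at the boundary nodes $\pm \pt$ (and in $\kappa_0$ when $m=0$), so the check would collapse to a small number of explicit scalar identities at those nodes; at all other nodes the relations descend immediately from the $q \leftrightarrow q^{-1}$ symmetry built into $\psi$. Having checked all three groups, the assignment extends to an anti-algebra homomorphism $\iba\colon \Ui \to \Ui$, and $\iba^2 = \operatorname{Id}$ follows by checking it on generators.
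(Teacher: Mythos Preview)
Your approach matches the paper's: the lemma is stated without a formal proof, the preceding paragraph noting that existence follows from \cite{BK15} in general or can be checked directly from the known presentation of $\Ui$ under the parameter choices \eqref{eq:para}--\eqref{eq:para0}, which is exactly the two-pronged strategy you outline. One terminological slip to fix: $\iba$ is an anti-\emph{linear} algebra automorphism, not an ``anti-algebra homomorphism'' --- it does not reverse multiplication --- and indeed your relation-by-relation verification is precisely the check for an ordinary (multiplicative) algebra map, so the argument stands once the wording is corrected.
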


Note that  $\iba(p)=p^{-1}$ as $p\in q^\Z$. The two bar maps on $\Ui$ and $\U$ are not compatible under the inclusion map $\Ui\to \U$. As a generalization of quasi R-matrix \cite[4.1.2]{Lus93}, a notion of quasi K-matrix (also known earlier as intertwiner), denoted by $\Upsilon$,  was formulated in \cite{BW18a}; a proof in greater generality was subsequently given in \cite{BK19}; also cf. \cite{BW18b}. 

\begin{proposition} \cite{BW18a, BK19, BW18b}
  \label{prop:upsilon}
There exists a unique family of elements $\Upsilon_\mu\in \U^+_{\mu}$, such that $\Upsilon_0=1$ and $\Upsilon=\sum_\mu \up_\mu$ satisfies 
\[
 \iba(u)\up=\up\psi(u),\quad \forall u \in \Ui.
 \]
Moreover, $\up_\mu=0$ unless $w_\bu \tau (\mu) =\mu$.
\end{proposition}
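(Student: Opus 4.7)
The plan is to establish both existence and uniqueness of $\up$ by converting the intertwining identity $\iba(u)\up=\up\,\psi(u)$ into a recursion on the weight grading of $\U^+$, tested separately against the three families of generators of $\Ui$: the $K_\nu$ with $\nu\in Y^\io$, the $E_i$ with $i\in I_\bu$, and the $B_i$ with $i\in I_\circ$.

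First I would extract the weight restriction. Applying the intertwining identity to $u=K_\nu$ for $\nu\in Y^\io$, and noting that $\iba(K_\nu)=K_{-\nu}=\psi(K_\nu)$, the weight-$\mu$ component gives $K_{-\nu}\up_\mu=\up_\mu K_{-\nu}$. Since $\up_\mu\in\U^+_\mu$ is homogeneous of weight $\mu$, this forces $q^{-(\nu,\mu)}=1$ for every $\nu\in Y^\io$, i.e.\ $(\nu,\mu)=0$ on $Y^\io$. Using $Y^\io=\{\lambda-w_\bu\tau(\lambda):\lambda\in Y\}$ and the fact that $w_\bu\tau$ is an isometry, this is equivalent to $w_\bu\tau(\mu)=\mu$, yielding the moreover clause. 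Similarly, the relations for $E_i$ with $i\in I_\bu$, fixed by both bar maps, amount to the internal constraint $[E_i,\up_\mu]=0$; this is compatible with and, as we shall see, automatic from the $B_i$-recursion.

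The main step is the $B_i$ recursion for $i\in I_\circ$. Writing $B_i=F_i+\va_i T_{w_\bu}(E_{\tau(i)})K_i^{-1}$, applying $\iba(B_i)=B_i$ on the left and $\psi(B_i)$ on the right, and isolating the weight-$(\mu+\alpha_i)$ component, one obtains schematically
\[
F_i\,\up_{\mu+\alpha_i}-\up_{\mu+\alpha_i+\alpha_i-w_\bu\tau(\alpha_i)}\,F_i \;=\;\text{(lower-weight terms in the $T_{w_\bu}(E_{\tau(i)})K_i^{-1}$ piece)}.
\]
Invoking Lusztig's skew derivation ${}_ir:\U^+\to\U^+$ via the identity $F_i x - xF_i=(q_i-q_i^{-1})^{-1}(K_i\,{}_ir(x)-K_i^{-1}r_i(x))$ for $x\in\U^+$, the left side becomes an explicit linear combination of ${}_ir(\up_{\mu+\alpha_i})$ and $r_i(\up_{\mu+\alpha_i})$ against $K_i^{\pm 1}$, while the right side is already expressed in terms of $\up_\nu$ for $\nu$ of strictly smaller height than $\mu+\alpha_i$. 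Since the pairing between $\U^+_\lambda$ and the derivations $\{{}_ir,r_i\}_{i\in I}$ is nondegenerate in positive weight, this determines $\up_{\mu+\alpha_i}$ uniquely from $\{\up_\nu:\mathrm{ht}(\nu)<\mathrm{ht}(\mu+\alpha_i)\}$. Induction on height, starting from $\up_0=1$, then produces all $\up_\mu$ and simultaneously proves uniqueness; the weight-grading forces $\up_\mu=0$ whenever $w_\bu\tau(\mu)\neq\mu$, agreeing with step one.

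The principal obstacle is consistency: different $i\in I_\circ$ give potentially different recursions, and these must all land on the same $\up_\mu$; moreover the recursion must respect the commutation with $E_j$ ($j\in I_\bu$) and the Serre-type relations defining $\Ui$. I would handle this in two stages. First, I would verify that the map $u\mapsto\iba(u)\up-\up\,\psi(u)$ is a $\Q(q)$-linear derivation-like map vanishing on the generators just analyzed, hence vanishes on every relation of $\Ui$; this reduces the consistency to a finite check on the $\imath$-Serre relations, which is precisely the compatibility carried out in~\cite{BW18a, BK19}. Second, with consistency in hand, the recursion yields a well-defined formal sum $\up=\sum_\mu\up_\mu$ satisfying the intertwining identity on all of $\Ui$, establishing existence. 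Uniqueness then follows because any two solutions $\up$ and $\up'$ with $\up_0=\up'_0=1$ satisfy the same recursion, forcing $\up_\mu=\up'_\mu$ at every weight.
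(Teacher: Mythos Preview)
The paper does not prove this proposition at all: it is quoted as a known result from \cite{BW18a, BK19, BW18b}, with no argument supplied. Your sketch is essentially the proof strategy carried out in those references (particularly \cite{BK19}): extract the weight condition from the $K_\nu$, rewrite the $B_i$-intertwining relation in terms of Lusztig's skew derivations ${}_ir,r_i$ to obtain a height recursion on $\up_\mu$, and then argue consistency of the overdetermined system.

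Two minor caveats. First, calling $u\mapsto\iba(u)\up-\up\psi(u)$ ``derivation-like'' is misleading; the relevant fact is simply multiplicativity: if the intertwining identity holds for $u$ and for $v$, then it holds for $uv$, since $\iba$ and $\psi$ are algebra maps. This is what reduces the check to generators once $\up$ has been constructed, but it does not by itself address consistency of the recursion. Second, the commutation $[E_i,\up_\mu]=0$ for $i\in I_\bu$ is not automatic from the $B_i$-recursion alone; in \cite{BK19} these constraints, together with the $B_i$-constraints, form a coupled system whose compatibility is verified simultaneously (and is where most of the work lies). Your outline is correct in spirit and points to the right references for the substantive step.
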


Given based $\U$-modules $M_i$ ($i=1,2$) with bar involution $\bar{\,\,}$, Lusztig \cite[27.3.1]{Lus93} defined a bar involution on $\psi: M_1 \otimes M_2 \rightarrow M_1 \otimes M_2$ by $\psi(x_1\otimes x_2) =\Theta (\bar x_1 \otimes \bar x_2)$, where $\Theta$ is the quasi-R matrix. The natural representation $\V$ of $\U$ admits a bar involution such that $\bar v_i =v_i$, for all $i$. Inductively, we obtain a bar involution $\psi$ on $\V^{\otimes d}$. 

The $\U$-weight of $f \in \Ibw^d$ is defined to be $\text{wt} (f)=\sum_{i=1}^{d}\epsilon_{f(i)}$. Recall the $\imath$weight lattice $X_\imath$ from \eqref{eq:L}. Define the $\Ui$-weight of $f$ to be
$$
\text{wt}_\io(f)=\sum_{i=1}^d \bar \epsilon_{f(a)} \in X_\imath,
$$
which is the image of $\text{wt} (f)$ in $X_\imath$. Following \cite[(5.2)]{BW18b} we define the following partial order $\preceq_\io$ on $\Ibw^d$:
\begin{align}
  \label{eq:order}
g\preceq_\io f \Leftrightarrow \text{wt}_\io(g)=\text{wt}_\io(f) \text{ and } \text{wt} (g)- \text{wt} (f)\in \N [I] \cap \N[w_\bu I].
\end{align}
We also write $g\prec_\io f$ if $g\preceq_\io f$ and $g\neq f$. 
A $\Ui$-module $M$ equipped with a bar involution $\iba$ is called {\em $\imath$-involutive} if
\[
\iba(uz) =\iba(u)\iba(z),\quad \forall u\in \Ui,z\in M.
\]

\begin{proposition}  \cite{BW18b}
 \label{prop:ibar}
The $\U$-module $\V^{\otimes d}$ is an $\imath$-involutive $\Ui$-module with the bar involution
\begin{equation}
 \label{eq:bar1}
 \iba:=\up\circ \psi.
\end{equation}
Moreover, for $f \in \Ibw^d$, we have
\begin{align}  \label{order}
\up(M_f) \in M_f+\sum_{g\prec_\io f}  \Z[q,q^{-1}] M_g. 
\end{align}
\end{proposition}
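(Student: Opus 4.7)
The statement has three ingredients: the $\imath$-involutive identity $\iba(uz) = \iba(u)\iba(z)$, the property that $\iba$ is an involution (anti-linearity is automatic, since $\psi$ is anti-linear and $\up$ is $\Q(q)$-linear), and the triangularity of $\up$ on standard basis vectors. My plan is to address these in order.

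The $\imath$-involutivity is essentially forced. Lusztig's bar $\psi$ on the tensor module satisfies $\psi(ux) = \psi(u)\psi(x)$ for $u \in \U$, $x \in \V^{\otimes d}$, by its very construction from the quasi R-matrix $\Theta$. Combining this with the intertwining relation $\iba(u)\up = \up\psi(u)$ for $u \in \Ui$ in Proposition~\ref{prop:upsilon} gives
$$
\iba(uz) = \up\psi(u)\psi(z) = \iba(u)\up\psi(z) = \iba(u)\iba(z).
$$
For $\iba^2 = \text{Id}$, I would first establish the unitarity $\up\cdot\psi(\up) = 1$ of the quasi K-matrix: applying $\psi$ to the intertwining identity and invoking $\psi^2 = \text{Id}$ on $\U$ along with $\iba^2 = \text{Id}$ on $\Ui$ (a standard consequence of Lemma~\ref{lem:iba}), one sees that $\up \cdot \psi(\up)$ commutes with all of $\Ui$ in a completion of $\U$; the normalization $\up_0 = 1$ then pins it down to $1$. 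Using $\psi(\up\cdot w) = \psi(\up)\cdot\psi(w)$ on $\V^{\otimes d}$, this yields $\iba^2(z) = \up\cdot\psi(\up)\cdot z = z$.

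For the triangularity~\eqref{order}, I would expand $\up(M_f) = \sum_\mu \up_\mu(M_f)$ with $\up_\mu \in \U^+_\mu$ and $\up_0 = 1$. Since $E_i$ raises the $\U$-weight by $\alpha_i$ in the natural representation, each $\up_\mu(M_f)$ is a $\Q(q)$-combination of standard basis vectors $M_g$ with $\text{wt}(g) - \text{wt}(f) = \mu \in \N[I]$. Proposition~\ref{prop:upsilon} gives $\up_\mu = 0$ unless $w_\bu\tau(\mu) = \mu$; writing $\mu = \sum_i n_i\alpha_i$, this forces $\mu = \sum_i n_i w_\bu\alpha_{\tau(i)} \in \N[w_\bu I]$, so $\mu \in \N[I]\cap\N[w_\bu I]$, and also makes $\mu$ trivial in $X_\imath$ (cf.~\eqref{eq:L}), so $\text{wt}_\io(g) = \text{wt}_\io(f)$. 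Together these give $g \preceq_\io f$ for every $g$ appearing.

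The integrality $\up_\mu \in \Z[q,q^{-1}]\otimes \U^+_\mu$ needed for the coefficients in \eqref{order} to lie in $\Z[q,q^{-1}]$ will be the main obstacle. Since $\up$ has no closed-form expression, one must proceed inductively on $\mu$: each $\up_\mu$ is determined from lower-order terms via the intertwining property, and one checks that, under the specific parameter specializations \eqref{eq:para}--\eqref{eq:para0} used here (which are precisely the ones making Theorem~\ref{thm:UiHB} hold), the recursion preserves integrality. Everything else is a formal chain of identities once Proposition~\ref{prop:upsilon} and the standard $\U$-bar-compatibility of $\psi$ on $\V^{\otimes d}$ are in place.
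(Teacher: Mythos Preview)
Your proposal is correct and follows essentially the same approach as the paper. The paper's own proof is terse---it cites \cite[Proposition~5.1]{BW18b} for the $\imath$-involutive property and says the triangularity \eqref{order} follows from Proposition~\ref{prop:upsilon} together with the definition of $\preceq_\io$---so your sketch is in effect an unpacking of that citation, using the same ingredients (the intertwining relation for $\up$, the weight structure of $\up_\mu \in \U^+_\mu$, and the constraint $w_\bu\tau(\mu)=\mu$).
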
 

\begin{proof}
The first statement is a special case of \cite[Proposition 5.1]{BW18b}. 
The formula \eqref{order} follows by Proposition~\ref{prop:upsilon} and the definition of the partial order $\preceq_\io$ in \eqref{eq:order}. 
\end{proof}

Below is a very special case of \cite[Theorem 5.7]{BW18b} concerning about $\V^{\otimes d}$. 

\begin{proposition}
 \label{prop:iCB}
(1) The $\Ui$-module $\V^{\otimes d}$ admits a unique $\io$canonical basis $\{C_g | g \in \Ibw^d\}$ which is characterized by 2 properties: (i) $C_g$ is $\iba$-invariant; (ii) $C_g$ is of the form:
\begin{align}
  \label{eq:bf}
C_g \in M_g+\sum_{g' \in \Ibw^d} q^{-1}\Z[q^{-1}]M_{g'}.
\end{align}
(2) The $\V^{\otimes d}$ admits a unique dual $\io$canonical basis $\{C^*_g | g \in \Ibw^d\}$ such that (i) $C^*_g$ is $\iba$-invariant; (ii) $C^*_g \in M_g +\sum_{g' \in \Ibw^d} q \Z[q]M_{g'}.$
\end{proposition}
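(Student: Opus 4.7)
The plan is to invoke Lusztig's classical construction of canonical bases (the so-called ``Lusztig lemma''), which is exactly the mechanism used for the general result \cite[Theorem~5.7]{BW18b} that this proposition specializes. The three ingredients are already at hand: the bar involution $\iba$ on $\V^{\otimes d}$ from Proposition~\ref{prop:ibar}, the standard basis $\{M_g\}_{g\in \Ibw^d}$, and the partial order $\preceq_\io$ on $\Ibw^d$ from \eqref{eq:order}; the assumption $p\in q^\Z$ ensures that all coefficients live in $\Z[q,q^{-1}]$.

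First I would establish the integral unitriangularity
\[
\iba(M_f) \in M_f + \sum_{g \prec_\io f} \Z[q,q^{-1}]\, M_g,
\]
together with the finite interval property that $\{g\in \Ibw^d : g \preceq_\io f\}$ is finite for every $f$. The unitriangularity follows by writing $\iba=\up\circ\psi$ and combining \eqref{order} for the $\up$-factor with the standard triangularity of Lusztig's bar involution $\psi$ on tensor products of based $\U$-modules (obtained via the quasi R-matrix $\Theta$, whose coefficients lie in $\Z[q,q^{-1}]$); the resulting order refines $\preceq_\io$. Finiteness of the lower intervals is immediate from the defining conditions $\mathrm{wt}_\io(g)=\mathrm{wt}_\io(f)$ and $\mathrm{wt}(g)-\mathrm{wt}(f)\in \N[I]\cap \N[w_\bu I]$, which confine $g$ to a bounded region of a fixed $\imath$-weight space.

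With these in place, Lusztig's lemma constructs $C_g$ inductively along $\preceq_\io$: given the $C_{g'}$ for all $g'\prec_\io g$, one writes $\iba(M_g)=M_g+\sum_{g'\prec_\io g} r_{g',g}\, M_{g'}$ and subtracts a suitable $\Z[q^{-1}]$-combination of the previously constructed $C_{g'}$ to produce an $\iba$-invariant element $C_g\in M_g + \sum_{g'\prec_\io g} q^{-1}\Z[q^{-1}]\, M_{g'}$. Uniqueness follows by the standard argument: any $\iba$-invariant $z=\sum c_g M_g$ with all $c_g\in q^{-1}\Z[q^{-1}]$ must vanish, since choosing a $\preceq_\io$-maximal $g'$ in the support with $c_{g'}\neq 0$ forces $c_{g'}=\overline{c_{g'}}\in q^{-1}\Z[q^{-1}]\cap q\Z[q]=\{0\}$, a contradiction. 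The dual basis $\{C^*_g\}$ is produced by the parallel construction with $q^{-1}\Z[q^{-1}]$ replaced by $q\Z[q]$ throughout.

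The main obstacle is securing the integrality in the triangularity of $\iba$; this relies on the integrality of the quasi R-matrix $\Theta$ (cf.\ \cite{Lus93}) and of the quasi K-matrix $\Upsilon$ (Proposition~\ref{prop:upsilon} together with \cite{BW18a, BK19, BW18b}). Once these integrality statements are invoked, the remainder of the argument is formal, and both the existence and the uniqueness of the $\imath$canonical and dual $\imath$canonical bases follow simultaneously from the Lusztig lemma.
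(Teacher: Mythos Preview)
Your proposal is correct and matches the paper's approach: the paper does not give an independent proof but simply cites \cite[Theorem~5.7]{BW18b}, whose proof is precisely the Lusztig-lemma argument you outline (unitriangularity of $\iba=\up\circ\psi$ with respect to $\preceq_\io$, finite intervals, and integrality of $\Theta$ and $\up$). Your sketch faithfully reconstructs that argument, including the uniqueness step via the standard ``$\iba$-invariant with coefficients in $q^{-1}\Z[q^{-1}]$ implies zero'' observation.
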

It was then shown that the $C_g$ satisfy a stronger property: 
$
C_g \in M_f+\sum_{g'\prec_\io g} q^{-1}\Z[q^{-1}]M_{g'}.
$

\subsection{$\imath$Canonical basis on $\V$}

Recall the notations $\Iwl, \Iwr, \Ib$ from \eqref{eq:III} and $m=2n$.

\begin{lemma}
  \label{lem:a}
We have
\begin{align}
\iba (v_a) =\up(v_a) &= v_a,\quad a\in \Iwl\cup \Ib;
\label{iba1}
 \\
\iba (v_a) =\up(v_a) &= v_a+(p^{-1}-p)v_{-a},\quad a\in \Iwr.
\label{iba2}
\end{align}
\end{lemma}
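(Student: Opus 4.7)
The plan is to reduce both identities to a computation of $\up(v_a)$ for $a \in \Ibw$. Since $\psi$ fixes the standard basis of the natural representation, $\iba(v_a) = \up\circ\psi(v_a) = \up(v_a)$ by Proposition~\ref{prop:ibar}. Expanding $\up = \sum_\mu \up_\mu$ as in Proposition~\ref{prop:upsilon}, and using that $\V$ has one-dimensional weight spaces, each $\up_\mu(v_a)$ is a scalar multiple of some $v_b$ with $\mu = \epsilon_b - \epsilon_a$; since $\mu$ is a nonnegative integer combination of simple roots, $b\le a$.

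Next I would enumerate the pairs $(a,b)$ surviving the constraint $w_\bu\tau(\mu)=\mu$ from Proposition~\ref{prop:upsilon}. The key observations are: for $i\in I_\bu$ one has $w_\bu\tau(\alpha_i)=w_\bu(\alpha_{-i})=-\alpha_i$, so $w_\bu\tau$ acts as $-\mathrm{id}$ on the Levi root sublattice $\Z I_\bu$; for $i\in I_\circ$ with $|i|>\pt$, $w_\bu$ fixes $\alpha_i$ while $\tau$ swaps the two halves of $I_\circ$; and for $i=\pm\pt$, $w_\bu(\alpha_{\pm\pt})$ is a long sum running across $I_\bu$, which spoils the balance unless the contributions cancel symmetrically about $0$. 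A short case check then shows that for $a\in\Iwl$ or $a\in\Ib$, only $b=a$ is admissible, yielding $\up(v_a)=v_a$ and hence \eqref{iba1}; for $a\in\Iwr$, besides $b=a$, the only admissible $b$ is $b=-a\in\Iwl$, so $\up(v_a)=v_a+c_a v_{-a}$ for some $c_a\in\Q(q)$.

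To pin down $c_a$ for $a\in\Iwr$, I would apply the intertwining identity $\iba(u)\up=\up\psi(u)$ of Proposition~\ref{prop:upsilon} to $v_a$, with $u=B_j\in\Ui$ chosen so that, via Lemma~\ref{lem:Bi}, $B_j$ sends $v_a$ into a vector supported on $\Ib\cup\Iwl$, where $\up$ is the identity by the previous step. Combined with $B_j=F_j+\va_j T_{w_\bu}(E_{\tau(j)})K_j^{-1}$ from \eqref{eq:Bi} and the parameters \eqref{eq:para}, this yields a linear equation involving $c_a$ (and possibly $c_{a'}$ for smaller $a'\in\Iwr$); solved recursively, starting from the base case $a=\pt+\tfrac12$, it gives $c_a=p^{-1}-p$.

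The main obstacle is this last step, because unwinding $\psi(B_j)$ requires handling $\psi(T_{w_\bu}(E_{\tau(j)}))$, which is delicate since $\psi$ does not commute with the braid operator $T_{w_\bu}$ when there are several black nodes. A cleaner alternative, which I would favor in writing up the proof, is to invoke the quasi-split computation ($m=0$ or $1$) from \cite{BW18a, BWW18}, where $c_a=p^{-1}-p$ is already established, and observe that the $\up$-component producing the $v_{-a}$ coefficient involves only the white-node part of $\U^+$, so the scalar is insensitive to the insertion of black nodes into the Satake diagram.
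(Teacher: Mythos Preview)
Your reduction to $\up(v_a)$ and the weight analysis showing $\up(v_a)=v_a$ for $a\in\Iwl\cup\Ib$ and $\up(v_a)=v_a+c_a v_{-a}$ for $a\in\Iwr$ are correct; this is essentially the paper's use of \eqref{order}. The gap is in determining $c_a$.

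Your alternative---reducing to the quasi-split case by asserting that ``the $\up$-component producing the $v_{-a}$ coefficient involves only the white-node part of $\U^+$''---is not justified, and the stated reason is false: the relevant weight is $\mu=\epsilon_{-a}-\epsilon_a=\sum_{i=-a+1/2}^{a-1/2}\alpha_i$, which contains every simple root $\alpha_i$ with $i\in I_\bu$, so $\up_\mu$ lives in a graded piece of $\U^+$ that genuinely involves the black-node generators. The scalar $c_a$ is the coefficient of the ordered monomial $E_{-a+1/2}E_{-a+3/2}\cdots E_{a-1/2}$ in $\up_\mu$, and there is no a priori reason this is insensitive to the Satake diagram. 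Your first method (the raw intertwining identity applied to $B_j$) would, as you note, require unwinding $\psi(T_{w_\bu}(E_{\tau(j)}))$, and you do not carry it out.

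The paper sidesteps all of this. It never computes any $\up_\mu$; instead, using Lemma~\ref{lem:Bi} with the parameters \eqref{eq:para}, a one-line induction on $a\in\Iwr$ gives
\[
B_{a-\frac12}\cdots B_{\pt+1}B_{\pt}(v_{\pt-\frac12})=v_a+p^{-1}v_{-a}.
\]
Each $B_j$ is $\iba$-invariant (Lemma~\ref{lem:iba}), $v_{\pt-\frac12}$ is $\iba$-invariant by \eqref{iba1}, and $\V$ is $\imath$-involutive (Proposition~\ref{prop:ibar}); hence $v_a+p^{-1}v_{-a}$ is $\iba$-invariant. Since $-a\in\Iwl$, the vector $v_{-a}$ is also $\iba$-invariant by \eqref{iba1}, and one reads off $\iba(v_a)=(v_a+p^{-1}v_{-a})-p\,v_{-a}$ immediately. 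The key simplification over your plan is to use the $\imath$-involutive property $\iba(B_j v)=B_j\iba(v)$ rather than the raw intertwining relation for $\up$, thereby avoiding $\psi(B_j)$ altogether.
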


\begin{proof}
As $v_a$ is bar invariant (i.e., $\psi$-invariant), the equality $\iba (v_a) =\up(v_a)$, for all $a$, follows by definition $\psi_i =\Upsilon \psi$ in \eqref{eq:bar1}. 

Let $a\in \Iwl\cup \Ib$. The equality $\up(v_a) =v_a$  is a direct consequence of \eqref{order}. 

It remains to prove the formula \eqref{iba2}, for $a \in \Iwr$ (i.e., $a\in \Ibw$ with $a\ge n+\frac12$). By a simple induction on $a$, we have 
\begin{align}  \label{eq:BBB}
B_{a-\frac12} \cdots B_{\pt+1}B_{\pt}(v_{\pt-\frac{1}{2}})=v_{a}+p^{-1}v_{-a}.
\end{align}
The element \eqref{eq:BBB} is $\iba$-invariant, since the $B_k$'s are $\iba$-invariant by Lemma~\ref{lem:iba}, $v_{\pt-\frac{1}{2}}$ is $\iba$-invariant by \eqref{iba1}, and $\V$ is $\imath$-involutive by Proposition~\ref{prop:ibar}. On the other hand, thanks to $-a \in \Iwl$, we have $v_{-a}$ is $\iba$-invariant by \eqref{iba1}. Hence, it follows that 
\begin{align*}
\iba(v_a) &= \iba \big( (v_{a}+p^{-1}v_{-a}) - p^{-1} v_{-a} \big)
\\
&= (v_{a}+p^{-1}v_{-a}) - p v_{-a}
\\
&= v_{a}+(p^{-1}-p)v_{-a}.
\end{align*}
This proves the lemma. 
\end{proof}

\begin{proposition} 
The $\io$canonical basis of $\V$ is given by 
\begin{enumerate}
\item
$\{v_a\mid a\in \Iwl \cup \Ib\}\cup\{v_a+p^{-1}v_{-a},\ a\in\Iwr\}$, if $p=q^{\Z_{>0}}$;
\item
$\{v_a\mid a\in \Ibw \}$, if $p=1$; 
\item
$\{v_a\mid a\in \Iwl \cup \Ib\}\cup\{v_a-pv_{-a},\ a\in\Iwr\}$, if $p=q^{\Z_{<0}}$.
\end{enumerate}
\end{proposition}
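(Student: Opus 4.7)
The plan is to apply the uniqueness characterization of the $\imath$canonical basis from Proposition~\ref{prop:iCB} to each of the three cases. For every proposed basis element $C$, I need to verify two properties: first, that $C$ is $\iba$-invariant; second, that $C$ lies in $M_g + \sum_{g'} q^{-1}\Z[q^{-1}] M_{g'}$, where $M_g = C$ modulo strictly smaller terms. Once both properties are checked, uniqueness forces $C$ to be the $\imath$canonical basis element indexed by $g$.

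For $a \in \Iwl \cup \Ib$, Lemma~\ref{lem:a} gives $\iba(v_a) = v_a$ directly, and $v_a$ is trivially of the required form with no lower-order terms. Hence these elements are $\imath$canonical basis elements in all three cases, independent of $p$. It remains to treat the case $a \in \Iwr$, where I use $\iba(v_a) = v_a + (p^{-1} - p) v_{-a}$ and $\iba(v_{-a}) = v_{-a}$ (which holds since $-a \in \Iwl$, by Lemma~\ref{lem:a}).

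I compute each case by direct substitution. If $p \in q^{\Z_{>0}}$, then
\[
\iba(v_a + p^{-1} v_{-a}) = v_a + (p^{-1} - p) v_{-a} + p \, v_{-a} = v_a + p^{-1} v_{-a},
\]
and $p^{-1} \in q^{-1}\Z[q^{-1}]$ by the sign hypothesis on $p$, so the triangularity condition holds. If $p \in q^{\Z_{<0}}$, then
\[
\iba(v_a - p \, v_{-a}) = v_a + (p^{-1} - p) v_{-a} - p^{-1} v_{-a} = v_a - p \, v_{-a},
\]
and $-p \in q^{-1}\Z[q^{-1}]$ by the sign hypothesis. If $p = 1$, the correction $p^{-1} - p$ vanishes, so $v_a$ itself is bar-invariant, and again has the required triangular form.

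The main step is simply Lemma~\ref{lem:a}; once that formula is in hand, the verification above is a one-line check per case. There is no real obstacle, only a bookkeeping subtlety about which coefficient lives in $q^{-1}\Z[q^{-1}]$: the asymmetry between $p^{-1}$ and $-p$ forces the split into the two nontrivial cases $p \in q^{\Z_{>0}}$ versus $p \in q^{\Z_{<0}}$, while $p = 1$ is the self-dual boundary where both normalizations collapse to $v_a$. Uniqueness in Proposition~\ref{prop:iCB} then closes the argument.
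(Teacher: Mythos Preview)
Your proof is correct and follows essentially the same approach as the paper: both invoke Lemma~\ref{lem:a} to verify $\iba$-invariance, check the triangularity condition \eqref{eq:bf}, and conclude by the uniqueness in Proposition~\ref{prop:iCB}. You have simply spelled out the case-by-case verification that the paper leaves implicit.
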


\begin{proof}
It follows by Lemma~\ref{lem:a} that these elements are $\iba$-invariant, and they are clearly of the form \eqref{eq:bf}. Hence the proposition follows by the characterization of $\imath$canonical basis in Proposition~\ref{prop:iCB}.
\end{proof}

\subsection{Compatible bar involutions and canonical bases}

We formulate a compatibility between several  bar involutions, which generalizes \cite[Theorem 5.8]{BW18a}; the same proof therein carries over. 

\begin{proposition}
  \label{prop:ioba}
There exists a unique anti-linear bar involution
$ \iba\colon \V^{\otimes d} \rightarrow \V^{\otimes d}$ such that $\iba(M_f) =M_f$, for $f\in \Ianti$, and it  
is compatible with the bar involutions on $\Hy_{B_d}$ and $\Ui$; that is, for $u\in \Ui$, $v\in \V^{\otimes d}$, and $h \in \HB$,  
$$
\iba(uvh)=\iba(u)\iba(v)\bar h.
$$
\end{proposition}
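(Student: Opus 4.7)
My plan is to take $\iba := \Upsilon \circ \psi$ on $\V^{\otimes d}$ from Proposition~\ref{prop:ibar}; this already provides an anti-linear bar involution automatically compatible with the $\Ui$-action via $\imath$-involutivity. Two tasks remain to verify: (a) $\iba(M_f) = M_f$ for every $f \in \Ianti$, and (b) $\iba(zh) = \iba(z)\bar h$ for all $z\in \V^{\otimes d}$ and $h\in \HB$. Uniqueness is straightforward: the decomposition $\V^{\otimes d} = \bigoplus_{f\in \Ianti} M_f\HB$ (Lemma~\ref{lem:basisMf} and \eqref{eq:decomp}), together with the two stated compatibilities, forces $\iba(M_f\cdot h) = M_f\cdot \bar h$ and so pins $\iba$ down completely.

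For (a), antidominance forces $f(i)\in \Iwl \cup \Ib$ for every $i$, so each $v_{f(i)}$ is bar invariant in $\V$ by Lemma~\ref{lem:a}. I first argue $\Theta\cdot M_f = M_f$: a direct weight-graded computation of the quasi R-matrix on $\V \otimes \V$ shows that $\Theta_\nu(v_a \otimes v_b)$ vanishes for $\nu > 0$ unless the first subscript is strictly less than the second, and the antidominance chain $f(1)\geq f(2)\geq \cdots$ then rules out every non-identity contribution; the $d$-fold case follows inductively via coassociativity. Next, the triangularity \eqref{order} gives $\Upsilon M_f \in M_f + \sum_{g\prec_\io f}\Z[q,q^{-1}]M_g$, and I claim the sum is empty. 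Indeed, any $g$ with $\text{wt}_\io(g) = \text{wt}_\io(f)$ and $g\neq f$ must introduce some $g(j)\in \Iwr$ (else $g$ would be another antidominant representative, forcing $g=f$), making $\text{wt}(g) - \text{wt}(f)$ a strictly positive element of $\N[I]$ rather than a negative one; combined with the support constraint $w_\bu\tau(\mu)=\mu$ from Proposition~\ref{prop:upsilon}, this forces $\Upsilon\cdot M_f = M_f$. Hence $\iba(M_f) = \Upsilon\Theta M_f = M_f$.

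For (b), it suffices to check $\iba(zH_i) = \iba(z)\bar H_i$ on each simple generator $H_i$. For $i\geq 1$ the Hecke operator $H_i$ commutes with the entire $\U$-action by Jimbo duality (Proposition~\ref{prop:Jimbo}) and hence with $\Upsilon$; moreover $\psi$ intertwines $H_i$ and $\bar H_i$ in the sense of \cite{FKK98}; so $\iba = \Upsilon\psi$ intertwines them as well. The main obstacle is the case $i=0$, which I would verify by case analysis on the first tensor slot $v_{f^\sigma(1)}$, using the explicit $H_0$-formulas (Lemma~\ref{lem:HB}), the explicit values of $\Upsilon$ on $\V$ (Lemma~\ref{lem:a}), and the $\Ui$-decomposition $\V = (\V_\bu \oplus \V_+)\oplus \V_-$ with $H_0$-eigenvalues given in Lemmas~\ref{lem:Vpm} and~\ref{lem:H0}. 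The parameter tuning \eqref{eq:para}--\eqref{eq:para0}, fixed precisely to make $\Ui$ commute with $H_0$ (Theorem~\ref{thm:UiHB}), is also exactly what makes the bar-compatibility work here; the computation specializes to \cite[Theorem~5.8]{BW18a} in the quasi-split cases $m=0,1$, and the additional black-node subspace $\V_\bu$ (on which $H_0$ acts as $p\cdot\text{Id}$ and $\Upsilon$ acts as identity) contributes no new difficulty beyond bookkeeping.
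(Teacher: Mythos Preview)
Your overall architecture matches the paper's (which simply cites \cite[Theorem~5.8]{BW18a}): take $\iba=\Upsilon\psi$, use Proposition~\ref{prop:ibar} for $\Ui$-compatibility, then verify $\iba(M_f)=M_f$ for $f\in\Ianti$ and Hecke-compatibility. Your treatment of $\psi(M_f)=M_f$ and of $H_i$ for $i\ge 1$ is fine.

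The genuine gap is your argument for $\Upsilon(M_f)=M_f$. Your claim that any $g\prec_\io f$ with $g\neq f$ ``must introduce some $g(j)\in\Iwr$'' is false, and in fact the opposite holds: every $\Upsilon_\mu$ lies in $\U^+_\mu$, so acts by products of $E_i$'s, and in this paper's convention (see \eqref{eq:natural}) each $E_i$ \emph{lowers} the index. Since antidominance gives $f(j)\le n-\tfrac12$ for all $j$, every $g$ appearing in $\Upsilon(M_f)$ has all coordinates in $\Iwl\cup\Ib$, never in $\Iwr$; your parenthetical ``else $g$ would be another antidominant representative'' does not apply because these $g$'s need not lie in the $W_d$-orbit of $f$ at all. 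The correct mechanism is the one you gesture at but do not use: the constraint $w_\bu\tau(\mu)=\mu$ from Proposition~\ref{prop:upsilon}. A short computation with $\tau(\alpha_i)=\alpha_{-i}$ and $w_\bu(\alpha_i)=-\alpha_{-i}$ for $i\in I_\bu$ shows that any nonzero $\mu\in\N[I]$ with $w_\bu\tau(\mu)=\mu$ must involve some $\alpha_i$ with $i\ge n$ (a white node on the positive side). Consequently every monomial in $\Upsilon_\mu$ contains a factor $E_i$ with $i\ge n$; such an $E_i$ annihilates any $v_a$ with $a\le n-\tfrac12$, and since the other $E$'s only push indices further down, $\Upsilon_\mu(M_f)=0$ via the iterated coproduct.

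A secondary point: for the $H_0$-compatibility you should make explicit the reduction to $d=1$. Splitting $\V^{\otimes d}=\V\otimes\V^{\otimes d-1}$ and using the identity $\Delta(\Upsilon)=\Theta^\io(\Upsilon\otimes 1)\Theta^{-1}$ gives $\iba(v_a\otimes M)=\Theta^\io(\iba(v_a)\otimes\psi(M))$; since $\Theta^\io\in\Ui\otimes\U$ and $H_0$ commutes with the $\Ui$-action on the first slot (Theorem~\ref{thm:UiHB}), the question reduces to checking $\iba(v_aH_0)=\iba(v_a)\bar H_0$ on $\V$, which is a three-case computation from Lemma~\ref{lem:a} and Lemma~\ref{lem:HB}. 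Without this step your ``case analysis on the first tensor slot'' would have to be done on all of $\V^{\otimes d}$, which is not feasible.
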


\begin{remark}
Thanks to the compatibility with the bar map on $\HB$ and $\overline{M}_f=M_f$, the bar map $\iba$ on $\V^{\otimes d}$ when restricted to $\M_f$, for anti-dominant $f$, coincides with $\iba$ in Proposition~\ref{prop:iHba}. 
\end{remark}

Recall from \eqref{eq:decomp} that $\V^{\otimes d}$ is a direct sum of the quasi-permutation modules $\M_f$ of $\HB$. The union of the (dual) quasi-parabolic KL bases on the direct summands $\M_f$ (see Theorem~\ref{thm:CBMf} and Proposition~\ref{prop:dualCBMf}) provide us a (dual) KL  basis on $\V^{\otimes d}$.

\begin{theorem}
  \label{thm:iCBsame}
The (dual) $\imath$canonical bases on $\V^{\otimes d}$ (viewed as a $\Ui$-module) coincides with the (dual) KL bases on $\V^{\otimes d} =\oplus_{f} \M_f$ (viewed as an $\HB$-module). 
More precisely, we have the identifications of bases in $\M_f$: $C_{f\cdot \sigma} =C_\sigma$ and $C^*_{f\cdot \sigma} =C^*_\sigma$, for $f\in \Ianti$ and $\sigma \in {}^fW$. 

(See Theorem~\ref{thm:CBMf}, Proposition~\ref{prop:dualCBMf} and Proposition~\ref{prop:iCB} for notations.)
\end{theorem}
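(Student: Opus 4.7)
The plan is to deduce the theorem as a direct consequence of the uniqueness characterizations of both bases once the bar involutions have been matched. By Proposition~\ref{prop:ioba}, there is a single bar involution $\iba$ on $\V^{\otimes d}$ which simultaneously serves as (a) the involution used to define the $\imath$canonical basis in Proposition~\ref{prop:iCB}, and (b) the involution whose restriction to each quasi-permutation submodule $\M_f$ is the bar involution of Proposition~\ref{prop:iHba} used to define the quasi-parabolic KL basis in Theorem~\ref{thm:CBMf}. The restriction statement in (b) holds because both involutions fix $M_f$, act on $\M_f$ compatibly with the $\HB$-bar involution, and by Proposition~\ref{prop:iHba} such an involution on $\M_f$ is uniquely determined.

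Now fix $f \in \Ianti$ and $\sigma \in {}^fW$, and set $g = f\cdot\sigma$. I will verify that the quasi-parabolic KL basis element $C_\sigma \in \M_f \subset \V^{\otimes d}$ satisfies the two characterizing properties of the $\imath$canonical basis element $C_g$ from Proposition~\ref{prop:iCB}. Bar invariance, $\iba(C_\sigma) = C_\sigma$, is built into Theorem~\ref{thm:CBMf}(i) and is preserved because the ambient bar map restricts to the intrinsic one on $\M_f$. For the leading-term condition, Theorem~\ref{thm:CBMf}(ii$'$) gives
\[
C_\sigma \in M_{f\cdot\sigma} + \sum_{w\in{}^fW,\ w<\sigma} q^{-1}\Z[q^{-1}]\,M_{f\cdot w},
\]
which is \emph{a fortiori} of the form
\[
C_\sigma \in M_g + \sum_{g'\in\Ibw^d} q^{-1}\Z[q^{-1}]\,M_{g'}
\]
required by Proposition~\ref{prop:iCB}(i). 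Invoking the uniqueness clause of Proposition~\ref{prop:iCB}, we conclude $C_\sigma = C_g = C_{f\cdot\sigma}$. The dual statement $C^*_{f\cdot\sigma}=C^*_\sigma$ is proved verbatim, replacing Theorem~\ref{thm:CBMf} by Proposition~\ref{prop:dualCBMf} and using the dual characterization in Proposition~\ref{prop:iCB}(2), with $q^{-1}\Z[q^{-1}]$ replaced by $q\Z[q]$.

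The only step that is not bookkeeping is the identification of bar involutions, but that is precisely the content of Proposition~\ref{prop:ioba}; the rest is a two-line uniqueness argument. If there is a subtle point to flag, it is that we implicitly use the fact that $\M_f$ is $\iba$-stable (so $C_\sigma$, which a priori lives in $\M_f$, can be compared with the $\imath$canonical basis vector of $\V^{\otimes d}$ inside the same module); this stability is automatic once $\iba$ on $\V^{\otimes d}$ is known to restrict to the bar involution of $\M_f$, as noted above. No further analysis of weights, of the partial order $\preceq_\io$, or of the explicit shape of $\Upsilon$ is needed for this theorem.
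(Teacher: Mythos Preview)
Your proof is correct and follows essentially the same approach as the paper: match the bar involutions via Proposition~\ref{prop:ioba} (and its accompanying remark that the restriction to $\M_f$ recovers the bar map of Proposition~\ref{prop:iHba}), then invoke the uniqueness characterization in Proposition~\ref{prop:iCB} after checking bar invariance and the triangularity condition. Your write-up is more explicit about the $\iba$-stability of $\M_f$ and the restriction step, but the logical content is identical.
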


\begin{proof}
We only need to consider the $\imath$canonical basis as the dual version follows by the same argument. 
Both bases are invariant under the same bar map $\iba$ (thanks to Proposition~\ref{prop:ioba}) and are of the form $C_g \in M_g +\sum_{g'\in \Ibw^d} q^{-1} \Z[q^{-1}] M_{g'}$. Now by the uniqueness in Proposition~\ref{prop:iCB} the $\imath$canoical basis coincides with the KL basis. The precise formula $C_{f\cdot \sigma} =C_\sigma$ follows as both sides have the same leading term $M_{f\cdot \sigma}$. 
\end{proof}

\begin{remark}
{\quad}
\begin{enumerate}
\item
In case $m=0$ (the case $m=1$ is similar), Proposition~\ref{prop:ioba} and Theorem~\ref{thm:iCBsame} reduce to \cite[Theorem~5.8, Remark 5.9]{BW18b} and \cite[Proposition~3.9, Theorem~3.10]{BWW18}. Here we choose not to use general weight functions as in \cite{BWW18} to avoid clumsy notations thought there is no difficulty in setting up in such a generality. 
\item
In case $r=0$, the $\imath$Schur duality reduces to Jimbo duality by Remark~ \ref{rem2.2}.
Accordingly Proposition~\ref{prop:ioba} and Theorem~\ref{thm:iCBsame} recover the main results in \cite{FKK98}. 
\item
The $\imath$canonical basis on $\V_\bu^{\otimes d}$ coincides with Lusztig's canonical basis. 
By Theorem~\ref{thm:iCBsame} and Example~\ref{ex:BA}, parts of the $\imath$canonical basis on $\V^{\otimes d}$ can be identified with (parabolic) Kazhdan-Lusztig bases of type A or type B, but not always. 
\end{enumerate}
\end{remark}

\subsection{Realizing $H_0$ via $K$-matrix}

For quantum symmetric pair $(\U, \Ui)$ of quasi-split type AIII, an $\Ui$-module isomorphism $\mathcal T$ on any weight  $\U$-module $M$ was constructed \cite[Theorem 2.18]{BW18a} by twisting the quasi K-matrix $\Upsilon$ by a weight function $\xi: X \rightarrow \C$. This construction has been generalized to general quantum symmetric pairs \cite[Corollary 7.7]{BK19}, who referred to it as a $K$-matrix and changed the notation to be $\mathcal K$. Let us quickly review it. 

Let $\gamma:\I\to \Q(q)$ be a function defined by
$$
\gamma(i)=\left\{\begin{aligned}
&1,\ \ &\text{ if } &i\in I_\bu \\
&-\va_i,\ \ &\text{ if } &i\in I_\circ.
\end{aligned}\right.
$$
Define a function $\xi\colon X \to \Q(q)$ by the following recursion: 
\begin{equation}
  \label{eq:recur}
\xi(\mu+\alpha_i)= \gamma(i) q^{(\alpha_i,w_\bu \tau (\alpha_i)) -(\mu,\alpha_i -w_\bu \tau (\alpha_i))}\xi(\mu),\ \ \forall \mu \in X, i\in \I.
\end{equation}
The function $\xi$ induces a linear map $\widetilde \xi$ on any weight module $M=\sum_{\mu\in X}M_\mu$ by letting 
$$
\widetilde\xi(z)=\xi(\lambda)z,\ \ \text{ for }z\in M_{\lambda}.
$$

From now on, we fix the function $\xi$ with $\xi({\epsilon_{\pt+r-\frac{1}{2}}})=1$. 

\begin{lemma} \label{lem:xi}
Let $\xi({\epsilon_{\pt+r-\frac{1}{2}}})=1$. Then we have 
$$
\xi(\epsilon_a)=\left\{ \begin{aligned}
&(-q)^{\pt+r-\frac{1}{2}-a}, 
&a\leq -\pt-{\frac{1}{2}}; \\
&(-q)^{\nb+r-1}  p^{-1},&-\pt+\frac{1}{2}\leq a \leq \pt+\frac{1}{2}; \\
&(-q)^{\pt+r-\frac{1}{2}-a}, &a\geq \pt+{\frac{3}{2}}.
\end{aligned}\right.
$$
\end{lemma}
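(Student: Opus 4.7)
The plan is to verify the formula by iteratively applying the recursion \eqref{eq:recur}, starting from the normalization $\xi(\epsilon_{\pt+r-\frac{1}{2}}) = 1$ and descending in $a$ by successively setting $\mu = \epsilon_a$ and $i = a-\frac{1}{2}$; since $\alpha_{a-\frac{1}{2}} = \epsilon_{a-1} - \epsilon_a$, each step determines $\xi(\epsilon_{a-1})$ from $\xi(\epsilon_a)$ via the scalar $\gamma(i)\,q^{(\alpha_i, w_\bu\tau(\alpha_i)) - (\epsilon_a, \alpha_i - w_\bu\tau(\alpha_i))}$.

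The first step is to compute $w_\bu\tau(\alpha_i)$ explicitly for each $i \in I$. Using that $w_\bu$ acts as $\epsilon_j \mapsto \epsilon_{-j}$ on the $\Ib$-coordinates and fixes the others, while $\tau$ sends $\alpha_i \mapsto \alpha_{-i}$, a short case analysis yields: $w_\bu\tau(\alpha_i) = -\alpha_i$ for $i \in I_\bu$; $w_\bu\tau(\alpha_i) = \alpha_{-i}$ for $i \in I_\circ$ with $|i| > \pt$; and a mixed expression in $\epsilon$'s with endpoints on opposite sides of the Satake diagram for $i = \pm\pt$. The corresponding inner products $(\alpha_i, w_\bu\tau(\alpha_i))$ take the values $-2$, $0$, $-1$. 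Next I would walk from $a = \pt+r-\frac{1}{2}$ downward through $\Iwr$, then through $\Ib$, then through $\Iwl$, applying the recursion at each step. In the parameters fixed in \eqref{eq:para} one has $\gamma(i) = -1$ for $|i| > \pt$, $\gamma(-\pt) = -p$, and $\gamma(\pt) = (-1)^\nb q^\nb p^{-1}$; a direct evaluation of the exponent in \eqref{eq:recur} then shows that each internal step within $\Iwr$ or within $\Iwl$ multiplies $\xi$ by $-q$, that each step within $\Ib$ multiplies by $1$ (so $\xi$ is constant on $\Ib$), and that the two transition steps contribute factors $q^\nb p^{-1}$ (from $\Iwr$ into $\Ib$, via $\alpha_\pt$) and $-pq$ (from $\Ib$ into $\Iwl$, via $\alpha_{-\pt}$). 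Iterating from the normalization reproduces the three-case formula in the statement.

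The main obstacle is the careful bookkeeping at the two boundary simple roots $\alpha_{\pm\pt}$. Because one endpoint of $\alpha_{\pm\pt}$ lies in $\Ib$ (where $w_\bu$ acts by negation of the index) while the other lies in $\Iwl \cup \Iwr$ (where $w_\bu$ is the identity on $\epsilon$'s), the vector $w_\bu\tau(\alpha_{\pm\pt})$ has a mixed form that is not a scalar multiple of a root, and the nonstandard parameter values $\va_{\pm\pt}$ must combine precisely with the exponents arising from $(\epsilon_a, \alpha_i - w_\bu\tau(\alpha_i))$ to produce the correct powers of $p$ and the signs that distinguish the middle region from the two outer ones. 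Away from these two roots the computation is uniform and routine.
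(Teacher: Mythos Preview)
Your approach is exactly the paper's: both iterate the one-step recursion $\xi(\epsilon_{a-1})=\gamma(a-\tfrac12)\,q^{(\alpha_i,w_\bu\tau(\alpha_i))-(\epsilon_a,\alpha_i-w_\bu\tau(\alpha_i))}\xi(\epsilon_a)$ (equivalently $\xi(\epsilon_a)=\xi(\epsilon_{a+1}+\alpha_{a+1/2})$ as the paper writes it), descending from the normalized value and treating the interior of $\Iwr$, $\Ib$, $\Iwl$ and the two boundary roots separately. Your case analysis of $w_\bu\tau(\alpha_i)$ and of the inner products $-2,0,-1$ is correct, and the paper's proof is in fact briefer than yours, only spelling out the $\Iwl$-interior case and leaving the rest implicit.

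One slip: the transition factor at $\alpha_\pt$ is $\gamma(\pt)\cdot q^{-1-(-1)}=(-1)^{\nb}q^{\nb}p^{-1}$, not $q^{\nb}p^{-1}$ as you wrote (you had already recorded $\gamma(\pt)=(-1)^{\nb}q^{\nb}p^{-1}$ correctly a line earlier). With this sign in place the product of $(-q)^{r-1}$ from the $\Iwr$ steps and $(-1)^{\nb}q^{\nb}p^{-1}$ indeed gives $(-q)^{\nb+r-1}p^{-1}$ on $\Ib$, matching the statement.
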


\begin{proof}
The function $\xi$ is completely determined by the recursion  \eqref{eq:recur} and the fixed value for $\xi({\epsilon_{\pt+r-\frac{1}{2}}})$.
Note that $\xi(\epsilon_{a})=\xi(\epsilon_{a+1}+\alpha_{a+\frac{1}{2}})$.
Thus by \eqref{eq:recur}, for $a\leq -\pt-\frac{3}{2}$, we have 
$$
\xi(\epsilon_{a})
=\gamma(a+\frac{1}{2})q^{(\alpha_{a+\frac{1}{2}},w_\bu \tau (\alpha_{a+\frac{1}{2}}))-(\epsilon_{a+1},\alpha_{a+\frac{1}{2}} -w_\bu \tau (\alpha_{a+\frac{1}{2}}))}\xi(\epsilon_{{a+1}})=-q\xi(\epsilon_{{a+1}}).
$$
The remaining cases of the recursion can be similarly made explicit. 
\end{proof}

\begin{proposition} \cite[Theorem 2.18]{BW18a} \cite[Corollary 7.7]{BK19}
\label{prop:K}
For any finite dimensional $\U$-module $M$ and any $\xi$ which satisfies the recursion in \eqref{eq:recur}, the element $\mathcal K=\up \widetilde \xi T_{w_\bu}^{-1}T_{w_0}^{-1}$ defines an $\Ui$-module isomorphism:
$$\mathcal K \colon M\longrightarrow M,\qquad z\mapsto \up\circ \widetilde \xi \circ T_{w_\bu}^{-1}T_{w_0}^{-1}(z).$$
\end{proposition}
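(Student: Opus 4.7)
The plan is to verify that $\mathcal K$ is a $\Ui$-module homomorphism by analyzing the three factors $\up$, $\widetilde\xi$, and $T_{w_\bu}^{-1}T_{w_0}^{-1}$ separately, and then to conclude bijectivity from bijectivity of each factor. Since $\Ui$ is generated by $K_\mu$ ($\mu \in Y^\io$), $E_i, F_i$ ($i \in I_\bu$), and $B_i$ ($i \in I_\circ$), it suffices to check the intertwining property on these generators. The $K_\mu$ and $I_\bu$-generators are straightforward because $T_{w_\bu}^{-1}T_{w_0}^{-1}$ permutes them in a controlled way by the Weyl group action, $\widetilde\xi$ acts as a scalar on weight spaces, and $\up$ commutes with them (by the weight restriction $w_\bu\tau(\mu)=\mu$ in Proposition~\ref{prop:upsilon}). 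The heart of the argument is the verification on $B_i$, for $i\in I_\circ$.

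First I would use a standard fact about braid operators: the composite $T_{w_0}^{-1}$ sends $F_i$ to a (signed, $K$-twisted) multiple of $E_{-w_0(i)}$ and vice versa, and conjugation by $T_{w_\bu}^{-1}$ then isolates the ``Levi part'' corresponding to $I_\bu$. Combined with the formula $B_i = F_i + \va_i\, T_{w_\bu}(E_{\tau(i)})K_i^{-1}$, a direct computation transforms $(T_{w_\bu}^{-1}T_{w_0}^{-1})\cdot B_i$ into an expression proportional to $E_i + (\text{scalar})\cdot T_{w_\bu}(F_{\tau(i)})K_i$, i.e., essentially $\psi(B_i)$ up to a $K$-weight-dependent scalar. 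The recursion \eqref{eq:recur} for $\xi$, together with the particular scalar $\gamma(i)$, is calibrated precisely so that the subsequent application of $\widetilde\xi$ absorbs this scalar, producing the identity $\widetilde\xi\circ T_{w_\bu}^{-1}T_{w_0}^{-1}\circ B_i = \psi(B_i)\circ \widetilde\xi\circ T_{w_\bu}^{-1}T_{w_0}^{-1}$ on every weight vector. Finally, applying $\up$ on the left and invoking its defining intertwining property $\iba(u)\up = \up\,\psi(u)$ from Proposition~\ref{prop:upsilon} (with $u = B_i$ and $\iba(B_i)=B_i$ by Lemma~\ref{lem:iba}) yields $\mathcal K\circ B_i = B_i \circ \mathcal K$, as desired.

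For bijectivity, each of the three factors is invertible on $M$: $T_{w_\bu}^{-1}T_{w_0}^{-1}$ is an isomorphism as a product of braid group automorphisms acting on a finite-dimensional weight module; $\widetilde\xi$ is multiplication by nonzero scalars on finite-dimensional weight spaces; and $\up$ is unipotent upper-triangular with respect to the partial order on $X$ (its zero-weight component is $1$ by Proposition~\ref{prop:upsilon}), hence invertible. The main obstacle is the careful bookkeeping in the second paragraph: one must track the $K$-weight scalars produced by $T_{w_\bu}^{-1}T_{w_0}^{-1}$ applied to $F_i$ and to $T_{w_\bu}(E_{\tau(i)})K_i^{-1}$, and check term-by-term that they are cancelled by the ratio $\xi(\mu+\alpha_i)/\xi(\mu)$ prescribed in \eqref{eq:recur}. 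This is essentially the verification done in \cite[Theorem 2.18]{BW18a} for the quasi-split case and in \cite[Corollary 7.7]{BK19} in the general setting, so I would simply specialize those computations to the present parameters \eqref{eq:para}--\eqref{eq:para0} and the explicit $\xi$ of Lemma~\ref{lem:xi}.
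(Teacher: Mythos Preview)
The paper does not supply its own proof of this proposition: it is quoted directly from \cite[Theorem 2.18]{BW18a} and \cite[Corollary 7.7]{BK19} and used as a black box. So there is no in-paper argument to compare against.

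Your sketch is a faithful outline of how the result is established in those references: one shows that $\widetilde\xi\, T_{w_\bu}^{-1}T_{w_0}^{-1}$ intertwines the $\Ui$-action with its $\psi$-twist (the recursion \eqref{eq:recur} is exactly what makes the weight scalars match), and then the defining property of $\up$ in Proposition~\ref{prop:upsilon} converts $\psi$ back to $\iba$, giving commutation with the $\iba$-fixed generators $B_i$, $E_i$, $F_i$ ($i\in I_\bu$), $K_\mu$. One small inaccuracy: your remark that $\up$ ``commutes'' with the $I_\bu$-generators ``by the weight restriction $w_\bu\tau(\mu)=\mu$'' is not quite the right justification; the commutation follows instead from the intertwining relation $\iba(u)\up=\up\,\psi(u)$ together with $\iba(E_i)=\psi(E_i)=E_i$ (and likewise for $F_i$). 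Also, be aware that the intertwining step for $E_i,F_i$ with $i\in I_\bu$ is not merely that $T_{w_\bu}^{-1}T_{w_0}^{-1}$ ``permutes them'': it sends them to the opposite Borel and back, and the $\widetilde\xi$-factor is needed there as well, not only for $B_i$. These are bookkeeping points rather than gaps; the overall strategy you describe is the one used in the cited sources.
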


We compute the action of $\mathcal K$ on the natural $\U$-module $\V$.

\begin{lemma}
The $\Ui$-isomorphism $\mathcal K$ on $\V$ acts as $(-p) \text{Id}$ on the submodule $\V_-$ and as $p^{-1} \text{Id}$ on $\V_+ \oplus \V_\bu$.
\end{lemma}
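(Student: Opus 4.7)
The plan is to apply Schur's lemma to the $\Ui$-module decomposition $\V = (\V_\bu \oplus \V_+) \oplus \V_-$ provided by Lemma~\ref{lem:Vpm}. A direct check using the action formulas \eqref{eq:natural} and Lemma~\ref{lem:Bi}, together with the parameter choices \eqref{eq:para}--\eqref{eq:para0}, shows each summand is a simple $\Ui$-module: on $\V_-$ and on $\V_+$, the generators $B_i$ with $i\in I_\circ\setminus\{\pt\}$ act as in the natural representation, while (for $m\ge 1$) the identity $B_\pt(v_{\pt-\frac{1}{2}}) = v_{\pt+\frac{1}{2}} + p^{-1}v_{-\pt-\frac{1}{2}}$ from Lemma~\ref{lem:Bi} combined with \eqref{eq:para} links $\V_\bu$ and $\V_+$ into a single cyclic module (the $m=0$ case is analogous, using the modified $B_0$). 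Since $\mathcal K$ is a $\Ui$-module isomorphism by Proposition~\ref{prop:K}, Schur's lemma implies $\mathcal K$ acts by a scalar on each simple summand, and only the two scalars remain to be identified.

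To pin them down, the plan is to compute $\mathcal K$ on the two extremal weight vectors $v_{\pt+r-\frac{1}{2}}$ and $v_{-\pt-r+\frac{1}{2}}$, whose linear combinations recover the distinguished generators $v_{\pt+r-\frac{1}{2}}+p^{-1}v_{-\pt-r+\frac{1}{2}}\in \V_+$ and $v_{\pt+r-\frac{1}{2}}-pv_{-\pt-r+\frac{1}{2}}\in \V_-$. The computation traces through the four factors of $\mathcal K = \Upsilon\circ\widetilde\xi\circ T_{w_\bu}^{-1}\circ T_{w_0}^{-1}$ in order. Since every weight space of $\V$ is one-dimensional and $w_0\cdot\epsilon_a=\epsilon_{-a}$, the operator $T_{w_0}^{-1}$ sends $v_a\mapsto c_a v_{-a}$ for scalars $c_a\in\Q(q)$ computed along a reduced expression of $w_0$ via \eqref{eq:braid}; likewise $T_{w_\bu}^{-1}$ sends $v_b\mapsto c_b' v_{-b}$ for $b\in\I_\bu$ and fixes $v_b$ for $b\in\I_\circ$, since $w_\bu$ stabilizes those weights. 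Then $\widetilde\xi$ rescales by the explicit value in Lemma~\ref{lem:xi} (using the fixed normalization $\xi(\epsilon_{\pt+r-\frac{1}{2}})=1$), and finally $\Upsilon$ acts via Lemma~\ref{lem:a}, which introduces the characteristic correction $v_b\mapsto v_b+(p^{-1}-p)v_{-b}$ for $b\in\Iwr$.

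The main obstacle will be the bookkeeping of signs and powers of $q$ generated by the repeated braid relations \eqref{eq:braid} along a reduced expression of $w_0$, so that the accumulated scalar from $T_{w_0}^{-1}$ combines with $\xi$ from Lemma~\ref{lem:xi} and the $\Upsilon$-correction from Lemma~\ref{lem:a} to produce exactly the asserted constants. The choice of normalization $\xi(\epsilon_{\pt+r-\frac{1}{2}})=1$ is engineered precisely for this cancellation: once $\mathcal K(v_{\pt+r-\frac{1}{2}})$ is expanded into the basis $\{v_{\pt+r-\frac{1}{2}},v_{-\pt-r+\frac{1}{2}}\}$ and decomposed along $\V_+\oplus \V_-$, the $(p^{-1}-p)v_{-a}$ contribution from $\Upsilon$ is exactly what converts the two extremal weight vectors into the eigenvectors of $\mathcal K$ with eigenvalues $p^{-1}$ and $-p$. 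Schur's lemma from the first paragraph then propagates these scalars to all of $\V_\bu\oplus\V_+$ and $\V_-$ respectively.
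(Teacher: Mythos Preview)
Your approach is correct in outline but differs from the paper's, and one step needs a small patch. When you invoke Schur's lemma to conclude that $\mathcal K$ acts as a scalar on each simple summand, you are implicitly using that $\V_-$ and $\V_\bu\oplus\V_+$ are \emph{non-isomorphic} as $\Ui$-modules; otherwise a $\Ui$-endomorphism of $\V$ could mix them. For $m\ge 1$ this is immediate by dimension ($r$ versus $r+m$), but for $m=0$ you should say a word (e.g., compare $Y^\imath$-weights, or note that your later eigenvalue computation on a generator already forces $\mathcal K|_{\V_-}=(-p)\mathrm{Id}$ without appealing to Schur at all, since $(\mathcal K+p\,\mathrm{Id})$ kills a cyclic generator of $\V_-$).

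The paper takes a more direct route that bypasses both simplicity and Schur's lemma. The braid operators $T_{w_0}$ and $T_{w_\bu}$ act on the natural representation $\V$ by closed one-line formulas, $T_{w_0}(v_a)=(-q)^{r+\nb-a-\pt-\frac12}v_{-a}$ and similarly for $T_{w_\bu}$; there is no need to track a reduced expression step by step, so your anticipated ``main obstacle'' does not arise. Combining these with Lemma~\ref{lem:xi} yields the clean identity
\[
\widetilde\xi\circ T_{w_\bu}^{-1}T_{w_0}^{-1}(v_a)=
\begin{cases} v_{-a}, & a\in\Iwl\cup\Iwr,\\ p^{-1}v_a, & a\in\Ib, \end{cases}
\]
valid for \emph{all} $a$ simultaneously. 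Applying $\Upsilon$ via Lemma~\ref{lem:a} then gives the eigenvalue equations on every basis vector of $\V_\pm$ and $\V_\bu$ at once. Your Schur-lemma reduction buys nothing here: the uniform computation is no harder than the extremal-vector computation you propose, and it avoids the extra hypotheses on the module structure.
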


\begin{proof}
First one computes that the actions of $T_{w_0}$ and $T_{w_\bu}$ on $\V$ are given by
\begin{align*}
T_{w_0}(v_a) &=(-q)^{r+\nb-a-\pt-\frac{1}{2}}v_{-a}, \quad \forall a\in \Ibw,\\
T_{w_\bu}(v_a) &=\left\{\begin{aligned}
&(-q)^{\nb-a-\pt-\frac{1}{2}}v_{-a},& \text{ if } a\in \I_\bu; \\
&v_a,&else.
\end{aligned}
\right.
\end{align*}

Hence by a direct computation using these 2 formulas and Lemma~\ref{lem:xi} we have 
\begin{align}
  \label{5.14}
\widetilde\xi \circ T_{w_\bu}^{-1}T_{w_0}^{-1}(v_a)=\left\{\begin{aligned}
&v_{-a},\ &a\in \Iwl\cup\Iwr; \\
&p^{-1}v_a,\ &a\in \Ib.
\end{aligned}\right.
\end{align}

By Lemma~\ref{lem:a} we have
\begin{align*}
\K (v_{\pt+\frac{1}{2}}-pv_{-\pt-\frac{1}{2}}) &=-p(v_{\pt+\frac{1}{2}}-pv_{-\pt-\frac{1}{2}}),\\ 
\K (v_{\pt+\frac{1}{2}}+p^{-1}v_{-\pt-\frac{1}{2}}) &=p^{-1}(v_{\pt+\frac{1}{2}}+p^{-1}v_{-\pt-\frac{1}{2}}).
\end{align*}
Again by Lemma~\ref{lem:a} we have $\K (v_a)=p^{-1} v_a,\ \forall a\in \I_\bu.$
Now the lemma follows.
\end{proof}

The action of the generators $H_i$ for $\Hy_{S_d}$, for $1\le i \le d-1$, on $\V^{\otimes d}$ are realized via R-matrix \cite{Jim86} (also see \cite{LW23}). This has the following generalization for the generator $H_0$ in $\HB$.

\begin{proposition}
  \label{prop:K2}
The action of $H_0^{-1}$ on $\V^{\otimes d}$ in Lemma~\ref{lem:HB} is realized via the $K$-matrix as $\K\otimes \text{Id}^{\otimes d-1}$.
\end{proposition}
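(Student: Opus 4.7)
The plan is to reduce the statement to the $d=1$ case and then compare eigenvalues on the two-piece $\Ui$-module decomposition of $\V$ from Lemma~\ref{lem:Vpm}. Observe first that the action of $H_0$ on $\V^{\otimes d}$ given in Lemma~\ref{lem:HB} depends only on the first tensor factor $v_{f(1)}$, so $H_0^{-1}$ acts as $(\text{some operator on }\V) \otimes \text{Id}^{\otimes d-1}$. Since $\K \otimes \text{Id}^{\otimes d-1}$ obviously has the same form, it suffices to verify the identity $H_0^{-1}|_{\V} = \K|_{\V}$ on the single-factor module $\V$.

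To carry out the comparison, I would use the decomposition $\V = (\V_\bu \oplus \V_+) \oplus \V_-$ of Lemma~\ref{lem:Vpm}, which is simultaneously stable under the $\Ui$-action and the $H_0$-action. On this decomposition, Lemma~\ref{lem:H0} tells us that $H_0$ acts as $p \cdot \text{Id}$ on $\V_\bu \oplus \V_+$ and as $(-p^{-1}) \cdot \text{Id}$ on $\V_-$. Inverting, $H_0^{-1}$ acts as $p^{-1} \cdot \text{Id}$ on $\V_\bu \oplus \V_+$ and as $(-p) \cdot \text{Id}$ on $\V_-$. The lemma immediately preceding the proposition computes the action of $\K$ on $\V$ and gives exactly the same two scalars: $p^{-1}$ on $\V_\bu \oplus \V_+$ and $-p$ on $\V_-$. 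So on each summand the two operators agree, which establishes the claim.

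There is essentially no hard step here: once one sees that both $H_0^{-1}$ and $\K$ are scalar on each summand of the $\Ui$-stable decomposition $\V = (\V_\bu \oplus \V_+) \oplus \V_-$, the proof is a one-line matching of scalars using Lemma~\ref{lem:H0} and the explicit computation of $\K$. The conceptually substantive part of this story (which has already been done in the preceding lemmas) is the choice of parameters \eqref{eq:para}--\eqref{eq:para0} and of the normalization $\xi(\epsilon_{n+r-\frac12})=1$; these were rigged precisely so that $\K$ would reproduce the Hecke eigenvalues $-p$ and $p^{-1}$ matching $H_0^{-1}$. Thus the proof is essentially a bookkeeping consequence of our previous setup, tying together the $\imath$Schur duality picture with Balagovic--Kolb's general construction of the $K$-matrix \cite{BK19}.
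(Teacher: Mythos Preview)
Your proposal is correct and is exactly the intended argument: the paper does not spell out a proof for this proposition, but it follows immediately from the unnumbered lemma just before it (computing $\K$ as $-p$ on $\V_-$ and $p^{-1}$ on $\V_\bu\oplus\V_+$) together with Lemma~\ref{lem:H0}, after the evident reduction to $d=1$. Your write-up makes this explicit and matches the paper's setup precisely.
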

In case $m=0$ or $1$, Proposition~\ref{prop:K2} is established in \cite{BW18a, BWW18}. The property of a K-matrix in Proposition~\ref{prop:K} also provides a conceptual explanation for the commutativity of $H_0$ and $\Ui$ acting on $\V^{\otimes d}$. 

\subsection{$\io$Schur algebra}

We formulate the $\io$Schur algebra arising from $\imath$Schur duality.

\begin{definition}
The $\io$Schur algebra $\sch(r| \nb|r,d)$ is defined to be 
$$\sch(r| \nb|r,d)= \End_{\HB}(\V^{\otimes d})=\Psi(\Ui).$$
(The second equality follows by the double centralizer property in Theorem~\ref{thm:UiHB}.)
\end{definition}
	
\begin{remark}
When $r=0$, our $\io$Schur algebra specializes to $q$-Schur algebra of type A \cite{DJ89}. 
When $\nb=0$ or $1$, our $\io$Schur algebra specializes to the quasi-split $\io$Schur algebra in \cite{Gr97, BW18a, BKLW18, LL21}.
\end{remark}

\begin{lemma}
There exists a unique (anti-linear) bar involution $~^-$ on $\sch(r| \nb|r,d)$ such that
$$\bar \rho(M_{g'}h)=\delta_{g,g'}\iba \big( \rho(M_{g'}) \big) h,\quad \forall h\in \Hy_{B_d},\ g' \in \Ianti,
$$
for any $\rho\in Hom_{\Hy_{B_d}}(\M_g,\M_f)\subset \sch(r| \nb|r,d)$, and any $f, g \in \Ianti$. 
\end{lemma}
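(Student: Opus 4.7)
The plan is to use the decomposition
\[
\sch(r|\nb|r,d)=\End_{\HB}(\V^{\otimes d})=\bigoplus_{f,g\in\Ianti}\mathrm{Hom}_{\HB}(\M_g,\M_f),
\]
coming from $\V^{\otimes d}=\bigoplus_{f\in\Ianti}\M_f$ in \eqref{eq:decomp}, and to define $\bar{\,\cdot\,}$ componentwise. Namely, for $\rho\in\mathrm{Hom}_{\HB}(\M_g,\M_f)$, I set $\bar\rho|_{\M_{g'}}=0$ for $g'\neq g$, and define $\bar\rho\colon\M_g\to\M_f$ on the cyclic generator by $\bar\rho(M_gh)=\iba(\rho(M_g))h$; then I extend over $\Q(q)$ anti-linearly. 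The formula in the statement then holds by construction, and uniqueness is immediate since the formula pins down $\bar\rho$ on each generator $M_{g'}$ of each summand $\M_{g'}$, and $\HB$-linearity and the decomposition force the rest.

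The main obstacle is well-definedness of $\bar\rho\colon\M_g\to\M_f$, since the presentation $\M_g=M_g\HB$ carries the relations $\mathrm{Ann}_{\HB}(M_g)$. My key claim is that $\mathrm{Ann}_{\HB}(M_g)$ is stable under the bar involution of $\HB$: from $\iba(M_g)=M_g$ and the compatibility $\iba(xh)=\iba(x)\bar h$ of Proposition~\ref{prop:iHba}, if $M_gh=0$ then $M_g\bar h=\iba(M_g)\bar h=\iba(M_gh)=0$, so $\bar h\in\mathrm{Ann}_{\HB}(M_g)$. With this in hand, suppose $M_gh=M_gh'$. Since $\rho$ is $\HB$-linear, $h-h'\in\mathrm{Ann}_{\HB}(M_g)\subseteq\mathrm{Ann}_{\HB}(\rho(M_g))$, and by the bar stability just noted also $\overline{h-h'}\in\mathrm{Ann}_{\HB}(\rho(M_g))$; thus $\rho(M_g)\overline{h-h'}=0$, and applying $\iba$ one more time (using compatibility with the bar of $\HB$ and $\iba^2=\mathrm{id}$) yields $\iba(\rho(M_g))(h-h')=0$. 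Hence $\bar\rho$ is well-defined.

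The remaining checks are routine. The map $\bar\rho$ is $\HB$-linear by construction: $\bar\rho((M_gh)k)=\iba(\rho(M_g))(hk)=\bar\rho(M_gh)\cdot k$. It is involutive since $\iba^2=\mathrm{id}$ on $\V^{\otimes d}$: $\bar{\bar\rho}(M_gh)=\iba(\iba(\rho(M_g)))h=\rho(M_g)h=\rho(M_gh)$. Anti-linearity in $\Q(q)$ follows from that of $\iba$, since $(\overline{c\rho})(M_gh)=\iba(c\rho(M_g))h=\bar c\,\iba(\rho(M_g))h=\bar c\bar\rho(M_gh)$. Finally, multiplicativity $\overline{\rho_1\rho_2}=\bar\rho_1\bar\rho_2$ is a brief computation: for $\rho_1\in\mathrm{Hom}_{\HB}(\M_g,\M_f)$ and $\rho_2\in\mathrm{Hom}_{\HB}(\M_e,\M_g)$, write $\rho_2(M_e)=M_gh_0$ for some $h_0\in\HB$; then $\overline{\rho_1\rho_2}(M_e)=\iba(\rho_1(M_g)h_0)=\iba(\rho_1(M_g))\bar h_0$, while $\bar\rho_1\bar\rho_2(M_e)=\bar\rho_1(\iba(M_gh_0))=\bar\rho_1(M_g\bar h_0)=\iba(\rho_1(M_g))\bar h_0$, again using $\iba(M_g)=M_g$ and Proposition~\ref{prop:ioba}.
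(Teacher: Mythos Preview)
Your proof is correct and rests on the same ingredients as the paper's: the identity $\iba(M_g)=M_g$ and the compatibility $\iba(xh)=\iba(x)\bar h$ from Proposition~\ref{prop:iHba}. The paper, however, packages well-definedness more succinctly: it observes in one line that
\[
\iba\big(\rho(M_{g'})\big)h
=\iba\big(\rho(M_{g'})\bar h\big)
=\iba\big(\rho(M_{g'}\bar h)\big)
=\iba\big(\rho(\iba(M_{g'}h))\big),
\]
so that in fact $\bar\rho=\iba\circ\rho\circ\iba$ as a map $\M_g\to\M_f$. Since this is a composition of well-defined maps, there is nothing further to check; involutivity, anti-linearity, and multiplicativity are then immediate from $\iba^2=\mathrm{id}$. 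Your annihilator-stability argument reaches the same destination and is perfectly valid, but it is worth noticing that it is secretly proving exactly this closed formula; recognizing $\bar\rho=\iba\circ\rho\circ\iba$ up front would let you skip the separate verifications.
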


\begin{proof}
We first check that the map $\bar \rho$  is well defined. Indeed, 
\[
\iba \big( \rho(M_{g'}) \big) h
=\iba \big( \rho(M_{g'}) \bar h \big) 
=\iba \big( \rho(M_{g'} \bar h) \big) 
=\iba \big( \rho(\iba(M_{g'} h) ) \big).
\]
The last expression above depend on $M_{g'} h$ (not just $h$), and so $\bar \rho$ is well defined. By this last expression it is also clear that $~^-$ on $\sch(r| \nb|r,d)$ is anti-linear and it is an involution. 
\end{proof}

\begin{remark}
The $\io$Schur algebras $\sch(r| \nb,d)$ are Morita equivalent to (but not isomorphic to) various versions of $(Q,q)$-Schur (or $q$-Schur$^2$) algebras studied in~\cite{DJM98} and~\cite{DS00}. The $\HB$-module $\V^{\otimes d}$ is a direct sum of quasi-permutation modules somewhat different  from those considered {\em loc. cit.}, but the results {\em loc. cit.} can be used to provide a basis for  $\sch(r| \nb,d)$.

The current work leads to the natural question of establishing a canonical basis for the $\io$Schur algebra $\sch(r| \nb,d)$ and developing its connection to the $\imath$canonical basis on the modified $\imath$quantum group $\dot \U^\imath$.
\end{remark}

\section{An inversion formula for quasi-parabolic KL polynomials}
  \label{sec:inversion}
  
In this section we prove an inversion formula for quasi-parabolic KL polynomials, generalizing \cite{KL79} and \cite{Do90}; also cf. \cite{So97}. Inspired by the type A works \cite{Br06} and \cite{CL16}, our approach is based on the tensor module formulation and uses the $\imath$Schur duality. 

\subsection{Symmetries $\varrho$, $\sigma_\imath'$ and $\sigma_\imath$}

Let $(\cdot,\cdot)$ denote the standard symmetric bilinear form on $\V^{\otimes d}$ defined by
\begin{equation}
 \label{eq:MMBF}
(M_f,M_g)=\delta_{f,g},\ \forall f,g\in \Ibw^d.
\end{equation}

We recall several symmetries of $\U$; cf. \cite{Lus93}.
\begin{lemma}
(1) There is an anti-involution $\varrho$ of $\U$ such that, for $i\in I, \mu \in Y$, 
\begin{equation}
\varrho(E_i)=q^{-1}F_iK_i,\quad \varrho(F_i)=q^{-1}E_iK_i^{-1},\quad  \varrho(K_\mu)=K_\mu.
\end{equation}
(2) There is an anti-involution $\sigma$ of $\U$ such that, for $i\in I, \mu \in Y$, 
\begin{equation}
\sigma(E_i)=E_{i},\quad  \sigma(F_i)=F_{i},\quad  \sigma(K_\mu)=K_{-\mu}.
\end{equation}
\end{lemma}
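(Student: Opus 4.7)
Both parts assert the existence and involutivity of prescribed $\Q(q)$-linear anti-homomorphisms of $\U$. In each case the strategy is standard: extend the formula on generators to the free associative algebra, verify that each Drinfeld--Jimbo defining relation is respected (so the map descends to $\U$), and then compute the square on generators to confirm involutivity. I would check the relations in the order: torus relations $K_\mu K_\nu = K_{\mu+\nu}$, the $K$--$E$ and $K$--$F$ commutations, the commutator $[E_i,F_j] = \delta_{ij}(K_i-K_i^{-1})/(q-q^{-1})$, and finally the $q$-Serre relations $S_{ij}(E_i,E_j)=S_{ij}(F_i,F_j)=0$.

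For part (1), since $\varrho$ reverses products and fixes the $K_\mu$'s, the torus relations are trivial, and the $K$--$E$ and $K$--$F$ relations reduce, after moving the $K$'s to one side using themselves, to the original relations with matching $q$-power prefactors. The commutator relation transforms under $\varrho$ into an expansion of $[E_j K_j^{-1}, F_i K_i]$ up to a scalar $q^{-2}$, which one collapses directly to $\delta_{ij}(K_i - K_i^{-1})/(q-q^{-1})$ using the original $[E_j,F_i]$ and the $K$-commutations. For the $q$-Serre relation $S_{ij}(E_i,E_j)=0$, applying $\varrho$ and pulling all $K$'s to the right produces, up to an overall invertible scalar, the polynomial $S_{ij}(F_i,F_j)$, which vanishes by hypothesis. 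Involutivity is then a one-line check on generators:
\[
\varrho^2(E_i) = \varrho(q^{-1}F_i K_i) = K_i \cdot q^{-1}E_i K_i^{-1} \cdot q^{-1} = q^{-2}\, q^{(\alpha_i,\alpha_i)}\, E_i = E_i
\]
using $(\alpha_i,\alpha_i)=2$, and analogously $\varrho^2(F_i)=F_i$, while $\varrho^2(K_\mu)=K_\mu$ is obvious.

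For part (2) the verification is analogous and simpler. The $K$--$E$ and $K$--$F$ relations are preserved because $\sigma(K_\mu)=K_{-\mu}$ flips the sign of $(\mu,\alpha_i)$ in the exponent. The commutator check reduces to the identity
\[
\sigma([E_i,F_j]) = F_j E_i - E_i F_j = -\tfrac{K_i - K_i^{-1}}{q-q^{-1}}\delta_{ij} = \tfrac{K_{-i} - K_{-i}^{-1}}{q - q^{-1}} \delta_{ij} \cdot (-1)\cdot(-1) = \sigma\!\left(\tfrac{K_i - K_i^{-1}}{q-q^{-1}}\right)\delta_{ij}.
\]
For the $q$-Serre relation, the reindexing $s \mapsto 1-a_{ij}-s$ in the defining sum of $S_{ij}$ shows that $\sigma(S_{ij}(E_i,E_j)) = (-1)^{1-a_{ij}} S_{ij}(E_i,E_j)$, hence vanishes; the $F$-side is identical. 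Involutivity on generators is immediate.

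The main obstacle in both parts is the commutator relation $[E_i,F_j]$, where one must genuinely commute $K$'s past $E$'s and $F$'s and combine $q$-powers carefully, together with the use of $(\alpha_i,\alpha_i)=2$; everything else is essentially formal bookkeeping. Both $\varrho$ and $\sigma$ are well known from \cite{Lus93}, so beyond presenting these checks I would simply cite the relevant portions (e.g.\ \cite[19.1.1]{Lus93}) for the reader's convenience.
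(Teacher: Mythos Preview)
Your proposal is correct, and in fact it provides considerably more than the paper does: the paper treats this lemma as a well-known fact and simply records it with the citation ``cf.\ \cite{Lus93}'' without any proof. Since you likewise conclude by citing \cite{Lus93}, your approach is fully compatible with the paper's; the detailed relation-by-relation verification you outline is the standard one and would serve as a fine expanded proof if the paper had chosen to include one.
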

The bilinear form $(\cdot,\cdot)$ on $\V^{\otimes d}$ defined by
\eqref{eq:MMBF} satisfies  (cf. \cite{Lus93})
\begin{equation}
\label{bisym-equa}
(u x, y)=(x, \varrho(u)y),
\end{equation}
 for all $x, y \in \V^{\otimes d},$ and $u \in \U$.
 
 Following \cite[\S 3.6.2]{BW21}, we consider an anti-linear anti-involution $\sigma_\imath'$ of $\U$ such that
\begin{align}  \label{eq:isigma1}
\sigma_\io' =\sigma \circ \tau \circ \psi. 
\end{align}
Note the (anti-)involutions $\sigma, \tau,$ and $\psi$ commute with each other. 
\begin{lemma}
\label{lem:DD}
The maps $\sigma_\io'$ and $\varrho$ are coalgebra morphisms, that is, 
\begin{align*}
(\sigma_\io'\otimes \sigma_\imath')\Delta(u) &=\Delta(\sigma_\imath'(u)),
\\
(\varrho \otimes \varrho)\Delta(u) &=\Delta(\varrho(u)),
\quad \text{ for all } u\in \U.
\end{align*}
\end{lemma}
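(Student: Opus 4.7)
The plan is to verify both equalities on the algebra generators $E_i$, $F_i$, $K_\mu$ of $\U$ and then extend to all of $\U$ by a multiplicativity argument. Both $\sigma_\io'$ and $\varrho$ are anti-algebra maps, and $\Delta$ is an algebra homomorphism, so $\Delta\circ\alpha$ is anti-multiplicative for $\alpha\in\{\sigma_\io',\varrho\}$. A direct check shows that $\alpha\otimes\alpha$ is also anti-multiplicative on $\U\otimes\U$ (with its standard tensor algebra structure): for $a,b,c,d\in\U$,
\[
(\alpha\otimes\alpha)\bigl((a\otimes b)(c\otimes d)\bigr)=\alpha(c)\alpha(a)\otimes\alpha(d)\alpha(b)=(\alpha\otimes\alpha)(c\otimes d)\cdot (\alpha\otimes\alpha)(a\otimes b),
\]
so $(\alpha\otimes\alpha)\circ\Delta$ is anti-multiplicative as well. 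Consequently, any identity of the form $\Delta\circ\alpha=(\alpha\otimes\alpha)\circ\Delta$ is an identity between two anti-multiplicative maps, hence suffices to be checked on a set of algebra generators. For $\sigma_\io'$, which is also anti-linear, the same reasoning applies with ``anti-linear anti-multiplicative'' in place of ``anti-multiplicative''.

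First I would compute the action of $\sigma_\io'=\sigma\circ\tau\circ\psi$ on the generators. Using that $\psi$ fixes $E_i,F_i$ and sends $K_\mu\mapsto K_{-\mu}$, that $\tau$ permutes indices by the diagram involution, and that $\sigma$ fixes $E_i,F_i$ while sending $K_\mu\mapsto K_{-\mu}$, one obtains
\[
\sigma_\io'(E_i)=E_{\tau(i)},\qquad \sigma_\io'(F_i)=F_{\tau(i)},\qquad \sigma_\io'(K_\mu)=K_{\tau(\mu)}.
\]
Plugging these into \eqref{eq:Delta}, the identity on generators reduces to compatibility of the diagram involution $\tau$ with the comultiplication formulas; for instance,
\[
(\sigma_\io'\otimes\sigma_\io')\Delta(E_i)=E_{\tau(i)}\otimes 1+K_{\tau(i)}\otimes E_{\tau(i)}=\Delta(E_{\tau(i)})=\Delta(\sigma_\io'(E_i)),
\]
and the checks for $F_i$ and $K_\mu$ are entirely analogous.

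For $\varrho$, the identity on $K_\mu$ is immediate since $\varrho(K_\mu)=K_\mu$ and $\Delta(K_\mu)=K_\mu\otimes K_\mu$. For $E_i$, using $\varrho(E_i)=q^{-1}F_iK_i$ and multiplicativity of $\Delta$, I would compute
\[
\Delta(\varrho(E_i))=q^{-1}\Delta(F_i)\Delta(K_i)=q^{-1}(F_i\otimes K_i^{-1}+1\otimes F_i)(K_i\otimes K_i)=q^{-1}(F_iK_i\otimes 1+K_i\otimes F_iK_i),
\]
and compare with $(\varrho\otimes\varrho)\Delta(E_i)=\varrho(E_i)\otimes 1+\varrho(K_i)\otimes\varrho(E_i)=q^{-1}F_iK_i\otimes 1+K_i\otimes q^{-1}F_iK_i$; these match. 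The case of $F_i$ is handled symmetrically via $\varrho(F_i)=q^{-1}E_iK_i^{-1}$, giving $\Delta(\varrho(F_i))=q^{-1}(E_iK_i^{-1}\otimes K_i^{-1}+1\otimes E_iK_i^{-1})=(\varrho\otimes\varrho)\Delta(F_i)$.

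There is no serious obstacle: the lemma reduces to routine generator-level computations. The only point requiring any care is the initial observation that $\alpha\otimes\alpha$ is anti-multiplicative when $\alpha$ is, which is what justifies reducing a seemingly global identity to a check on generators.
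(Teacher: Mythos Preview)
Your proof is correct and follows essentially the same approach as the paper: reduce to a generator check using the (anti-)multiplicativity of both sides. The only organizational difference is that the paper first factors $\sigma_\io'=\sigma\psi\circ\tau$ and verifies the coalgebra identity separately for $\sigma\psi$ and for $\tau$ before composing, whereas you compute $\sigma_\io'$ on generators directly; your version is slightly more explicit about why the generator check suffices.
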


\begin{proof}
It is straightforward to check on generators $u\in \U$ that
\begin{align*}
(\sigma \psi \otimes \sigma \psi)\Delta(u) &=\Delta(\sigma \psi(u)), 
\\
(\tau \otimes \tau)\Delta(u) &=\Delta(\tau (u)).
\end{align*}
Hence these 2 identities hold for all $u\in \U$ since $\sigma\psi$ and $\tau$ are (anti-)involutions on $\U$. The lemma now follows from by definition of $\sigma_\io' =\sigma \psi \tau$ in \eqref{eq:isigma1} and these identities.

The (well known) statement that $\varrho$ is a coalgebra morphism (cf. \cite{CL16}) can also be checked on the generators of $\U$ directly. 
\end{proof}

By the proof of \cite[Proposition 3.13]{BW21}, $\sigma_\io'$ defined in \eqref{eq:isigma1} preserves the subalgebra $\Ui$ of $\U$. Note that $\iba$ and $\sigma_\io'$ commute on $\Ui$.
\begin{lemma} \cite[Proposition 3.13]{BW21}
 \label{lem:invol2} 
We have an anti-linear anti-involution $\sigma_\io'$ of $\Ui$ by restriction and 
a $\Q(q)$-linear anti-involution $\sigma_\io$ of $\Ui$ given by
\begin{align}  \label{eq:isigma}
\sigma_\imath=\iba\circ \sigma_\io'. 
\end{align}
\end{lemma}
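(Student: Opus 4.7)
The plan is to split the statement into two essentially formal consequences of two inputs: (a) $\sigma_\io'$, as an operation on the ambient $\U$, is an anti-linear anti-involution that restricts to $\Ui$; and (b) $\iba$ and $\sigma_\io'$ commute on $\Ui$. Given these two facts, parts (1) and (2) of the lemma follow by tracking linearity and (anti-)multiplicativity under composition.

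For part (1), I would first observe that $\sigma$, $\tau$, $\psi$ pairwise commute on $\U$ (a one-line verification on the generators $E_i, F_i, K_\mu$). Since $\sigma$ is a $\Q(q)$-linear anti-involution of $\U$, $\tau$ is a $\Q(q)$-linear involution, and $\psi$ is an anti-linear involution, the composition $\sigma_\io' = \sigma\circ\tau\circ\psi$ is an anti-linear anti-involution of $\U$. For the restriction, I would check on the generators of $\Ui$: a direct calculation gives $\sigma_\io'(E_i) = E_{-i}$, which lies in $\Ui$ since $I_\bu$ is stable under $i\mapsto -i$; $\sigma_\io'(K_\mu) = K_{\tau(\mu)}$, which lies in $\Ui$ because $Y^\imath$ is stable under $\tau$ (using that $\tau w_\bu = w_\bu \tau$, an automorphism property of the diagram of the Levi indexed by $I_\bu$); and finally the decisive case $\sigma_\io'(B_j)$, which I would identify with an element of $\Ui$ by using the formula $B_j = F_j + \va_j T_{w_\bu}(E_{\tau(j)}) K_j^{-1}$ together with the action of $\sigma\tau\psi$ on the braid operator $T_{w_\bu}$ and the parameter constraints \eqref{eq:para}--\eqref{eq:para0}. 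This last computation is what is carried out in \cite[Proposition 3.13]{BW21} and can simply be invoked here.

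For part (2), I would verify the commutation $\iba \circ \sigma_\io' = \sigma_\io' \circ \iba$ on the generators $E_i$ ($i\in I_\bu$), $K_\mu$ ($\mu\in Y^\imath$), and $B_j$ ($j\in I_\circ$) of $\Ui$; the cases involving $E_i$ and $K_\mu$ are immediate from the explicit formulas, while the $B_j$ case uses the $\iba$-invariance of $B_j$ (Lemma~\ref{lem:iba}) together with the formula for $\sigma_\io'(B_j)$ obtained in part (1). With commutativity in hand, $\sigma_\io = \iba \circ \sigma_\io'$ is $\Q(q)$-linear (composition of two anti-linear maps), anti-multiplicative (composition of an algebra automorphism $\iba$ with an anti-automorphism $\sigma_\io'$), and an involution by the calculation
\[
\sigma_\io^2 = \iba \sigma_\io' \iba \sigma_\io' = \iba^2 (\sigma_\io')^2 = \mathrm{Id}.
\]

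The one genuine obstacle is the verification that $\sigma_\io'(B_j) \in \Ui$ for $j \in I_\circ$, since this requires tracking the interplay of $\sigma$, $\tau$, and $\psi$ with $T_{w_\bu}$ and the coefficients $\va_j$; everything else is essentially formal bookkeeping about linearity and multiplicativity. Since this restriction statement is exactly the content of \cite[Proposition 3.13]{BW21}, the proof reduces, modulo a reference, to the elementary derivation of the (anti-)linearity and (anti-)multiplicativity properties sketched above.
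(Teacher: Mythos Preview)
Your proposal is correct and mirrors the paper's treatment: the paper does not give a proof but simply cites \cite[Proposition~3.13]{BW21} for the restriction of $\sigma_\io'$ to $\Ui$ and notes (in the sentence preceding the lemma) that $\iba$ and $\sigma_\io'$ commute on $\Ui$. Your write-up just makes explicit the formal bookkeeping (anti-linearity, anti-multiplicativity, involutivity) that the paper leaves implicit, while invoking the same reference for the only substantive step $\sigma_\io'(B_j)\in\Ui$.
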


\subsection{Quasi R-matrix $\Theta^\io$}

Recall the quasi K-matrix $\up$ from Proposition~\ref{prop:upsilon}. As in \cite[(3.1)]{BW18a}, we define the quasi R-matrix $\Theta^\io$ associated to the quantum symmetric pair $(\U, \Ui)$ by 
\begin{align*}
\Theta^\io &=\Delta(\up)\Theta(\up^{-1}\otimes 1).
\end{align*}
We also define
\begin{align}
\label{eq:iDelta}
\begin{split}
\bD: & \Ui\longrightarrow \Ui\otimes \U, \\
\bD(u) &=(\iba\otimes \psi)\Delta(\iba(u)),\ \forall u\in \Ui.
\end{split}
\end{align}

The fundamental properties of $\Theta^\io$ in Proposition~\ref{prop:iTheta}~ (1)-(2)  below were established in \cite[Propositions 3.2, 3.5]{BW18a} and generalized in \cite[Propositions 3.9-3.10]{Ko20}. The uniqueness below can be found in the proof of \cite[Propositions 3.7]{BW18a}, and in general can be derived from a variant of the interwining property given by \cite[(3.28)]{Ko20}.

\begin{proposition} (cf. \cite{BW18a, Ko20}) 
\label{prop:iTheta}
\quad
\begin{enumerate}
\item
We have $\Theta^\io =\sum_{\mu\in \N I} \Theta^\io_\mu$, where $\Theta^\io \in \Ui \otimes \U_\mu^+$ and $\Theta^\io_0=1\otimes 1$. 

\item
$\Theta^\io$ satisfies that 
$
\Delta(u)\Theta^\io=\Theta^\io\bD(u).
$ 
\end{enumerate}
Moreover, an element $\Theta^\io$ of the form (1) satisfying the intertwining property (2) is unique.
\end{proposition}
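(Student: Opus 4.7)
The strategy is to derive all three claims from the defining formula $\Theta^\io = \Delta(\up)\,\Theta\,(\up^{-1}\otimes 1)$, combining the intertwining property of $\up$ from Proposition~\ref{prop:upsilon} with Lusztig's intertwining property of the quasi R-matrix, $\Theta\,\Delta(x) = \bD(x)\,\Theta$ for all $x\in\U$ (where as usual $\bD(x) = (\psi\otimes\psi)\Delta(\psi(x))$), together with the coideal property $\Delta(\Ui)\subset\Ui\otimes\U$.

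For part (1), I would unpack the weight decomposition. Writing $\up=\sum_\mu \up_\mu$ with $\up_\mu\in \U^+_\mu$ and $\up_0 = 1$, and $\Theta=\sum_\nu \Theta_\nu$ with $\Theta_\nu\in \U^-_{-\nu}\otimes \U^+_\nu$ and $\Theta_0=1\otimes 1$, the standard formula $\Delta(\U^+_\lambda)\subset \bigoplus_{\lambda'+\lambda''=\lambda}\U^+_{\lambda'}K_{\lambda''}\otimes \U^+_{\lambda''}$ shows that the second tensor factor of every homogeneous component of $\Theta^\io$ lies in $\U^+$, yielding $\Theta^\io = \sum_\mu \Theta^\io_\mu$ with $\Theta^\io_\mu\in \U\otimes \U^+_\mu$ and $\Theta^\io_0=1\otimes 1$. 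The subtler statement that the first tensor factor actually lies in $\Ui$ is then established recursively in tandem with part (2): once the intertwining property is verified modulo higher weight components, a weight comparison forces $\Theta^\io_\mu\in \Ui\otimes \U^+_\mu$, as carried out in \cite{BW18a, Ko20}.

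For part (2), I would compute $\Delta(u)\Theta^\io$ directly for $u\in\Ui$. Using the derived identity $v\,\up = \up\,\psi\iba(v)$ (a consequence of Proposition~\ref{prop:upsilon} applied to $\iba(v)\in\Ui$) and commuting $\Delta(\psi\iba(u))$ past $\Theta$ via the classical intertwining property, one obtains
\begin{equation*}
\Delta(u)\,\Theta^\io = \Delta(\up)\,\Theta\,(\psi\otimes\psi)\Delta(\iba(u))\,(\up^{-1}\otimes 1).
\end{equation*}
Writing $\Delta(\iba(u)) = \sum_i a_i\otimes b_i$, the coideal property forces $a_i\in\Ui$. Applying the rearrangement $\psi(v)\,\up^{-1} = \up^{-1}\,\iba(v)$ (derived from the intertwiner identity) to each $a_i$, we can slide $\up^{-1}$ to the left of the first tensor factor and rewrite the expression as $\Delta(\up)\,\Theta\,(\up^{-1}\otimes 1)\,(\iba\otimes\psi)\Delta(\iba(u)) = \Theta^\io\,\bD(u)$, as desired. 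The main technical point is exactly the invocation of the coideal property to guarantee $a_i\in\Ui$ so that the intertwiner identity is applicable; the surrounding weight bookkeeping is routine once the pieces are tracked carefully.

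For the uniqueness clause, suppose $\Theta^{\io\prime}$ also satisfies (1) and (2). Then $\Xi := (\Theta^\io)^{-1}\Theta^{\io\prime} = \sum_\mu \Xi_\mu$ with $\Xi_\mu\in \Ui\otimes \U^+_\mu$, $\Xi_0 = 1\otimes 1$, and $\bD(u)\,\Xi = \Xi\,\bD(u)$ for all $u\in\Ui$. Taking $u$ to be the generators $B_j$ ($j\in I_\circ$) or $E_i$ ($i\in I_\bu$) and projecting onto the weight-$\mu$ component yields a recursion determining $\Xi_\mu$ from the $\Xi_{\mu'}$ with $\mu'<\mu$; induction gives $\Xi_\mu = 0$ for $\mu\neq 0$, and hence $\Theta^{\io\prime} = \Theta^\io$. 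The main obstacle throughout is keeping track of the interplay between $\psi$, $\iba$, and $\Delta$ on $\Ui$; the coideal property together with the weight grading on $\U^+$ packages this cleanly and mirrors the uniqueness argument for Lusztig's quasi R-matrix.
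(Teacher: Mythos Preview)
The paper does not supply its own proof of this proposition; the sentence immediately preceding it attributes parts (1)--(2) to \cite[Propositions~3.2, 3.5]{BW18a} and \cite[Propositions~3.9--3.10]{Ko20}, and the uniqueness to the proof of \cite[Proposition~3.7]{BW18a} and \cite[(3.28)]{Ko20}. Your sketch reconstructs exactly the argument of those references: expand $\Theta^\io=\Delta(\up)\Theta(\up^{-1}\otimes 1)$, use the intertwiner identity $\iba(u)\up=\up\psi(u)$ twice (once at $\Delta(\up)$ and once at $\up^{-1}\otimes 1$), invoke Lusztig's intertwining property of $\Theta$ in the middle, and apply the coideal property $\Delta(\Ui)\subset\Ui\otimes\U$ so that $\iba$ may be applied to the first tensor factor. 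The uniqueness via a recursion on $\mu$ is likewise the argument in \cite{BW18a}. So your approach and the paper's (cited) proof coincide.

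Two small points of hygiene. First, you use the symbol $\bD$ both for Lusztig's $\bar\Delta=(\psi\otimes\psi)\Delta\psi$ on $\U$ and for the paper's $\bD=(\iba\otimes\psi)\Delta\iba$ on $\Ui$ from \eqref{eq:iDelta}; these are different maps and should carry distinct names. Second, watch the direction of Lusztig's identity: the form you \emph{state}, $\Theta\,\Delta(x)=\bar\Delta(x)\,\Theta$, is not the form you \emph{use} in the displayed computation, which requires $\Delta(x)\,\Theta=\Theta\,\bar\Delta(x)$. The two are related via $\Theta\leftrightarrow\Theta^{-1}$ (recall $\overline{\Theta}=\Theta^{-1}$), so the argument goes through once you fix a single convention and stick to it.
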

 

The following new property of $\Theta^\io$ is actually valid for a general quantum symmetric pair as in \cite{BW21}. It will play a role in the proof of Theorem~\ref{thm:bisym} below.

\begin{lemma}
\label{lem:iTheta2}
We have
$(\sigma_\io\otimes \sigma \tau)(\Theta^\io)=\Theta^\io$.
\end{lemma}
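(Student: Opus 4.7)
The plan is to exploit the uniqueness clause in Proposition~\ref{prop:iTheta}. Setting $\Theta' := (\sigma_\io\otimes \sigma\tau)(\Theta^\io)$, I aim to show that (a)~$\Theta'$ has the shape required by part~(1) of that proposition, and (b)~$\Theta'$ satisfies the intertwining identity $\Delta(u)\Theta' = \Theta'\bD(u)$ for all $u\in \Ui$. The uniqueness statement then forces $\Theta' = \Theta^\io$, which is the desired identity.

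The shape condition is cheap: because $\sigma$ fixes each weight piece $\U^+_\mu$ and $\tau$ sends $\U^+_\mu$ to $\U^+_{\tau(\mu)}$, the operator $\sigma\tau$ permutes the grading pieces indexed by $\mu\in \N I$, so $\Theta'\in \bigoplus_{\mu\in \N I}\Ui\otimes \U^+_\mu$. Since $\sigma_\io(1)=1=\sigma\tau(1)$, the degree-zero component of $\Theta'$ is $1\otimes 1$.

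The technical heart is a bookkeeping identity that I will extract from the various (anti-)involutions in play. The involutions $\psi$, $\sigma$, $\tau$ pairwise commute on $\U$, so $\sigma_\io' = \sigma\tau\psi = \psi\sigma\tau$, and the commutation of $\iba$ with $\sigma_\io'$ on $\Ui$ (stated before Lemma~\ref{lem:invol2}) gives $\sigma_\io\iba = \iba\sigma_\io'\iba = \sigma_\io'$. Putting these together yields the key factorization
\[
\sigma_\io\otimes \sigma\tau \;=\; (\iba\otimes\psi)\circ(\sigma_\io'\otimes \sigma_\io').
\]
Combining this factorization with Lemma~\ref{lem:DD} (which asserts $\sigma_\io'$ is a coalgebra morphism of $\U$) and with the definition \eqref{eq:iDelta} of $\bD$, I expect to get, for every $v\in \Ui$,
\[
(\sigma_\io\otimes \sigma\tau)(\Delta(v)) = \bD(\sigma_\io(v)),
\qquad
(\sigma_\io\otimes \sigma\tau)(\bD(v)) = \Delta(\sigma_\io(v)).
\]

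Granted these two identities, the conclusion is mechanical. Since $\sigma_\io\otimes\sigma\tau$ is an anti-algebra map on $\Ui\otimes \U$, applying it to $\Delta(v)\Theta^\io = \Theta^\io\bD(v)$ and reversing factors gives $\Theta'\,\bD(\sigma_\io(v)) = \Delta(\sigma_\io(v))\,\Theta'$. As $v$ ranges over $\Ui$ so does $u:=\sigma_\io(v)$, and this is exactly the intertwining property $\Delta(u)\Theta' = \Theta'\bD(u)$ required by Proposition~\ref{prop:iTheta}(2). The uniqueness then delivers $\Theta' = \Theta^\io$. The only real obstacle is the careful bookkeeping of how $\iba$, $\psi$, $\sigma$, $\tau$, $\sigma_\io'$, $\sigma_\io$ interact and how $(\iba\otimes \psi)$ twists $\Delta$ into $\bD$ — once the displayed factorization above is established, everything else is formal.
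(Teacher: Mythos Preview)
Your proposal is correct and follows essentially the same approach as the paper: both verify that $(\sigma_\io\otimes\sigma\tau)(\Theta^\io)$ satisfies the shape condition and the intertwining identity of Proposition~\ref{prop:iTheta}, then invoke uniqueness. Your presentation is slightly more streamlined in that you isolate the factorization $\sigma_\io\otimes\sigma\tau = (\iba\otimes\psi)\circ(\sigma_\io'\otimes\sigma_\io')$ at the outset and derive the two displayed identities for $\Delta$ and $\bD$ before applying them, whereas the paper carries out the same manipulations step by step inline; the underlying computation is identical.
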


\begin{proof}
Denote $\check\Theta^\io =(\sigma_\io\otimes \sigma \tau) (\Theta^\io)$, which is well defined thanks to Lemma~\ref{lem:invol2} and Proposition~\ref{prop:iTheta}(1).

Applying the {\em anti}-involution $\sigma_\io\otimes \sigma \tau$ to the identity $\Delta(u)\Theta^\io=\Theta^\io\bD(u)$ (see Proposition~\ref{prop:iTheta}), we obtain
\begin{align*}
\check\Theta^\io \ (\sigma_\io\otimes \sigma \tau) \Delta(u)  
&= (\sigma_\io\otimes \sigma \tau) \bD(u) \ \check\Theta^\io,
\end{align*}
which can be rewritten as 
\begin{align*}
\check\Theta^\io \ (\iba \otimes \psi) (\sigma_\io' \otimes \sigma_\io') \Delta(u)  
&= (\sigma_\io' \otimes \sigma_\io') \Delta (\iba (u)) \ \check\Theta^\io.
\end{align*}
Applying Lemma~\ref{lem:DD} to the above identity, we obtain
\begin{align*}
\check\Theta^\io \ (\iba \otimes \psi)  \Delta(\sigma_\io'(u) )  
&= \Delta (\sigma_\io' \iba (u)) \ \check\Theta^\io. 
\end{align*}
Setting $x= \sigma_\io' \iba (u)= \iba \sigma_\io' (u)$, the above identity can be read in the notation of \eqref{eq:iDelta} as 
\begin{align*}
\check\Theta^\io \ \bD(x)  
&= \Delta (x) \ \check\Theta^\io,
\end{align*}
that is, $\check\Theta^\io$ satisfies the intertwining property in Proposition~\ref{prop:iTheta}(2).
Clearly, $\check\Theta^\io$ also satisfies Proposition~\ref{prop:iTheta}(1). 
It follows by the uniqueness (see Proposition~\ref{prop:iTheta})  that 
$\check\Theta^\io =\Theta^\io.$
\end{proof}

\subsection{A bilinear form $\la \cdot, \cdot \ra$}

We introduce an {\em anti}-linear map  
\begin{align}
\D\colon & \V^{\otimes d} \longrightarrow \V^{\otimes d},
 \label{eq:D}
\\
 & \D(M_f) =M_{-f}, \text{ for }  f\in \Ibw^d.
 \notag
\end{align}
We define a new bilinear form $\la\cdot,\cdot\ra$ on $\V^{\otimes d}$ in terms of the standard one $(\cdot,\cdot)$ in \eqref{eq:MMBF} by letting 
\begin{equation}
 \label{eq:BF}
\langle x,y \rangle:=(x,\D\circ\iba(y)), 
\ \forall x, y \in \V^{\otimes d}.
\end{equation}

The following lemma will also be used in the proof of Theorem \ref{thm:bisym}.
\begin{lemma}
\label{bisym-lemma2} 
For all $x \in \V^{\otimes d}$ and $u \in \U$, we have 
$\D(u x)=\varrho(\sigma_\io'(u))\D(x)$. 
\end{lemma}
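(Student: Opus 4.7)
The plan is to prove the identity $\D(ux) = \varrho(\sigma_\io'(u))\D(x)$ in two steps: first reduce the claim to the case $d = 1$ by an induction on the tensor rank, then verify the base case directly on the generators of $\U$. Before starting, I would note two facts about the operator $\D$ that make the induction clean: (a) since $\D$ is anti-linear and $\D(M_f) = M_{-f}$ is defined coordinate-wise, we have $\D(x \otimes y) = \D(x) \otimes \D(y)$ for all $x \in \V^{\otimes k}, y \in \V^{\otimes \ell}$; and (b) the composition $\varrho \circ \sigma_\io'$ is again a coalgebra morphism of $\U$, because both $\varrho$ and $\sigma_\io'$ are (by Lemma~\ref{lem:DD}). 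It is also convenient that the map $u \mapsto \varrho\sigma_\io'(u)$ is anti-linear and anti-multiplicative in $u$, so that verifying the target identity on algebra generators is enough: if it holds for $u$ and $v$, then $\D(uv \cdot x) = \D(u \cdot (vx)) = \varrho\sigma_\io'(u)\varrho\sigma_\io'(v)\D(x) = \varrho(\sigma_\io'(v)\sigma_\io'(u))\D(x) = \varrho\sigma_\io'(uv)\D(x)$, using that $\sigma_\io'$ and $\varrho$ are each anti-involutions.

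For the inductive step on $d$, I would decompose $\V^{\otimes d} = \V^{\otimes(d-1)}\otimes \V$, write $\Delta(u) = \sum u_{(1)} \otimes u_{(2)}$, and compute
\begin{align*}
\D\bigl(u \cdot (x \otimes y)\bigr)
&= \sum \D(u_{(1)} x) \otimes \D(u_{(2)} y) \\
&= \sum \varrho\sigma_\io'(u_{(1)})\D(x) \otimes \varrho\sigma_\io'(u_{(2)})\D(y) \\
&= (\varrho\sigma_\io' \otimes \varrho\sigma_\io')\Delta(u) \cdot \bigl(\D(x) \otimes \D(y)\bigr) \\
&= \Delta\bigl(\varrho\sigma_\io'(u)\bigr) \cdot \D(x \otimes y) = \varrho\sigma_\io'(u) \cdot \D(x \otimes y),
\end{align*}
where the penultimate equality uses Lemma~\ref{lem:DD} applied to $\varrho \circ \sigma_\io'$, and the last equality uses that $\U$ acts on $\V^{\otimes d}$ via $\Delta$. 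This reduces the lemma to the case $d = 1$.

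For the base case, I would check the identity on each of the generators $E_i$, $F_i$, and $K_\mu$ of $\U$ acting on $\V$, using the explicit formulas in \eqref{eq:natural}. Here $\sigma_\io'(E_i) = E_{-i}$, $\sigma_\io'(F_i) = F_{-i}$, $\sigma_\io'(K_\mu) = K_{-\tau(\mu)}$, and $\varrho$ converts $E$'s into weighted $F$'s and vice versa. For instance, for $u = E_i$ and $x = v_a$, the left side is $\D(\delta_{i+1/2,a}v_{a-1}) = \delta_{i+1/2,a}v_{1-a}$, while the right side is $q^{-1}F_{-i}K_{-i}v_{-a}$, which a short weight bookkeeping shows equals the same quantity; the computation for $F_i$ is symmetric, and $K_\mu$ is easy since both sides act as scalars. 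The main obstacle is purely notational: one must track the half-integer shift between the indexing sets $I$ (labels of simple roots) and $\Ibw$ (labels of weight vectors), so that the roles of $K_{-i}$, $F_{-i}$, and the shifts in $v_{a\pm 1}$ are aligned; once the conventions $\alpha_i = \epsilon_{i-1/2} - \epsilon_{i+1/2}$ are consistently applied, each generator check is a one-line verification.
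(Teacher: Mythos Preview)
Your proposal is correct and follows essentially the same approach as the paper: verify the identity on generators of $\U$ for $d=1$, then induct on $d$ using that $\varrho$ and $\sigma_\io'$ are coalgebra morphisms (Lemma~\ref{lem:DD}). One minor slip: you call $\varrho\sigma_\io'$ ``anti-multiplicative,'' but since both $\varrho$ and $\sigma_\io'$ are anti-multiplicative, their composite is multiplicative --- and indeed your displayed chain of equalities correctly establishes $\varrho\sigma_\io'(u)\varrho\sigma_\io'(v) = \varrho\sigma_\io'(uv)$, so the argument is sound despite the misstatement.
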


\begin{proof}
The formula in case of $d=1$ can be verified directly on $u$ being generators and $x=v_a$. 
The formula in general follows by induction on $d$ by noting by Lemma~\ref{lem:DD} that $\varrho$ and $\sigma_\io'$ are coalgebra morphisms. 
\end{proof}

\begin{theorem} 
\label{thm:bisym}
The bilinear form $\la\cdot,\cdot\ra$ on $\V^{\otimes d}$ given in \eqref{eq:BF} is symmetric.
\end{theorem}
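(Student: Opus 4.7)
The plan is to recast the symmetry of $\la\cdot,\cdot\ra$ as the self-adjointness of the $\Q(q)$-linear operator $\Phi:=\D\circ\iba$ with respect to the standard symmetric form $(\cdot,\cdot)$, and to verify this by induction on $d$. Indeed, $\Phi$ is $\Q(q)$-linear as the composition of the two anti-linear maps $\iba$ and $\D$, so $\la x,y\ra=(x,\Phi y)$ is $\Q(q)$-bilinear; and $\la x,y\ra=\la y,x\ra$ holds for all $x,y$ precisely when $(\Phi x,y)=(x,\Phi y)$.

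For the base case $d=1$, Lemma~\ref{lem:a} gives $\iba(v_a)=v_a$ for $a\in\Iwl\cup\Ib$ and $\iba(v_a)=v_a+(p^{-1}-p)v_{-a}$ for $a\in\Iwr$. Applying $\D$ anti-linearly yields
\[
\Phi(v_a)=\begin{cases} v_{-a}, & a\in \Iwl\cup \Ib,\\ v_{-a}+(p-p^{-1})v_a, & a\in \Iwr,\end{cases}
\]
and the symmetry $(\Phi v_a,v_b)=(v_a,\Phi v_b)$ is verified by inspection: the only off-diagonal coupling is between $v_a$ and $v_{-a}$ with coefficient $1$, and the only nonzero diagonal entries occur at $a\in \Iwr$ with value $p-p^{-1}$.

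For the inductive step $d\ge 2$, I view $\V^{\otimes d}=\V^{\otimes(d-1)}\otimes\V$ as an $\imath$-involutive $\Ui$-module whose bar admits the recursive description
\[
\iba(x\otimes v_b)=\Theta^\io\cdot(\iba(x)\otimes v_b),\qquad x\in\V^{\otimes(d-1)},\ b\in\Ibw,
\]
inherited from the general theory of tensor products of $\imath$-involutive and based modules (the Lusztig bar on $\V$ fixes the canonical basis). Writing $\Theta^\io=\sum_i a_i\otimes b_i$ with $a_i\in\Ui$ and $b_i\in \U^+$ as in Proposition~\ref{prop:iTheta}(1), and invoking Lemma~\ref{bisym-lemma2} in each tensor factor, I obtain the factorization
\[
\Phi(x\otimes v_b)=\big[(\varrho\sigma_\io'\otimes \varrho\sigma_\io')(\Theta^\io)\big]\cdot\big(\Phi_{d-1}(x)\otimes \D(v_b)\big).
\]
Pairing against $x'\otimes v_a$ and using the $\varrho$-adjointness $(uz,w)=(z,\varrho(u)w)$ to move the twisted $\Theta^\io$ to the other argument should, after reorganization, reduce the claim to the inductive hypothesis on $\V^{\otimes(d-1)}$ and the base case on $\V$.

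The hard part is the algebraic identity underlying the reduction: one needs a precise comparison of $(\varrho\sigma_\io'\otimes \varrho\sigma_\io')(\Theta^\io)$ with $\Theta^\io$ (or its flip) that interacts well with $\varrho$-adjointness. Here the new Lemma~\ref{lem:iTheta2}, $(\sigma_\io\otimes \sigma\tau)(\Theta^\io)=\Theta^\io$, will play the decisive role, used together with the factorization $\Theta^\io=\Delta(\up)\Theta(\up^{-1}\otimes 1)$ and the classical symmetries of the quasi-R-matrix $\Theta$ under $\varrho\otimes\varrho$ from \cite{Lus93}. Carefully bookkeeping the several (anti-)involutions $\varrho,\sigma_\io',\psi,\sigma,\tau$ and their interactions with the quasi K-matrix $\up$ will be the main technical load.
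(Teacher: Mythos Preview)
Your overall strategy --- induction on $d$ via the recursive description $\iba(x\otimes v_b)=\Theta^\io(\iba(x)\otimes v_b)$, together with Lemma~\ref{bisym-lemma2} and Lemma~\ref{lem:iTheta2} --- is exactly the paper's, and your base case is correct. The gap is in the inductive step. Applying Lemma~\ref{bisym-lemma2} uniformly to both tensor factors yields the operator $(\varrho\sigma_\io'\otimes\varrho\sigma_\io')(\Theta^\io)$, and after moving it across by $\varrho$-adjointness you are left with $(\sigma_\io'\otimes\sigma_\io')(\Theta^\io)$. But Lemma~\ref{lem:iTheta2} asserts invariance under $\sigma_\io\otimes\sigma\tau$, which is \emph{asymmetric} in the two factors: on the left one has the $\Ui$-anti-involution $\sigma_\io=\iba\circ\sigma_\io'$, on the right the $\U$-anti-involution $\sigma\tau$. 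Your uniform $\sigma_\io'\otimes\sigma_\io'$ does not match this, and neither the factorization $\Theta^\io=\Delta(\up)\Theta(\up^{-1}\otimes 1)$ nor the classical $(\varrho\otimes\varrho)$-symmetry of $\Theta$ gives a direct bridge.

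The paper closes the induction by treating the two factors differently. On the first factor one does use Lemma~\ref{bisym-lemma2} and $\varrho$-adjointness to obtain $\la\sigma_\io'(a')M_{f'},M_{g'}\ra$, then applies the inductive hypothesis to swap the arguments, and finally invokes the $\imath$-involutive property $\iba(uz)=\iba(u)\iba(z)$ (Proposition~\ref{prop:ioba}) to pull $\iba$ past $\sigma_\io'(a')$; this produces $\iba\sigma_\io'(a')=\sigma_\io(a')$ acting on $\overline{M_{f'}}^\io$. On the second factor, which carries only the Lusztig bar $\psi$ (not $\iba$), one instead verifies directly on $\V$ the type~A identity $(M_{f''},\D(a''\overline{M_{g''}}))=(M_{g''},\D(\sigma\tau(a'')\overline{M_{f''}}))$; this is the identity from \cite{CL16} and is where $\sigma\tau$ enters. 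The two factors then recombine to give $(\sigma_\io\otimes\sigma\tau)(\Theta^\io)$ acting on $\overline{M_{f'}}^\io\otimes\overline{M_{f''}}$, and Lemma~\ref{lem:iTheta2} finishes the proof. The missing idea in your sketch is precisely this asymmetric handling: the first factor needs the inductive hypothesis \emph{and} the $\imath$-involutive compatibility to upgrade $\sigma_\io'$ to $\sigma_\io$, while the second factor needs a separate one-variable identity on $\V$ that produces $\sigma\tau$ rather than $\sigma_\io'$.
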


\begin{proof}
For $d=1$, by definition \eqref{eq:BF} and using the formulas $\iba(v_a)$ in Lemma~\ref{lem:a}, we compute that $\langle v_a, v_{-a} \rangle =1$, for all $a \in \I$; $\langle v_a, v_{a} \rangle =1$, for all $a \in \Iwr$; and otherwise $\langle v_a, v_{b} \rangle =0$.	Therefore, $\langle \cdot, \cdot \rangle$ is symmetric on $\V$. 

We proceed by induction on $d$. Given $f, g \in \Ibw^d$, write $f'=(f(1),\cdots ,f(d-1)),\  f''=(f(d))$ and similarly for $g', g''$. Hence $M_g =M_{g'}\otimes M_{g''}$. We use $\overline{\phantom{x}}$ to denote $\psi$ and $\overline{\phantom{x}}^\io$ to denote $\iba$ below. The bar map $\iba$ on a tensor product $\U$-module such as $\V^{\otimes d}$ can be defined inductively via $\Theta^\io$ as (cf. \cite[(3.17), Remark 3.14]{BW18a}) 
\begin{align}
  \label{eq:ibarT}
\iba (M_g) = \Theta^\imath(\overline {M_{g'}}^\io\otimes \overline{M_{g''}}). 
\end{align}
Denote $\Theta^\io=\sum a'\otimes a''$ with $a' \in \Ui, a''\in \U$. Then we have 
\begin{align}  
   \label{eq:MMa}
\la M_f,M_g\ra
=&\left(M_{f'}\otimes M_{f''},\D (\Theta^\imath(\overline {M_{g'}}^\io\otimes \overline{M_{g''}}))\right)\\
=&\sum \left(M_{f'},\D(a'\overline{M_{g'}}^\io)\right)\left(M_{f''},\D(a''\overline{M_{g''}})\right).
\notag 
\end{align} 

By Lemma \ref{bisym-lemma2} and the adjunction formula \eqref{bisym-equa}, we have
\begin{align*}
\left(M_{f'},\D(a'\overline{M_{g'}}^\io)\right)
& =\left(M_{f'},\varrho \sigma_\io' (a')\D(\overline{M_{g'}}^\io)\right)
\\
& = \left( \sigma_\imath' (a')M_{f'},\D(\overline{M_{g'}}^\io)\right)
\\
&= \la \sigma_\imath' (a')M_{f'},M_{g'}\ra,
\end{align*}
which, thanks to the symmetry of $\la \cdot, \cdot \ra$ on $\V^{\otimes d-1}$ by the inductive assumption and Proposition~ \ref{prop:ioba}, is equal to
\begin{align}
  \label{eq:MMa1}
\left(M_{f'},\D(a'\overline{M_{g'}}^\io)\right)
& =\la M_{g'}, \sigma_\imath' (a')M_{f'}\ra
= \big( M_{g'}, \D \circ \iba \sigma_\imath' (a') ( \overline{M_{f'}}^\io) \big).
\end{align}
Similarly, we have
\begin{align}
  \label{eq:MMa2}
\left(M_{f''},\D(a''\overline{M_{g''}})\right)
= \big( M_{g''}, \D \circ \sigma\tau (a'') ( \overline{M_{f''}}) \big).
\end{align}
The formula \eqref{eq:MMa2} on $\V$ can be verified directly by definitions for $a''$ being generators of $\U$. (Such a formula is valid in general on $\V^{\otimes d}$; cf. \cite[Proposition 3.3]{CL16} and its proof.) 

Plugging \eqref{eq:MMa1}--\eqref{eq:MMa2} into \eqref{eq:MMa}, we obtain
\begin{align*}
\la M_f,M_g\ra
&= \sum \big( M_{g'}, \D \circ \iba \sigma_\imath' (a') (\overline{M_{f'}}^\io) \big) 
\big( M_{g''}, \D \circ \sigma\tau (a'') ( \overline{M_{f''}}) \big)
\\
&= \Big(M_{g'}\otimes M_{g''}, \D \sum (\iba \sigma_\imath' (a')\otimes \sigma\tau (a'')) (\overline{M_{f'}}^\io \otimes \overline{M_{f''}}) \Big)
\\
&= \Big(M_{g}, \D  (\sigma_\imath \otimes \sigma\tau) (\Theta^\imath) (\overline{M_{f'}}^\io \otimes \overline{M_{f''}}) \Big),
\end{align*} 
which, by Lemma~\ref{lem:iTheta2} and \eqref{eq:ibarT}, can be rewritten as 
\begin{align*}
\la M_f,M_g\ra
&= \big(M_{g}, \D  \Theta^\imath (\overline{M_{f'}}^\io \otimes \overline{M_{f''}}) \big)
\\
&= \big(M_{g}, \D \iba (M_{f'}\otimes M_{f''})   \big)
=\la M_g, M_f \ra.
\end{align*} 
This completes the proof of the theorem. 
\end{proof}

\subsection{An inversion formula}

By Proposition~\ref{prop:iCB} (also see Theorem~\ref{thm:iCBsame}), we can write 
\begin{equation}
 \label{KLP}
C_g=\sum_{y\in \Ibw^d}l_{y,g}(q)M_y, 
\end{equation}
for $l_{y,g}(q) \in \Z[q^{-1}]$; these polynomials $l_{y,g}(q)$ are called {\em (quasi-parabolic) KL polynomials}. Note $l_{g,g}=1$, and $l_{y,g}=0$ unless $y \preceq_\imath g$. 

Similarly, we have 
\begin{equation}
 \label{KLP}
C^*_{g} = \sum_{y\in \Ibw^d} l^*_{y,g}(q)M_{y},
\end{equation}
for $l^*_{y,g}(q) \in \Z[q]$; these polynomials $l^*_{y,g}$ are called {\em (quasi-parabolic) dual KL polynomials}. 
 Note $l^*_{g,g}=1$, and $l^*_{y,g}=0$ unless $y \preceq g$.

\begin{theorem}
 \label{thm:inv}
We have 
$\la C_{g},C^*_{-h}\ra=\delta_{g,h}$, for $g,h\in f\cdot W_d.$
\end{theorem}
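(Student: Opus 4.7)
The plan is to evaluate $\la C_g, C^*_{-h}\ra$ directly from \eqref{eq:BF}, use the symmetry of the bilinear form to show the resulting scalar is bar-invariant, and conclude by a short support analysis and a squeeze argument.

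\textbf{Step 1 (expansion).} Since $\iba(C^*_{-h}) = C^*_{-h}$ by Proposition~\ref{prop:iCB}, definition \eqref{eq:BF} together with \eqref{eq:D} gives $\la C_g, C^*_{-h}\ra = (C_g, \D(C^*_{-h}))$. Writing $C_g = \sum_z l_{z,g}(q) M_z$ and $C^*_{-h} = \sum_y l^*_{y,-h}(q) M_y$, and using anti-linearity of $\D$ with $\D(M_y) = M_{-y}$ together with $(M_a, M_b) = \delta_{a,b}$, I obtain
\[
\la C_g, C^*_{-h}\ra \;=\; \sum_{z} l_{z,g}(q)\,\overline{l^*_{-z,-h}(q)}.
\]

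\textbf{Step 2 (bar-invariance via symmetry).} Applying the identical manipulation to $\la C^*_{-h}, C_g\ra = (C^*_{-h}, \D(C_g))$ produces $\sum_z l^*_{-z,-h}(q)\,\overline{l_{z,g}(q)}$, which is precisely the bar of the expression in Step~1. Combined with the symmetry $\la C_g, C^*_{-h}\ra = \la C^*_{-h}, C_g\ra$ from Theorem~\ref{thm:bisym}, this shows $\la C_g, C^*_{-h}\ra$ is a bar-invariant element of $\Q(q)$.

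\textbf{Step 3 (order-reversal and support analysis).} A short computation using \eqref{eq:order} and the action of the diagram involution $\tau$ on $X$ (which descends to an involution of $X_\io$ and stabilizes $\N[I]\cap\N[w_\bu I]$ up to sign) yields the key order-reversal: $-z \preceq_\io -h \iff h \preceq_\io z$. By Proposition~\ref{prop:iCB}, $l_{z,g} = 0$ unless $z\preceq_\io g$, with $l_{g,g} = 1$ and $l_{z,g}\in q^{-1}\Z[q^{-1}]$ otherwise; likewise $l^*_{-z,-h} = 0$ unless $h\preceq_\io z$, with the $z=h$ value equal to $1$ and all others in $q\Z[q]$. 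The sum in Step~1 is therefore supported on $\{z : h\preceq_\io z\preceq_\io g\}$. If $g=h$ this set is $\{g\}$ and the sum equals $l_{g,g}\overline{l^*_{-g,-g}} = 1$. If $g\ne h$, every nonzero term is either a product of two elements of $q^{-1}\Z[q^{-1}]$ or a $1$ paired with an element of $q^{-1}\Z[q^{-1}]$; hence the whole sum lies in $q^{-1}\Z[q^{-1}]$, and any bar-invariant element of $q^{-1}\Z[q^{-1}]$ must vanish. In either case $\la C_g, C^*_{-h}\ra = \delta_{g,h}$.

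The main obstacle will be Step~3, specifically the honest verification that negation reverses $\preceq_\io$ via the descent of $\tau$ to $X_\io$, and the careful triangular bookkeeping to place every term into $q^{-1}\Z[q^{-1}]$. Once these are in place, Steps~1, 2 and the bar-invariance squeeze are formal.
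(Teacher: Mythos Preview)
Your proof is correct and follows essentially the same strategy as the paper's: expand via the definition of $\langle\cdot,\cdot\rangle$, use the symmetry from Theorem~\ref{thm:bisym}, and squeeze. The one difference is that your Step~3 takes an unnecessary detour through the order-reversal $-z\preceq_\io -h \Leftrightarrow h\preceq_\io z$; the paper avoids this entirely by observing that $l_{z,g}(q)\in \delta_{z,g}+q^{-1}\Z[q^{-1}]$ and $\overline{l^*_{-z,-h}(q)}=l^*_{-z,-h}(q^{-1})\in \delta_{z,h}+q^{-1}\Z[q^{-1}]$ already force
\[
\sum_{z} l_{z,g}(q)\,l^*_{-z,-h}(q^{-1}) \;\equiv\; \delta_{g,h} \pmod{q^{-1}\Z[q^{-1}]},
\]
with no support analysis needed (the constant term of each product is $\delta_{z,g}\delta_{z,h}$). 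Combined with the bar-invariance from your Step~2 (equivalently, the symmetric congruence $\pmod{q\Z[q]}$), this gives the result directly. So what you flagged as the ``main obstacle'' can simply be dropped.
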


\begin{proof}
Since $C_{-h}^*$ is $\iba$-invariant, by \eqref{KLP} we have  
\begin{align}
  \label{eq:gh}
C_g=\sum_{y\in \Ibw^d}l_{y,g}(q)M_y,\quad  
C^*_{-h} = \sum_{-y\in \Ibw^d} l^*_{-y,-h}(q^{-1})\iba(M_{y}).
\end{align}
Similarly, since $C_g$ is $\iba$-invariant, we have
\begin{align}
  \label{eq:hg}
C_{-h}^*=\sum_{y\in \Ibw^d}l^*_{-y,-h}(q)M_{-y},\quad  C_{g} = \sum_{y\in \Ibw^d} l_{y,g}(q^{-1})\iba(M^*_{-y}).
\end{align}
By definition of $\la\cdot,\cdot \ra$ we have 
\[
\la M_y, \iba(M_{-y'}) \ra 
 = (M_y, M_{y'}) =\delta_{y,y'}. 
\]
Therefore, by \eqref{eq:gh} and \eqref{eq:hg} we obtain
\begin{align*}
\la C_g,C_{-h}^*\ra =\sum_y l_{y,g}(q)l^*_{-y,-h}(q^{-1}) &\equiv \delta_{g,h} \pmod{q^{-1}\Z[q^{-1}]}, \\
\la C_{-h}^*,C_g\ra =\sum_y l^*_{-y,-h}(q)l_{y,g}(q^{-1}) &\equiv \delta_{g,h} \pmod{q\Z[q]}.
\end{align*}
By Theorem~ \ref{thm:bisym}, $\la C_g,C_{-h}^*\ra=\la C_{-h}^*,C_g\ra$, and so the above two congruence identities imply that $\la C_g,C_{-h}^*\ra=\delta_{g,h}$.
\end{proof}

We obtain the following inversion formula for quasi-parabolic KL polynomials as a reformulation of Theorem~\ref{thm:inv}; this generalizes \cite{KL79, Do90}. 
\begin{corollary}
  \label{cor:inv}
For all $g,h\in \Ibw^d$, we have
\begin{equation*}
 \sum_{y\in \Ibw^d}l_{y,g}(q)l^*_{-y,-h}(q^{-1})=\delta_{g,h}.
\end{equation*}
\end{corollary}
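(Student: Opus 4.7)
The plan is to prove Corollary~\ref{cor:inv} as a direct algebraic reformulation of Theorem~\ref{thm:inv}. I would expand both $C_g$ and $C^*_{-h}$ in the standard basis $\{M_y\}$ and compute the pairing $\langle C_g, C^*_{-h}\rangle$ term by term, then invoke Theorem~\ref{thm:inv} to identify the resulting sum with $\delta_{g,h}$.

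First I would write $C_g = \sum_y l_{y,g}(q)\,M_y$ directly from the definition of the KL polynomials. For $C^*_{-h}$, rather than expanding it in $\{M_y\}$ directly, I would exploit its bar-invariance: reindexing $y \mapsto -y$ in $C^*_{-h} = \sum_y l^*_{y,-h}(q)\,M_y$ gives $C^*_{-h} = \sum_y l^*_{-y,-h}(q)\,M_{-y}$, and then applying the anti-linear $\iba$ to both sides yields
\[
C^*_{-h} = \sum_{y\in \Ibw^d} l^*_{-y,-h}(q^{-1})\,\iba(M_{-y}).
\]

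Next I would establish the key orthogonality $\langle M_y, \iba(M_{-y'})\rangle = \delta_{y,y'}$. Unwinding definition \eqref{eq:BF} of $\langle\cdot,\cdot\rangle$, using $\iba^2 = \mathrm{Id}$, the definition \eqref{eq:D} of $\D$, and the standard orthogonality $(M_f, M_g) = \delta_{f,g}$, one gets
\[
\langle M_y, \iba(M_{-y'})\rangle = (M_y,\, \D\iba\iba(M_{-y'})) = (M_y, \D(M_{-y'})) = (M_y, M_{y'}) = \delta_{y,y'}.
\]
Plugging the two expansions into the bilinear form and collapsing the double sum via this orthogonality yields $\langle C_g, C^*_{-h}\rangle = \sum_{y} l_{y,g}(q)\,l^*_{-y,-h}(q^{-1})$, which equals $\delta_{g,h}$ by Theorem~\ref{thm:inv}.

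Since the substantive inputs — symmetry of $\langle\cdot,\cdot\rangle$ (Theorem~\ref{thm:bisym}), compatibility of the bar involutions (Proposition~\ref{prop:ioba}), and identification of (dual) quasi-parabolic KL bases with (dual) $\imath$canonical bases (Theorem~\ref{thm:iCBsame}) — have already been absorbed into Theorem~\ref{thm:inv}, the corollary is essentially a bookkeeping exercise. The only subtle point, and the step that required some care to get right, is remembering to use the bar-invariance of $C^*_{-h}$ before expanding (rather than expanding it directly in $\{M_y\}$), so that the pairing matches $M_y$ against $\iba(M_{-y'})$ and the orthogonality cleanly produces a Kronecker delta in $y, y'$.
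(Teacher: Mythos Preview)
Your proposal is correct and mirrors the paper's approach exactly: the identity $\langle C_g, C^*_{-h}\rangle = \sum_y l_{y,g}(q)\,l^*_{-y,-h}(q^{-1})$ is precisely the computation carried out inside the paper's proof of Theorem~\ref{thm:inv} (see the expansions \eqref{eq:gh} and the orthogonality $\langle M_y,\iba(M_{-y'})\rangle=\delta_{y,y'}$ there), and the paper states the corollary as a direct reformulation without further argument. Your handling of the bar-invariance of $C^*_{-h}$ and the orthogonality step is exactly what the paper does.
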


\begin{remark}
\label{rem:superKL}
The bilinear form $\la \cdot, \cdot \ra$ defined by \eqref{eq:BF} still makes sense for a $\U$-module $\V^{\otimes m} \otimes \V^{* \otimes n}$ as studied in \cite{BW18a}.  
Theorem~\ref{thm:bisym} and a version of Corollary~\ref{cor:inv} remain valid in such a  generality, and it provides an inversion formula for the super Kazhdan-Lusztig polynomials of $\mathfrak{osp}$ type {\em loc. cit.} This generalizes the results in super type A in \cite{CL16}. 
\end{remark}


\end{document}